\def\R{{\mathbb{R}}}
\def\Z{{\mathbb{Z}}}
\def\P{{\mathbb{P}}}
\def\E{{\mathbb{E}}}
\def\blspace{{\mathcal{C}}} 
\def\bbm{{\mu^{\mathrm{bb}}}} 
\def\blm{{\mu^{\mathrm{bl}}}} 
\def\bls{{\mathcal B}} 
\def\rwlspace{{\mathcal L}} 
\def\rwlm{{\mu^{\mathrm{rwl}}}} 
\def\rwls{{\mathcal R}} 
\def\blrange{{\alpha}} 
\def\loe{{\mathrm{LE}}} 
\def\lew{{\mathrm{LEW}}} 
\def\koz{{\mathrm{K}}} 
\def\bm{{\mathrm{BM}}} 
\def\ls{{\mathrm{LS}}} 
\def\bs{{\mathrm{BS}}}
\def\hd{{\mathrm{dim}_{\mathrm{H}}}} 
\def\ballR{{\mathbb D}}
\def\ql{{\mathrm{QL}}} 
\def\es{{\mathrm{Es}}} 
\newtheorem{theorem}{Theorem}[section]
\newtheorem{lemma}[theorem]{Lemma}
\newtheorem{proposition}[theorem]{Proposition}
\newtheorem*{proposition*}{Proposition}
\newtheoremstyle{likedef}
  {}%
  {}%
  {}%
  {}
  {\bfseries}%
  {.}%
  {.5em}%
  {}%
\theoremstyle{likedef}
\newtheorem{remark}[theorem]{Remark}
\newtheorem{claim}[theorem]{Claim}
\newtheorem{conjecture}[theorem]{Conjecture}
\numberwithin{equation}{section}
\begin{document}
\title{On Brownian motion, simple paths, and loops}

\author{
Artem Sapozhnikov\thanks{
University of Leipzig, Department of Mathematics, 
Augustusplatz 10, 04109 Leipzig, Germany.
email: artem.sapozhnikov@math.uni-leipzig.de}
\and Daisuke Shiraishi\thanks{Kyoto University, Department of Mathematics, Kyoto, Japan 
and Forschungsinstitut f\"ur Mathematik, ETH Z\"urich, R\"amistrasse 101, 8092 Z\"urich, Switzerland.
e-mail: daisuke@math.kyoto-u.ac.jp}
}

\maketitle

\begin{abstract}
We provide a decomposition of the trace of the Brownian motion  
into a simple path and an independent Brownian soup of loops that intersect the simple path. 
More precisely, we prove that any subsequential scaling limit of the loop erased random walk is a simple path (a new
result in three dimensions), which can be taken as the simple path of the decomposition.
In three dimensions, we also prove that the Hausdorff dimension of any such subsequential scaling limit lies in $(1,\frac53]$. 
We conjecture that our decomposition characterizes uniquely the law of the simple path. 
If so, our results would give a new strategy to the existence of the scaling limit of the loop erased random walk 
and its rotational invariance. 
\end{abstract}

\section{Introduction}

How does the Brownian motion in $\R^d$ look like? 
This question has fascinated probabilists and mathematical physicists for a long time, and it continues to be an unending source of challenging problems.
Not too long after the existence of the Brownian motion was rigorously shown by Wiener in 1923, 
L\'evy \cite{Lev} proved that a two dimensional Brownian motion intersects itself almost surely,  
Kakutani \cite{Kak} showed that a $d$-dimensional Brownian motion is almost surely a simple path when $d \ge 5$, 
Dvoretzky, Erd\H{o}s and Kakutani \cite{DEK} verified that a Brownian motion intersects itself in three but not in four dimensions almost surely.
Much later, Taylor and Fristedt \cite{Tay,Fri} found that the Hausdorff dimension of the set of double points of the Brownian motion is two in two dimensions and one in three dimensions. 

In this paper, we are interested in the nature of self-intersections, more specifically, how loops formed by the Brownian motion are distributed in space. 
Consequently, from our point of view, we may focus on the case of two and three dimensions.  
We give an explicit representation of such loops by establishing a decomposition of the Brownian path into independent simple path and a set of loops.
In order to explain it, let us begin with a similar problem for a simple random walk.

Consider a simple random walk (SRW) on the rescaled lattice $\frac1n\Z^d$ started at the origin and stopped upon exiting from the unit ball. 
Its loop erasure, the {\it loop erased random walk} (LERW), is a simple path connecting the origin with the complement of the ball obtained from the random walk path 
by chronologically erasing all its loops. 
Remarkably, the law of these loops is very explicit. 
They come from a Poisson point process of discrete loops (a {\it random walk loop soup}) on $\frac1n\Z^d$ independent from the loop erasure \cite[Section~9]{LL10}. 
More precisely, if we denote the loop erased random walk by $\lew_n$ and an independent random walk loop soup in the unit ball by $\ls_n$, 
the exact definitions will come later (see Sections~\ref{sec:lerw} and \ref{sec:rwls}), then 
\begin{equation}\label{eq:RWdecomposition:intro}
\begin{array}{c}
\text{the union of $\lew_n$ and the loops from $\ls_n$ intersecting $\lew_n$ has the same law as}\\
\text{the trace of a SRW on $\frac1n\Z^d$ started at $0$ and stopped upon exiting from the unit ball.}
\end{array}
\end{equation}

As the Brownian motion is the scaling limit of a simple random walk, 
it is natural to start by looking for an analogue of the random walk path decomposition in the continuous setting. 
However, unlike the random walk, the Brownian motion has a dense set of loops and it is not clear how to remove them in chronological order. 
Zhan proved in \cite{Zhan} the existence of a loop erasure of planar Brownian motion, but the uniqueness is missing, and 
three dimensions is for the time being out of reach. 
Nevertheless, we are able to get an analogue of \eqref{eq:RWdecomposition:intro} for the Brownian motion by passing suitably to the large-$n$ limit 
on the both sides of the decomposition \eqref{eq:RWdecomposition:intro}. 
First of all, after interpolating linearly, we may view all the lattice paths and loops as continuous curves and loops of $\R^d$, 
more usefully, as elements in the metric space of all compact subsets of the closed unit ball with the Hausdorff metric, denote it by $(\mathcal K_D, d_H)$. 
Let $\koz$ be any weak subsequential limit of $\lew_n$ in $(\mathcal K_D, d_H)$, and $\bs$ the limit of $\ls_n$, which turns out to be 
the Brownian loop soup of Lawler and Werner \cite{LW04}. 
\begin{theorem}\label{thm:K+BLS=BM}
In $2$ and $3$ dimensions, the union of $\koz$ and all the loops from an independent $\bs$ that intersect $\koz$ has the same law 
in $(\mathcal K_D, d_H)$ as the trace of the Brownian motion stopped on exiting from the unit ball. 
\end{theorem}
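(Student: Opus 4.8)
The strategy is to transfer the discrete identity \eqref{eq:RWdecomposition:intro} to the limit by a joint weak convergence argument. Fix a subsequence along which $\lew_n \Rightarrow \koz$ in $(\mathcal K_D, d_H)$; by tightness of $\ls_n$ (and the known convergence of the random walk loop soup to the Brownian loop soup $\bs$ of Lawler–Werner \cite{LW04}) we may, passing to a further subsequence, assume joint convergence of the triple $(\lew_n, \ls_n, \text{SRW}_n)$ on a common probability space, with $\lew_n$ and $\ls_n$ independent at each finite $n$, so that the limit $(\koz, \bs, \bm)$ inherits independence of $\koz$ and $\bs$ and has $\bm$ distributed as the Brownian trace. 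On this space, \eqref{eq:RWdecomposition:intro} says that for every $n$, $\mathrm{SRW}_n = \lew_n \cup \bigcup\{\gamma \in \ls_n : \gamma \cap \lew_n \neq \emptyset\}$ as compact sets. The goal is to pass this identity to the limit.

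The two inclusions are handled separately. For ``$\supseteq$'' (the limiting right-hand side is contained in $\bm$): any point of $\koz$ is a limit of points of $\lew_n \subseteq \mathrm{SRW}_n$, and any loop $\gamma \in \bs$ meeting $\koz$ is, by the Skorokhod coupling for the loop soup, a limit of loops $\gamma_n \in \ls_n$; one shows these $\gamma_n$ can be taken to meet $\lew_n$ for large $n$ (using that $\gamma$ genuinely crosses $\koz$, not just touches it — here a crossing/thickness argument is needed) so that $\gamma_n \subseteq \mathrm{SRW}_n$, whence $\gamma \subseteq \bm$ in the limit. For ``$\subseteq$'' (every point of $\bm$ lies in the limiting right-hand side): a point $x \in \bm$ is a limit of points $x_n \in \mathrm{SRW}_n$; by \eqref{eq:RWdecomposition:intro} each $x_n$ lies on $\lew_n$ or on some loop $\gamma_n \in \ls_n$ hitting $\lew_n$. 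In the first case $x \in \koz$. In the second case one must control the diameter of the relevant loop $\gamma_n$: if $\mathrm{diam}(\gamma_n) \to 0$ then $x$ is a limit of points of $\lew_n$ (since $\gamma_n$ meets $\lew_n$) so $x \in \koz$; if $\mathrm{diam}(\gamma_n)$ stays bounded below along a subsequence, then $\gamma_n$ converges (after passing to a subsequence) to a nondegenerate loop $\gamma \in \bs$ that meets $\koz$ and contains $x$. Summing these cases gives $x$ in the limiting right-hand side.

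The main obstacle is the interchange of ``meets'' with weak limits: Hausdorff convergence $\gamma_n \to \gamma$ and $A_n \to A$ does \emph{not} by itself imply that $\gamma_n \cap A_n \neq \emptyset$ for large $n$, nor conversely, because intersection is not continuous for $d_H$. The resolution is to use the fact — now available from Theorem~\ref{thm:K+BLS=BM}'s companion result that $\koz$ is a simple path of dimension in $(1,\tfrac53]$, together with second-moment / Beurling-type estimates on the probability that a random walk or Brownian loop of a given size comes within $\varepsilon$ of $\lew_n$ without actually hitting it — to show that ``near-misses'' have vanishing probability. Concretely one needs: (i) a quantitative separation estimate, that with high probability no loop in $\ls_n$ of macroscopic size passes within distance $\delta$ of $\lew_n$ without intersecting it (a consequence of the uniform transversality/non-tangency of the loop soup with the loop-erased walk, proved via the one-point function estimates for LERW and the soup intensity measure); and (ii) an a priori bound ruling out the pathology that a positive-probability portion of $\bm$ is covered only by loops whose discrete approximants are ``just barely'' attached to $\lew_n$. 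Both are obtained by the same second-moment technology used to prove tightness of $\lew_n$ and finiteness of its Hausdorff dimension, so no genuinely new estimate is required beyond a careful packaging. Once the intersection operation is shown to commute with the limit on an event of full probability, the two inclusions combine to give the claimed distributional identity in $(\mathcal K_D, d_H)$.
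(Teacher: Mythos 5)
Your high-level plan matches the paper's: start from the discrete identity \eqref{eq:RWdecomposition:intro}, couple $\lew_n$ to $\koz$ by Skorokhod, couple the discrete and Brownian loop soups strongly, and then recognize that the crux is the discontinuity of ``intersection'' under Hausdorff convergence, to be cured by a Beurling-type near-miss estimate. That diagnosis is exactly correct, and the key new ingredient you identify (a Beurling-type hittability statement in three dimensions) is indeed the paper's main novelty. However, there are two substantive deviations worth flagging.

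First, your endgame is a direct identification of compact sets on a common probability space: you want the triple $(\lew_n,\ls_n,\mathrm{SRW}_n)$ to converge jointly with $\mathrm{SRW}_n$ realized as the union, and then pass the set identity to the limit. But $\mathrm{SRW}_n$ is a \emph{deterministic function} of $(\lew_n,\ls_n)$, so insisting that the triple converge jointly is the same as insisting that the map $E(\cdot,\cdot)$ is continuous under $d_H$ along the sequence --- which is precisely the fact you are trying to prove. The paper avoids this circularity by instead proving a sandwich inclusion of events (equation \eqref{eq:lew:K:inclusion}) and comparing $\P[X\subset U\cap\overline D]$ with $\P[\bm\subset U\cap\overline D]$ for open $U$ with smooth boundary, letting $n\to\infty$, $\varepsilon\to 0$, $\delta\to 0$ in the right order. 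No joint realization of $\bm$ and $X$ is required; the identity is obtained in law, which is all the theorem claims. Your approach can probably be salvaged, but it would require more care than you give it.

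Second, and more importantly, you write that the Beurling/near-miss control ``is now available from Theorem~\ref{thm:K+BLS=BM}'s companion result that $\koz$ is a simple path of dimension in $(1,\frac53]$, together with second-moment estimates.'' This is not how the logic runs, and as stated it is circular: the paper proves Theorem~\ref{thm:K+BLS=BM} using the Beurling-type estimate for the \emph{discrete} LERW (Theorem~\ref{thm:beurling}), not a property of the continuum limit $\koz$; Theorem~\ref{thm:simplepath} (simple path) is proved separately and is not an input here. The hittability statement you need is a discrete, $n$-uniform estimate for $\loe(R^1[0,T^1_{0,n}])$ obtained via Kozma-style annulus lemmas, not from Hausdorff-dimension or second-moment considerations about $\koz$. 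Without stating and proving such a discrete estimate (the analogue of \cite[Lemma~4.6]{K} in non-centered annuli), both of your Propositions labeled (i) and (ii) would remain unjustified, and the near-miss control --- which you correctly identify as the real content of the theorem --- is not established.

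A smaller point: the strong loop-soup coupling is due to Lawler--Trujillo Ferreras \cite{LT04} in two dimensions and is extended to $d\geq 3$ in this paper (Theorem~\ref{thm:coupling:soups}); \cite{LW04} is the construction of the Brownian loop soup itself. Also, for the $\supseteq$ direction you invoke a ``crossing/thickness'' argument so the Brownian loop truly intersects $\koz$ rather than touching it; note that (as the paper's discussion points out) this direction is actually the easy one --- if $\gamma_n\cap\lew_n\neq\emptyset$ and both converge in $d_H$, a compactness argument already gives $\gamma\cap\koz\neq\emptyset$. The hard direction is the one where $\gamma\cap\koz\neq\emptyset$ must imply $\gamma_n\cap\lew_n\neq\emptyset$, and there only hittability of the discrete path saves the day.
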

A related result has been proved in \cite{LSW2} for the ``filling'' of the planar Brownian path, 
where the filling of a closed set $A$ in $\R^2$ is the union of $A$ with all the bounded connected components of $\R^2 \setminus A$. 
It is shown there that the filling of the union of $\koz$ and the loops from $\bs$ intersecting $\koz$
has the same law as the filling of the Brownian path.
However, the filling of a random set does not characterize its law. 
For instance, the filling of the $\mathrm{SLE}_{6}$ started at $0$ up to the first exit time from the unit disc has the same law as the filling of the Brownian path 
\cite[Theorem 9.4]{LSW2}, while the law of $\mathrm{SLE}_{6}$ as a random compact subset of the disc is different from that of the Brownian trace. 

\medskip

In two dimensions, the sequence $\lew_n$ converges to the Schramm-Loewner evolution with parameter $2$ ($\mathrm{SLE}_2$) \cite{LSW}, 
a simple path \cite{Sch} with Hausdorff dimension $\frac54$ \cite{B}. 
In particular, Theorem~\ref{thm:K+BLS=BM} immediately gives a decomposition of the planar Brownian path into a simple path and loops. 
Unfortunately, no explicit expression for the scaling limit of the LERW is known or conjectured in three dimensions. 
Kozma \cite{K} proved that the sequence $\lew_{2^n}$ is Cauchy in $(\mathcal K_D, d_H)$, 
which gives the existence of the scaling limit as a random compact subset of the ball, 
and, topologically, this is all that has been known up to now. 
Our next main result shows that in three dimensions $\koz$ is a simple path. 
\begin{theorem}\label{thm:simplepath}
Let $\gamma^s$ and $\gamma^e$ be the end points of a simple path $\gamma$, and define 
\[
\Gamma = \left\{\gamma~:~\text{$\gamma$ is a simple path with $\gamma^s = 0$ and $\gamma\cap\partial D = \{\gamma^e\}$}\right\}.
\]
Then, almost surely, $\koz\in\Gamma$.
\end{theorem}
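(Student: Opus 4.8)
The plan is to realize $\koz$ as the image of a random H\"older curve obtained as a subsequential scaling limit of suitably parametrized loop-erased walks, and then to rule out macroscopic self-intersections and premature visits to $\partial D$ by two estimates for the three-dimensional LERW that are uniform in the mesh $1/n$. \textbf{Step 1 (tightness and reduction).} View $\lew_n$ as an increasing parametrized curve $\eta_n\colon[0,1]\to\overline D$ with $\eta_n(0)=0$ and $\eta_n(1)=\gamma^e_n$, the terminal vertex of $\lew_n$, which lies within distance $1/n$ of $\partial D$. The first task is to show that the laws of the $\eta_n$ form a tight family in the space of curves modulo increasing reparametrization; by an Aizenman--Burchard-type argument this reduces to a bound, uniform in $n$ and with suitable decay in $k$, on the probability that $\lew_n$ crosses a fixed annulus $\{z:r<|z-w|<2r\}\subset D$ by $k$ disjoint sub-arcs, and this in turn follows from the known one-point-function estimates and moment bounds for three-dimensional LERW (the same inputs behind the dimension bound $\hd(\koz)\le\frac53$). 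Passing to a subsequence along which simultaneously $\lew_n\Rightarrow\koz$ in $(\mathcal K_D,d_H)$ and $\eta_n\to\eta$ uniformly, one obtains a random H\"older curve $\eta\colon[0,1]\to\overline D$ with $\eta(0)=0$, $\eta(1)\in\partial D$, and $\eta([0,1])=\koz$. Hence the theorem follows once we establish the two almost-sure statements:
\begin{enumerate}
\item[(i)] there are no $0\le s<t\le1$ with $\eta(s)=\eta(t)$ and $\mathrm{diam}\,\eta([s,t])>0$;
\item[(ii)] $\koz\cap\partial D=\{\eta(1)\}$;
\end{enumerate}
indeed, given (i), collapsing the fibers of $\eta$ (which are then closed subintervals of $[0,1]$) exhibits $\koz$ as a simple arc from $0$ to $\eta(1)$, and then (ii) is exactly the assertion $\koz\in\Gamma$.

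\textbf{Step 2 (no quasi-loops; proof of (i)).} For $0<\delta<\epsilon$, say $\lew_n$ has a \emph{$(\delta,\epsilon)$-quasi-loop} if it contains two points within distance $\delta$ of each other whose joining sub-arc along $\lew_n$ has diameter at least $\epsilon$. The key estimate --- and the main obstacle of the whole argument --- is that
\[
\lim_{\delta\downarrow0}\ \sup_{n\ge1}\ \P\bigl[\lew_n\text{ has a }(\delta,\epsilon)\text{-quasi-loop}\bigr]=0\qquad\text{for every }\epsilon>0.
\]
To prove it I would build $\lew_n$ by Wilson's algorithm out of loop-erased simple random walks and estimate, for each of the $O(\delta^{-3})$ mesoscopic boxes $Q$ of side $\delta$, the probability that $\lew_n$ enters $Q$, leaves the $\epsilon$-ball around $Q$, and then returns to $Q$; conditioning on the driving walk up to its first visit to $Q$ and invoking the domain Markov property for the LERW, this should reduce to Green's-function and harmonic-measure bounds for simple random walk together with the one- and two-point function asymptotics for three-dimensional LERW. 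Granting the display, (i) follows by a soft limiting argument: having a $(\delta,\epsilon)$-quasi-loop is an open condition on curves, so by the portmanteau theorem $\P[\eta\text{ has a }(\delta,\epsilon)\text{-quasi-loop}]\le\liminf_n\P[\lew_n\text{ has one}]$, while an exact self-intersection of $\eta$ spanning a sub-arc of diameter $2\rho>0$ is a $(\delta,\rho)$-quasi-loop for \emph{every} $\delta>0$, so sending $\delta\downarrow0$ forces its probability to vanish.

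\textbf{Step 3 (the boundary; proof of (ii)).} Suppose $q\in\koz\cap\partial D$ with $q\ne\eta(1)$. By (i), $\koz=\eta([0,1])$ is an arc from $0$ to $\eta(1)$, and since $0\notin\partial D$ we have $q=\eta(u)$ for some $u\in(0,1)$, so $\eta([u,1])$ has positive diameter. For all large $n$, let $p_n$ be the vertex of $\lew_n$ nearest $\eta_n(u)$; then $p_n\to q$, hence $\mathrm{dist}(p_n,\partial D)\to0$, while the sub-arc of $\lew_n$ from $p_n$ to $\gamma^e_n$ --- which approximates $\eta([u,1])$ since $\eta_n$ is increasing and $\eta_n\to\eta$ uniformly --- has diameter bounded below by a positive constant. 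But by the domain Markov property this terminal sub-arc is the loop-erasure of the simple random walk run from the last visit to $p_n$ until it exits $D$, so its diameter is at most the diameter of that excursion; and the Beurling-type estimate for three-dimensional random walk bounds the probability that an excursion started at distance $\rho$ from $\partial D$ attains a given diameter $c>0$ by a power of $\rho$, which tends to $0$. Hence $\P[\koz\cap\partial D\ne\{\eta(1)\}]=0$, and combining the three steps gives $\koz\in\Gamma$ almost surely with $\gamma^s=0$ and $\gamma^e=\eta(1)$. Everything here is comparatively soft except the quasi-loop estimate of Step~2, which is where the fine structure of three-dimensional LERW --- Wilson's algorithm / the random-walk loop-soup representation, sharp Green's-function bounds, and the LERW one- and two-point functions --- must be used in full, and which I expect to carry the bulk of the work.
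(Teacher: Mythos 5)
Your overall structure parallels the paper's: reduce the theorem to a uniform-in-$n$ quasi-loop estimate for $\lew_n$ plus a separate estimate near $\partial D$, then pass to the limit by a topological argument. Step~3 matches what the paper does in Proposition~\ref{prop:bql} (the relevant estimate is for the \emph{random walk} near $\partial D$, via \cite[Lemma~2.5]{K}, and is soft). Step~1 is a detour the paper avoids: rather than establishing tightness of parametrized curves via an Aizenman--Burchard crossing bound (which is an additional, nontrivial input not obviously available for three-dimensional LERW), the paper works directly in $(\mathcal K_D,d_H)$, introduces the classes $\Gamma(f)$ controlled by a modulus $f$, and shows $\overline{\Gamma(f)}\subset\Gamma$ using Janiszewski's characterization of arcs (Lemma~\ref{l:Gammaf}); the quasi-loop and boundary estimates then say precisely that $\lew_n\in\Gamma(f_m)$ with high probability (Lemma~\ref{l:lewfm}), and the portmanteau theorem finishes the job. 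This bypasses curve-parametrization issues entirely, so you may want to consider whether your Step~1 is really necessary or whether an unparametrized argument in the spirit of Schramm's $\Gamma(f)$ classes would be cleaner.

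The genuine gap is in Step~2, which is where the real work lies. You write that conditioning on the walk's past, invoking the domain Markov property, and then using Green's-function, harmonic-measure, and one-/two-point-function estimates for LERW "should" give the uniform quasi-loop bound. It does not, and this is exactly the obstruction the paper is built around. The domain-Markov step reduces the quasi-loop question to: given the initial LERW segment $L'$ and a random walk started at distance $\delta n$ from some point of $L'$, show the walk hits $L'$ with probability close to $1$ before travelling distance $\epsilon n$. In two dimensions this is the Beurling projection principle and holds for \emph{every} path. In three dimensions it fails for general paths (a line is not hittable), and one-/two-point functions for LERW say nothing about hittability of a fixed LERW sample by an independent walk. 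The paper's central new input is Theorem~\ref{thm:beurling} (and its annulus version Lemma~\ref{l:4.6}): a Beurling-type estimate asserting that \emph{most samples} of the LERW are uniformly hittable. Proving it occupies all of Section~\ref{sec:beurlingtypeestimate} and is where the fine structure of 3D LERW really enters. The paper also explicitly flags why no standard shortcut works: Kozma's quasi-loop bound \cite[Theorem~4]{K} is only for $\varepsilon=n^{-\gamma}$ and his method degenerates as $\varepsilon$ is held fixed while $n\to\infty$, so the uniform statement you display in Step~2 requires a genuinely new argument. Your sketch restates the goal but does not supply the hittability input, so as written the proposal does not close.
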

Theorems~\ref{thm:K+BLS=BM} and \ref{thm:simplepath} give a decomposition in $(\mathcal K_D, d_H)$ of the Brownian path 
into a simple path and loops also in three dimensions. 
For completeness, let us comment briefly on the higher dimensions. 
Our two main results also hold in higher dimensions, but the conclusions are rather trivial. --
In dimensions higher than $3$, the scaling limit of the LERW is a $d$-dimensional Brownian motion \cite[Theorem~7.7.6]{L},
it is itself a simple path, and the Brownian loop soup does not intersect it. 

We believe that the decomposition of the Brownian path into a simple path and loops as in Theorem~\ref{thm:K+BLS=BM} 
is important not only because it sheds light on the nature of self-intersections in the Brownian path, 
but more substatially, a uniqueness of the decomposition is expected, in which case, 
the law of $\koz$ would be uniquely characterized by the decomposition. 
\begin{conjecture}
Let $\koz_1$ and $\koz_2$ be random elements in $(\mathcal K_D, d_H)$ such that $\koz_1,\koz_2\in\Gamma$ almost surely,
and $\bs$ a Brownian loop soup in the unit ball independent from $\koz_1$ and $\koz_2$.  
If for each $i\in\{1,2\}$, the union of $\koz_i$ and all the loops from $\bs$ that intersect $\koz_i$   
has the same law as the trace of the Brownian motion stopped on exiting from the unit ball, 
then $\koz_1$ and $\koz_2$ have the same law in $(\mathcal K_D, d_H)$. 
\end{conjecture}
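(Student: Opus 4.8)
The plan is as follows. Write $M_i$ for the union of $\koz_i$ with the loops of $\bs$ meeting $\koz_i$, so that by hypothesis $M_i$ has the law of the Brownian trace for $i=1,2$. Since the marginal law of $M_i$ is the same for both $i$, it is enough to show that the \emph{conditional} law of $\koz_i$ given $M_i$ is the same as well, in the strong sense that there is a single probability kernel $Q$ from $(\mathcal K_D,d_H)$ to itself with $\mathrm{Law}(\koz_i\mid M_i)=Q(M_i,\cdot)$ almost surely for $i=1,2$: then $(\koz_1,M_1)$ and $(\koz_2,M_2)$ are equal in law, hence so are $\koz_1$ and $\koz_2$. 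Thus everything comes down to producing a \emph{universal reconstruction} of the simple path from the loop-decorated trace --- a rule that, applied to the trace alone (together with auxiliary randomness independent of it), returns the correct conditional law of the backbone.

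We would try to build such a kernel as a scaling limit of the exact discrete loop erasure. By \eqref{eq:RWdecomposition:intro}, on $\frac1n\Z^d$ the loop erasure $\lew_n$ is a \emph{deterministic} functional of the time-ordered walk, while the object distributed as the walk's trace is $\lew_n$ together with the loops of $\ls_n$ that meet it. Given a realization of $M_i$, one would first couple it with a simple random walk on $\frac1n\Z^d$ whose trace approximates $M_i$ up to an error tending to $0$ as $n\to\infty$; the delicate point is to do this so that the macroscopic \emph{time-order} of the approximating walk is determined by $M_i$ up to negligible randomness, for which one would exploit the hierarchical structure of $M_i$ --- its cut points, and the loops and sub-excursions hanging off the piece of backbone between consecutive ones. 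One then erases the loops of this walk and passes to the $n\to\infty$ limit, using Kozma's theorem \cite{K} for its existence and Theorem~\ref{thm:simplepath} to identify the limit as an element of $\Gamma$, while \eqref{eq:RWdecomposition:intro} applied along the approximating walks forces the limit to have, conditionally on $M_i$, the law of $\koz_i$. Because this construction only ever consults $M_i$ and fresh independent randomness, its output is a kernel in $M_i$ alone, as required.

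A more intrinsic variant tries to bypass the lattice. One shows that every cut point of the Brownian trace lies on $\koz$ --- a loop of $\bs$ cannot disconnect the trace, so removing a cut point of the trace must disconnect $\koz$ itself --- and, moreover, is a cut point of $\koz$, so that the cut points inherit the order of $\koz$. Between two cut points the corresponding piece of $\koz$ is, by the domain Markov property of the Brownian motion and the restriction property of $\bs$, a copy of the same picture inside a smaller region, which yields a fixed-point equation for $\mathrm{Law}(\koz\mid\bm)$ as the scales refine; it would then remain to prove uniqueness of the fixed point, e.g. by a contraction estimate for the Wasserstein distance between two putative solutions as one descends the hierarchy of cut points.

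In both approaches the essential difficulty is the same, and is the reason the statement is only a conjecture: \emph{loop erasure is not a continuous operation}, so in the first route one must control the limit of the loop-erased walks along the particular, non-free coupling constructed above, and in the second route one must show that the recursion across cut points genuinely closes up into a measurable function of the trace rather than leaving behind an indeterminate amount of randomness. Both amount to quantifying how much of the time-ordering of the Brownian path is retained by its trace together with the loop soup --- exactly the information whose insufficiency leaves open even the two-dimensional uniqueness of the loop erasure of Brownian motion \cite{Zhan}, and all the more so in three dimensions.
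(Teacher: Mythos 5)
This statement is a \emph{conjecture}, and the paper offers no proof of it --- indeed, the authors explicitly state that it is open even in two dimensions. So there is no ``paper's own proof'' against which your attempt can be scored; the only correct answer to ``prove this statement'' is to explain why it is open, and you do in fact end by doing exactly that. Your submission is therefore not a proof, and you are honest about this, so let me instead comment on the substance of the two strategies you sketch.

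Your reduction of the conjecture to the existence of a single kernel $Q$ with $\mathrm{Law}(\koz_i\mid M_i)=Q(M_i,\cdot)$ for both $i$ is the natural reformulation, and both of your routes aim at producing such a $Q$. In the first (lattice) route, the gap you flag is exactly the crux, but it is worth being blunter about how severe it is: it is not merely that ``loop erasure is not continuous'', but that the Hausdorff-metric trace simply does not encode a time parametrization at all, and there is no canonical way to choose one. Any coupling of $M_i$ with an approximating lattice walk must invent an ordering, and the output of loop erasure depends on that ordering in a non-negligible way on every scale; showing that the resulting law is nonetheless a function of $M_i$ alone \emph{is} the conjecture, not a step towards it. You also invoke Kozma's theorem and Theorem~\ref{thm:simplepath}, but those concern the unconditioned LERW and an arbitrary subsequential limit $\koz$ --- they do not, as stated, give convergence of the conditional law of the loop erasure given a particular realized trace, which is what the kernel construction would require.

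In the second (intrinsic) route, the observation that every cut point of $\bm[0,\tau]$ lies on $\koz$, and is a cut point of $\koz$, is correct and is already used in the paper (in the lower bound of Theorem~\ref{thm:hd}). Where I would push back is on ``between two cut points the corresponding piece of $\koz$ is \ldots\ a copy of the same picture inside a smaller region.'' The set of cut points is a totally disconnected set of Hausdorff dimension $2-\xi<\frac32$, so there is no notion of consecutive cut points; one works with gaps of a Cantor-like set, and the excursion of $\bm$ across such a gap, together with the loops of $\bs$ inside it, is \emph{conditioned} by the disconnection events on either side --- it is not an independent rescaled copy of the full object. So the ``fixed-point equation'' you describe is not obviously well-posed, and the Wasserstein contraction you would need is not merely unproved but not yet formulated. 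These are the same obstructions, seen from the other side: the conditional law of the backbone given the trace is not visibly a local or self-similar functional of the trace.

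In short, your write-up is a reasonable account of why the conjecture is hard and what a proof might look like, and you correctly refrain from claiming to have proved it; but neither sketch gets past the central difficulty, which is to show that the ordering information destroyed in passing from the Brownian path to its trace-plus-loops is recoverable up to randomness independent of $\koz$.
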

As immediate consequences of the uniqueness and Theorem~\ref{thm:K+BLS=BM}, one would 
get a new strategy to the existence of the scaling limit of the loop erased random walk and its rotational invariance. 
Needless to say, it would provide a description of the LERW scaling limit in three dimensions, which is still missing.
As far as we know, the conjecture has not been proved or disproved even in two dimensions.

\medskip

Any subsequential limit $\koz$ of $\lew_n$ is a simple path, and it is immediate that in three dimensions, $\koz$ has a different law than the Brownian path. 
In two dimensions, the law of $\koz$ is explicit, namely that of the trace of $\mathrm{SLE}_2$. 
Our final main result provides rigorous bounds on the Hausdorff dimension of $\koz$ in three dimensions. 
Let $\xi$ be the non-intersection exponent for $3$ dimensional Brownian motion \cite{L95}, 
and $\beta$ the growth exponent for the $3$ dimensional loop erased random walk \cite{Law99}. 
Both exponents exist by \cite{L95,Shi13} and satisfy the bounds $\xi\in(\frac12,1)$ and $\beta\in(1,\frac53]$, see \cite{L95,Law99}. 
\begin{theorem}\label{thm:hd}
In $3$ dimensions, $2-\xi\leq \hd(\koz)\leq \beta$ almost surely.
In particular, 
\[
1<\hd(\koz)\leq \frac53.
\]
\end{theorem}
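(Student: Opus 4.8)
The plan is to establish the two bounds $\hd(\koz)\le\beta$ and $\hd(\koz)\ge 2-\xi$ separately, both by transferring known estimates for the discrete objects ($\lew_n$ and Brownian motion, respectively) across the scaling limit, and then to combine them with the quoted bounds $\beta\in(1,\frac53]$ and $\xi\in(\frac12,1)$ to obtain the stated numerical conclusion. For the upper bound I would argue as follows. By definition of the growth exponent, the number of steps of $\lew_n$ (equivalently, the number of lattice points on the discrete LERW from $0$ to $\partial D$ on $\frac1n\Z^3$) grows like $n^{\beta+o(1)}$, and more precisely one has moment control: for every $\varepsilon>0$ there is $C=C(\varepsilon)$ with $\E[M_n]\le C n^{\beta+\varepsilon}$, where $M_n$ is that number of points (this follows from \cite{Law99,Shi13}, and one may also need the analogous bound for the number of points of $\lew_n$ inside a sub-ball of radius $r$, which scales like $(rn)^{\beta+o(1)}$). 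A Markov-type bound then shows that for each fixed $\varepsilon$, with probability tending to $1$, $\lew_n$ can be covered by at most $n^{\beta+\varepsilon}$ balls of radius $1/n$; passing to the subsequential limit along which $\lew_n\Rightarrow\koz$ in $(\mathcal K_D,d_H)$, and using that a $d_H$-limit of sets coverable by $N$ balls of radius $\delta$ is coverable by $N$ balls of radius $\delta+\delta'$ for any $\delta'>0$, one concludes that $\koz$ admits, for a suitable sparse sequence of scales $\delta_k\to 0$, covers by $\delta_k^{-(\beta+2\varepsilon)}$ balls of radius $\delta_k$. This gives $\dim_{\mathrm{box}}(\koz)\le\beta+2\varepsilon$ a.s., hence $\hd(\koz)\le\beta$ since $\varepsilon$ is arbitrary and Hausdorff dimension is bounded above by box dimension.

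For the lower bound $\hd(\koz)\ge 2-\xi$ I would use the decomposition from Theorem~\ref{thm:K+BLS=BM}. Write $B$ for the trace of the Brownian motion stopped on exiting $D$, so that in law $B=\koz\cup\bigcup\{\ell\in\bs:\ell\cap\koz\ne\emptyset\}$ with $\bs$ an independent Brownian loop soup. The point is that the part of $B$ lying outside all the loops is controlled by the non-intersection exponent: the set of points of a Brownian path that are not surrounded by (are not in the "bubble" of) independent loops — equivalently, the analogue of the set of "pioneer" or "frontier-type" points carved out after gluing the soup — has Hausdorff dimension $2-\xi$; this is the content of \cite{L95} together with the identification of cut points / the exceptional set via the non-intersection exponent. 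Concretely, conditionally on $\koz$, a point $x\in\koz$ fails to be covered by a loop of $\bs$ with a probability governed by the two-sided non-intersection probability for Brownian paths near $x$, which decays like $r^{\xi+o(1)}$ over dyadic scales $r$; a second-moment argument on $\koz$ (Frostman's lemma: exhibit a measure on $\koz$ with finite $s$-energy for every $s<2-\xi$) shows that the set of such uncovered points of $\koz$ has dimension at least $2-\xi$ with positive probability. But every uncovered point of $\koz$ is also a point of $\koz$, so $\hd(\koz)\ge 2-\xi$ on that event; a $0$--$1$ law (or the tail-triviality inherited from $\lew_n$, together with scale invariance) upgrades "positive probability" to "almost surely".

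Finally, assembling the two bounds with $\xi<1$ and $\beta\le\frac53$ yields $2-\xi>1$ and $\beta\le\frac53$, hence $1<\hd(\koz)\le\frac53$ almost surely, which is the second assertion. I expect the main obstacle to be the lower bound: one must make precise, and prove, the claim that the set of points of $\koz$ not covered by soup loops has the right dimension. This requires (i) a careful conditional first/second moment estimate relating the event "$x$ is uncovered" to the Brownian non-intersection exponent $\xi$ uniformly in $x\in\koz$ and over a range of scales, which in turn uses regularity properties of $\koz$ (e.g. a priori upper bounds on the number of points of $\koz$ in small balls, again from the growth-exponent side) to control the second moment, and (ii) a Frostman/energy computation on the random set $\koz$ rather than on a deterministic set, so the measure and its energy bound must be constructed measurably in $\koz$. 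The upper bound, by contrast, is essentially a soft consequence of the definition of $\beta$ plus lower-semicontinuity of "coverable by $N$ balls of radius $\delta$" under $d_H$-convergence, and should be routine once the requisite moment bound on $M_n$ is cited.
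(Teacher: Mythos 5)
Your overall strategy (upper bound via the growth exponent and covering numbers; lower bound via the Brownian decomposition and the non-intersection exponent) matches the paper, but both halves have gaps, and the lower bound is substantially overcomplicated compared to what the paper does.

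For the upper bound, the step that fails is the transfer of a discrete covering to the continuum. You cover $\lew_n$ by roughly $n^{\beta+\varepsilon}$ balls of radius $1/n$ and then invoke $d_H$-stability of coverings; but that only converts a cover of $\lew_n$ at radius $1/n$ into a cover of $\koz$ at radius $1/n + d_H(\lew_n,\koz)$, and there is no control guaranteeing $d_H(\lew_n,\koz)\lesssim 1/n$. What is actually needed is a bound, uniform in $n$, on the number of balls of a \emph{fixed} radius $\varepsilon$ needed to cover $\lew_n$. The paper obtains this via a hitting estimate (Lemma~\ref{l:ball:proba}): $\P^{1,0}[\loe(R^1[0,T^1_{0,n}])\cap B(x,\varepsilon n)\neq\emptyset]\leq C_\delta(\varepsilon n/|x|)^{1+\alpha-\delta}$ with $\alpha=2-\beta$, proved from the escape probabilities $\es(m,n)$ of \cite{Shi13} and time-reversibility of loop erasure. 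Summing over a grid of mesh $\varepsilon$ gives $\E[N_{1,n}(\varepsilon)]\lesssim\varepsilon^{\alpha-2-\delta}=\varepsilon^{-\beta-\delta}$ uniformly in $n$; a Markov bound and letting $n\to\infty$ (plus a separate boundary estimate from Proposition~\ref{prop:bql}) then yield $\mathcal H^{2-\alpha+\xi}(\koz)=0$ a.s. You gesture at the needed local bound (``number of points of $\lew_n$ inside a sub-ball of radius $r$''), but a bound on the number of \emph{points} in a sub-ball would only give a \emph{lower} bound on the number of balls hit once you divide the total; you need the hitting \emph{probability} for a first-moment upper bound.

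For the lower bound, the paper's argument is essentially a two-line soft observation and does not need Frostman or a $0$--$1$ law. Let $\mathcal S(U)$ be the set of points of $U$ that disconnect $0$ from $\partial D$ in $U$. For the Brownian trace $\bm[0,\tau]$, the cut-point set $\mathrm C$ satisfies $\mathrm C\subseteq\mathcal S(\bm[0,\tau])$ and $\hd(\mathrm C)=2-\xi$ a.s.\ by \cite{L95}. Since $\hd(\mathcal S(\cdot))$ is a measurable functional of a compact set, and by Theorem~\ref{thm:K+BLS=BM} the random compact $X=E(\koz,\bs_D)$ has the same law in $(\mathcal K_D,d_H)$ as $\bm[0,\tau]$, we get $\hd(\mathcal S(X))\geq 2-\xi$ a.s. Finally $\mathcal S(X)\subseteq\koz$, because $\koz$ already connects $0$ to $\partial D$, so any disconnecting point must lie on $\koz$. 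Hence $\hd(\koz)\geq 2-\xi$ a.s., with no need for a conditional second-moment estimate on ``uncovered'' points or a $0$--$1$ law upgrade. Your proposed route (build a Frostman measure on the uncovered subset of $\koz$, conditionally on $\koz$) would require reproving the dimension $2-\xi$ result of \cite{L95} in a harder conditional form, and in any case ``uncovered'' is not quite the relevant notion: you want ``no loop straddles $x$ on both sides of $\koz$'', which is exactly what the paper's $\mathcal S(X)$ encodes for free.

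The final numerical step (combining $\xi<1$, $\beta\le\tfrac53$) is the same in both.
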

The lower bound on $\hd(\koz)$ is an immediate application of Theorem~\ref{thm:K+BLS=BM} 
and a result on the Hausdorff dimension of the set of cut-points of the Brownian path. 
Here, a cut-point of a connected set $F$ that contains $0$ and intersects the boundary of the unit ball 
is any point $x\in F$ such that $0$ and the boundary of the ball are disconnected in $F \setminus \{x \}$. 
The Hausdorff dimension of the set of cut-points of the three-dimensional Brownian path from $0$ until exiting from the unit ball 
is precisely $2- \xi$, cf.~\cite{L95}. To see that it is a lower bound on $\hd(\koz)$, it remains to notice that in every decomposition of a Brownian path into a simple path and loops, all 
its cut-points are on the simple path. 

We expect that $\hd(\koz) = \beta$ almost surely. 
Some steps towards this equality will be made in \cite{Shi:wip}.
A same identity holds in two dimensions, where both the growth exponent and the Hausdorff dimension of $\mathrm{SLE}_2$ are 
known to be $\frac54$, see \cite{Ken,B} and also \cite{Law-2,M09}. 
In three dimensions, the value of $\beta$ is not known or conjectured. 
Numerical experiments suggest that $\beta = 1.62 \pm 0.01$ \cite{GB, Wil2}, 
but the best rigorous bounds are $1<\beta\leq \frac53$ \cite{Law99}.

\subsection{Some words about proofs}

Theorem~\ref{thm:K+BLS=BM} is an analogue of \eqref{eq:RWdecomposition:intro} in continuum. 
To prove it, we start with the decomposition of the random walk path \eqref{eq:RWdecomposition:intro} 
and suitably take scaling limits in both sides of the decomposition. 
By \eqref{eq:RWdecomposition:intro}, the union of the loop erased random walk $\lew_n$ and the loops from an independent random walk loop soup $\ls_n$ that intersect $\lew_n$ 
is the trace of simple random walk on the lattice $\frac1n\Z^d$ killed on exiting from the unit ball. 
In particular, it converges to the trace of the Brownian motion. 
On the other hand, $\lew_n$ converges weakly (along subsequences) to $\koz$, and, as was shown in two dimensions by Lawler and Trujillo \cite{LT04}, 
the loop soups $\ls_n$ and $\bs$ can be coupled so that with high probability there is a one-to-one correspondence between 
all large loops from $\ls_n$ and those from $\bs$, and each large loop from $\ls_n$ is very close, 
in the Hausdorff distance, to the corresponding loop in $\bs$. 
Such a strong coupling of loop soups can be extended to all dimensions with little effort, see Theorem~\ref{thm:coupling:soups}. 
So where is the challenge?

First, we may assume that $\lew_n$ and $\koz$ are defined on the same probability space and $d_H(\lew_n,\koz)\to 0$ almost surely. 
Let $\varepsilon<\delta$, and consider the event that $d_H(\lew_n,\koz)<\varepsilon$ and 
to each loop $\ell_n$ from $\ls_n$ of diameter at least $\delta$ corresponds a unique loop $\ell$ from the Brownian soup 
so that $d_H(\ell_n,\ell)<\varepsilon$. By the strong coupling of loop soups (see Theorem~\ref{thm:coupling:soups}), 
this event has high probability for all large $n$. 
The challenge is to show that the correspondence of loops in the strong coupling makes the right selection of Brownian loops. 
What may go wrong? If a loop $\ell_n\in\ls_n$ intersects $\lew_n$, then the corresponding Brownian loop $\ell$ 
does not have to intersect $\koz$, and vice versa. 
The meat of the proof is then to show that this does not happen. 

To demonstrate a difficulty, notice that very little is known about $\koz$ in three dimensions. 
In particular, it is not a priori clear if the Brownian soup really intersects $\koz$ almost surely (or even with positive probability). 
As we know, it is not the case in dimensions $4$ and higher, 
and the three dimensional Brownian soup does not intersect a line. 
As a result, not all paths in $\R^3$ are hittable by Brownian loops, so we have to show that $\koz$ is hittable. 
Moreover, we want that every Brownian loop of large diameter (bigger than $\delta$) that 
gets close enough (within $\varepsilon$ distance) to $\koz$ intersects it locally, and 
we want the same to be true for large random walk loops and $\lew_n$.

In two dimensions, analogous questions are classically resolved with a help of the Beurling projection principle, see \cite{K87}, 
which states that a random walk starting near any simple path will intersect it with high probability.
As we have just seen, such a principle cannot work in three dimensions for all paths. 
The main novelty of our proof is a Beurling-type estimate for the loop erased random walk 
stating that most of the samples of the LERW are hittable with probability close to one by an independent simple random walk 
started anywhere near the LERW, see Theorem~\ref{thm:beurling}. 
This result is then easily converted into an analogous statement for random walk loops, namely, with high probability, 
the only large loops that are close to $\lew_n$ are those that intersect it, see Proposition~\ref{prop:loopsatlew}.

\medskip

Similar complications arise when we try to show that $\koz$ is a simple path, although now without loop soups. 
We need to rule out a possibility that $\lew_n$ backtracks from far away. 
In his proof that $\koz$ is a simple path in two dimensions \cite{Sch}, Schramm introduced $(\varepsilon,\delta)$-quasi-loops 
as subpaths of $\lew_n$ ending within $\varepsilon$ distance from the start but stretching to distance $\delta$. 
Of course, if a quasi-loop exists for all large $n$, it collapses, in the large-$n$ limit, 
into a proper loop in $\koz$. 
Thus, to show that $\koz$ is a simple path, we need to rule out the existence of quasi-loops uniformly in $n$, namely, 
to show that for all $\delta>0$, 
\[
\lim_{\varepsilon\to0}\P\left[\text{$\lew_n$ does not have a $(\varepsilon,\delta)$-quasi-loop}\right] = 0,\quad\text{uniformly in $n$}.
\]
Schramm proved this in two dimensions using the Beurling projection principle \cite[Lemma~3.4]{Sch}. 
As remarked before, the principle does not longer work in three dimensions, but our Beurling-type esitmate 
is strong enough to get the desired conclusion, see Theorem~\ref{thm:ql}. 

We should mention that Kozma \cite{K} proved that with high probability (as $n\to\infty$), 
$\lew_n$ does not contain $(n^{-\gamma},\delta)$-quasi-loops, see \cite[Theorem~4]{K}. 
This was enough to establish the convergence of $\lew_n$'s, but 
more is needed to show that $\koz$ is a simple path. 
Unfortunately, Kozma's proof strongly relies on the fact that the choice of $\varepsilon$ is $n$-dependent, 
and we need to establish a new method to get the uniform estimate. 

\subsection{Structure of the paper}

The main definitions are given in Section~\ref{sec:defnothist}, the loop erased random walk and its scaling limit in Subsection~\ref{sec:lerw}, 
the random walk loop soup in Subsection~\ref{sec:rwls}, and the Brownian loop soup in Subsection~\ref{sec:bls}. 
In each subsection, we also discuss some properties and a few historical facts about the models. 
Subsection~\ref{sec:bls} also contains the statement about the coupling of the random walk and the Brownian loop soups that we use in the proof of Theorem~\ref{thm:K+BLS=BM}
(see Theorem~\ref{thm:coupling:soups}). 
Some notation that we only use in the proofs are summarized in Subsection~\ref{sec:furthernotation}.

The Beurling-type estimate for the loop erased random walk is given in Section~\ref{sec:beurlingtypeestimate} (see Theorem~\ref{thm:beurling}). 
Some related lemmas about hittability of the LERW are also stated there and may be of independent interest (see Lemmas~\ref{l:kozma:4.6} and \ref{l:4.6}). 
The proof of the Beurling-type estimate is given in Subsection~\ref{sec:proofofbeurling}. 
The rest of the section is devoted to the proof of an auxiliary lemma, and may be omitted in the first reading. 

In Section~\ref{sec:strongcoupling} we construct the coupling of the loop soups satisfying conditions of Theorem~\ref{thm:coupling:soups}. 
This section may be skipped in the first reading. 

The proof of our first main result, Theorem~\ref{thm:K+BLS=BM}, is contained in Section~\ref{sec:BMdecompositionproof}. 
It is based on Theorems~\ref{thm:coupling:soups} and \ref{thm:beurling}. 

In Section~\ref{sec:simplepath}, we prove that the scaling limit of the LERW is a simple path. 
In Subsection~\ref{sec:quasiloops}, we define quasi-loops and prove that the LERW is unlikely to contain them. 
The proof of our second main result, Theorem~\ref{thm:simplepath}, is given in Subsection~\ref{sec:simplepath:proof}. 
It is based on the quasi-loops-estimates from Subsection~\ref{sec:quasiloops}, namely on Propositions~\ref{prop:loopsatlew} and \ref{prop:loopsatboundary}.

Finally, in Section~\ref{sec:Hausdorffdimension} we prove bounds on the Hausdorff dimension of the scaling limit of the LERW stated in Theorem~\ref{thm:hd}. 
This proof is largely based on some earlier results on non-intersection probabilities for independent LERW and SRW obtained in \cite{Shi13}.
We recall these results in Subsection~\ref{sec:Hausdorffdimension:preliminaries} (see \eqref{eq:es:bounds}) and also prove there some of their consequences. 
The upper and lower bounds on the Hausdorff dimension are proved in the remaining subsections.

\section{Definitions, notation, and some history}\label{sec:defnothist}

\subsection{Loop erased random walk and its scaling limit}\label{sec:lerw}

We consider the graph $\Z^d$ with edges between nearest neighbors. 
If $x$ and $y$ are nearest neighbors in $\Z^3$, we write $x\sim y$. 
A path is a function $\gamma$ from $\{1,\ldots,n\}$ to $\Z^d$ for some $n\geq 1$ such that 
$\gamma(i)\sim\gamma(i+1)$ for all $1\leq i\leq n-1$. The integer $n$ is the length of $\gamma$, we denote it by $\mathrm{len}\, \gamma$. 
The {\it loop erasure} of a path $\gamma$, $\loe(\gamma)$, is the (simple) path obtained by removing loops from $\gamma$ in order of their appearance, namely,
\[
\begin{array}{rll}
\loe(\gamma)(1) &= &\gamma(1)\\[5pt]
\loe(\gamma)(i+1) &= &\gamma(j_i+1)\quad \text{if}\quad j_i = \max\{j:\gamma(j) = \loe(\gamma)(i)\}<\mathrm{len}\,\gamma.
\end{array}
\]
We are interested in the loop erasure of a simple random walk path started at $0$ and stopped at the first time when it exits from a large Euclidean ball, 
the {\it loop erased random walk} (LERW). 
The simple random walk started at $x\in\Z^d$ is a Markov chain $\{R(t)\}_{t\in\Z_+}$ with $R(0) = x$ and transition probabilities 
\[
\P[R(t+1) = z~|~R(t) = y] = \left\{\begin{array}{ll} \frac{1}{2d} & \text{if }z\sim y \\ 0 &\text{otherwise.}\end{array}\right.
\]
We denote its law and the expectation by $\P^x$ and $\E^x$, respectively. 

\medskip

LERW was originally introduced \cite{Law0} and studied extensively by Lawler (see \cite{Law99} and the references therein),
who considered LERW as a substitute for the self-avoiding walk (see \cite{Sla}), which is harder to analyze. 
Since its appearance, the LERW has played an important role both in statistical physics and mathematics through its relation to the uniform spanning tree (UST). 
Pemantle \cite{Pem} proved that paths in the UST are distributed as LERWs, 
furthermore, the UST can be generated using LERWs by Wilson's algorithm \cite{Wil}. 

\medskip

We are interested in the scaling limit of the LERW and its connections to the Brownian motion. 
Let $|\cdot|$ be the Euclidean norm in $\R^d$. 
The open ball of radius $r$ is defined as $D_r = \{x\in\R^d~:~|x| < r\}$, and we denote its closure by $\overline D_r$. 
When $r=1$, we just write $D$ and $\overline D$. 
We consider the loop erasure of the simple random walk path on $\Z^d$ from $0$ until the first exit time from $\overline D_n$, 
rescale it by $\frac1n$, and denote the corresponding simple path on the lattice $\frac1n\Z^d$ 
and its linear interpolation by $\lew_n$. 
Consider the metric space $(\mathcal K_D, d_H)$ of all compact subsets of $\overline D$ with the Hausdorff metric. 
We can think of $\lew_n$ as random elements of $\mathcal K_D$. 
Let $\mathrm{P}_n$ be the probability measure on $(\mathcal K_D, d_H)$ induced by $\lew_n$. 
Since $(\mathcal K_D, d_H)$ is compact and the space of Borel probability measures on a compact space is compact in the weak topology, 
for any subsequence $n_k$, we can find a further subsequence $n_{k_i}$ such that $\mathrm{P}_{n_{k_i}}$ converges weakly 
to a probability measure supported on compact subsets of $\overline D$. 
In fact, more is known. 
In two dimensions, $\lew_n$ converges weakly to $\mathrm{SLE}_2$ \cite{LSW} (actually, even in a stronger sense). 
In $3$ dimensions, $\lew_{2^n}$ converges weakly as $n\to\infty$ to a random compact subset of $\overline D$, 
invariant under rotations and dilations \cite{K}. 

The existence of the LERW scaling limit will not be used in this paper. 
In fact, as discussed in the introduction, we are hoping that our approach can give an alternative proof of the existence. 
All our results are valid for any subsequential limit of $\lew_n$, which we denote by $\koz$ throughout the paper, and 
we will write for simplicity of notation that $\lew_n$ converges to $\koz$ without specifying a subsequence.

\subsection{Random walk loop soup}\label{sec:rwls}

To have a useful description of the loops generated by the loop erasure of a random walk path, 
we define a Poisson point process of discrete loops. 

A rooted loop of length $2n$ in $\Z^d$ is a $(2n+1)$-tuple $\gamma = (\gamma_0,\ldots,\gamma_{2n})$ with $|\gamma_i - \gamma_{i-1}| = 1$ and $\gamma_0 = \gamma_{2n}$. 
Let $\rwlspace$ be the space of all rooted loops. 
We are interested in a Poisson point process of rooted loops in which each individual loop ``looks like'' a random walk bridge. 
We define the random walk {\it loop measure} $\rwlm$ as a sigma finite measure on $\rwlspace$ giving 
the value $\frac{1}{2n}\cdot\left(\frac{1}{2d}\right)^{2n}$ to each loop of length $2n$. 
The factor $\frac{1}{2n}$ should be understood as choosing the root of the loop of length $2n$ uniformly.
The {\it random walk loop soup} $\rwls$ is the Poisson point process on the space $\rwlspace\times(0,\infty)$ 
with the intensity measure $\rwlm\otimes\mathrm{Leb}_1$. 
For each $\lambda>0$, the random walk loop soup induces the Poisson point process on the space $\rwlspace$ 
with the intensity measure $\lambda\rwlm$, as a pushforward by the function 
$\sum_i \mathds{1}_{(\gamma_i,\lambda_i)} \mapsto \sum_{i:\lambda_i\leq \lambda}\mathds{1}_{\gamma_i}$.
We call the resulting process the {\it random walk loop soup of intensity} $\lambda$ and denote it by $\rwls^\lambda$. 

\medskip

Poisson ensembles of Markovian loops (loop soups) were introduced informally by Symanzik \cite{Sym} 
as a representation of the $\phi^4$ Euclidean field, and subsequently extensively researched 
in the physics community. 
The first rigorous definition of a loop soup was given by Lawler and Werner \cite{LW04} in the context of planar Brownian motion.
Our definition of the random walk loop soup is taken from \cite[Chapter 9]{LL10}.  
Random walk and Brownian loop soups have lately been an object of large attention from probabilists and mathematical physicists 
due to their intimate relations to the Gaussian free field, see, e.g., \cite{LeJ08,Szn12}. 
Of particular importance for us, is the following decomposition of a random walk path into its loop erasure and a 
collection of loops coming from an independent random walk loop soup of intensity $1$. 

\begin{proposition}\label{prop:RWdecomposition}\cite[Propositions~9.4.1 and 9.5.1]{LL10}
Let $L_n$ be the loop erasure of a simple random walk on $\Z^d$ started at $0$ and stopped upon exiting from $D_n$. 
Let $\rwls^1$ be an independent random walk loop soup, and denote by $R_n$ be the set of all loops (with multiplicities) from $\rwls^1$ 
that are contained in $D_n$ and intersect $L_n$. Then the union of $L_n$ and $R_n$ has the same law 
as the trace of a simple random walk on $\Z^d$ started at $0$ and stopped upon exiting from $D_n$. 
\end{proposition}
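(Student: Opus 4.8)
\emph{Proof sketch.} The plan is to exhibit both sides of the identity as a simple ``spine'' carrying a family of loops, to identify the conditional law of the loops given the spine, and to check that the two conditional laws agree. On the random walk side: run a simple random walk $X$ from $0$ until it first leaves $D_n$, and put $\eta=(\eta_0=0,\eta_1,\dots,\eta_k)=\loe(X)$. If $s_i$ is the time of the last visit of $X$ to $\eta_i$, then $\eta_{i+1}=X(s_i+1)$, and the excursion $\ell_i:=X[s_{i-1}+1,s_i]$ (with $s_{-1}:=-1$) is a loop rooted at $\eta_i$ that is contained in $D_n$ and avoids $\{\eta_0,\dots,\eta_{i-1}\}$; moreover $X$ is exactly the concatenation of $\ell_0$, the step $\eta_0\to\eta_1$, $\ell_1$, the step $\eta_1\to\eta_2$, $\dots$, $\ell_{k-1}$, and the step $\eta_{k-1}\to\eta_k$. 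Since under $\P^0$ a path of length $m$ has weight $(2d)^{-m}$, summing over the admissible $\ell_0,\dots,\ell_{k-1}$ gives $\P^0[\loe(X)=\eta]=(2d)^{-k}\prod_{i=0}^{k-1}G_{B_i}(\eta_i,\eta_i)$, where $B_i:=D_n\setminus\{\eta_0,\dots,\eta_{i-1}\}$ and $G$ is the random walk Green's function; consequently, conditionally on $\loe(X)=\eta$, the loops $\ell_0,\dots,\ell_{k-1}$ are independent and $\ell_i$ follows the law $\nu_{\eta_i,B_i}$ assigning to a loop $\ell$ rooted at $\eta_i$ and contained in $B_i$ the probability $(2d)^{-\mathrm{len}\,\ell}/G_{B_i}(\eta_i,\eta_i)$. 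Since the trace of $X$ equals $\eta\cup\bigcup_i\ell_i$, it suffices to identify the conditional law, given $\loe(X)=\eta$, of the random set $\bigcup_i\ell_i$.

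On the loop-soup side: I would take an independent copy $X'$ of $X$ (so $L_n=\loe(X')$), an independent loop soup $\rwls^1$, and condition on $L_n=\eta$ (which is independent of $\rwls^1$). To each loop $\gamma\in\rwls^1$ that is contained in $D_n$ and meets $\eta$, attach the index $\iota(\gamma)=\min\{i:\eta_i\in\gamma\}$; then $\gamma$ is contained in $B_{\iota(\gamma)}$ and passes through $\eta_{\iota(\gamma)}$. By the restriction property of Poisson processes, the subfamilies $\{\gamma\in\rwls^1:\gamma\subseteq D_n,\ \iota(\gamma)=i\}$, $0\le i\le k$, are independent, and the $i$-th is a Poisson collection of loops with intensity $\rwlm$ restricted to the loops through $\eta_i$ contained in $B_i$. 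Writing $\Gamma_i$ for the union of the loops of this $i$-th collection, one gets $L_n\cup\bigcup R_n=\eta\cup\bigcup_i\Gamma_i$, with the $\Gamma_i$ independent given $\eta$ (and $\Gamma_k=\emptyset$ since $\eta_k\notin D_n$). Comparing with the previous paragraph, the proof reduces to the single-vertex identity: \emph{for a point $x$ in a finite set $B$, the random set $\{x\}\cup\big(\text{union of a Poisson collection of }\rwlm\text{-loops through }x\text{ contained in }B\big)$ has the same law as the trace of a sample from $\nu_{x,B}$.}

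I would prove this single-vertex identity by matching avoidance functionals: both sets are random subsets of the finite set $B$ containing $x$, so each is determined by $F\mapsto\P[\,\cdot\subseteq\{x\}\cup F\,]$ with $F$ ranging over subsets of $B\setminus\{x\}$. Put $B':=(\{x\}\cup F)\cap B$. On one hand, $\P[\text{trace of a }\nu_{x,B}\text{-sample}\subseteq\{x\}\cup F]=G_{B'}(x,x)/G_B(x,x)$, since $\sum_{\ell\subseteq B'}(2d)^{-\mathrm{len}\,\ell}=G_{B'}(x,x)$. On the other hand, the Poisson union lies in $\{x\}\cup F$ exactly when the Poisson collection has no loop leaving $B'$, which by the void-probability formula has probability $\exp\big\{-\rwlm(\{\text{loops through }x\text{ in }B\})+\rwlm(\{\text{loops through }x\text{ in }B'\})\big\}$, equal to $G_{B'}(x,x)/G_B(x,x)$ by the standard loop-measure identity $\rwlm(\{\text{loops through }x\text{ in }A\})=\log G_A(x,x)$ (see \cite[Chapter~9]{LL10}). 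Applying this with $x=\eta_i$, $B=B_i$ for $i=0,\dots,k-1$, and combining the independence over $i$ on both sides with $\loe(X)\stackrel{d}{=}\loe(X')\stackrel{d}{=}L_n$, one obtains that, conditionally on the loop-erased path being $\eta$, $\eta\cup\bigcup_i\ell_i$ and $\eta\cup\bigcup_i\Gamma_i$ have the same law; averaging over $\eta$ shows that the trace of $X$ has the same law as $L_n\cup\bigcup R_n$, which is the assertion.

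The hard part is the loop-measure identity $\rwlm(\{\text{loops through }x\text{ in }A\})=\log G_A(x,x)$, which is where the normalisations built into $\rwls^1$ — the intensity being exactly $1$ and the factor $\tfrac{1}{2n}$ uniformising the root in the definition of $\rwlm$ — enter decisively: it amounts to the resummation $\sum_{m\ge1}\tfrac{1}{m} f^m=-\log(1-f)$ with $f=\P^x[\text{return to }x\text{ before leaving }A]=1-G_A(x,x)^{-1}$, but the passage between rooted and unrooted loops requires care, because a loop through $x$ may return to $x$ several times and may be rooted at any of its vertices. A secondary but essential point — and the reason one must argue through avoidance functionals rather than a loop-by-loop comparison — is that the statement concerns only traces: a loop of $\rwls^1$ may occur with multiplicity and distinct loops may overlap, so there is no bijection between the loops appearing on the two sides.
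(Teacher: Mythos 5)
The paper offers no proof of its own here: it cites the result directly to \cite[Propositions~9.4.1 and 9.5.1]{LL10}. Your argument is correct and reconstructs the standard proof from that reference — the excursion decomposition of the walk at the loop-erasure vertices giving $\P^0[\loe(X)=\eta]=(2d)^{-k}\prod_iG_{B_i}(\eta_i,\eta_i)$ with the excursions conditionally independent, the Poisson thinning of the soup by the first index $\iota(\gamma)$ at which a loop meets $\eta$, and the matching of the two conditional laws at each vertex through the avoidance functional together with the identity $\rwlm(\{\text{loops through }x\text{ in }A\})=\log G_A(x,x)$.
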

Our goal is to pass to the scaling limit in the above decomposition to get a similar representation for the Brownian path. 
The scaling limit of $L_n$ is a random compact subset of a unit ball, as discussed in the previous section. 
We will soon see that the scaling limit of a random walk loop soup is the Brownian loop soup of Lawler and Werner, which we introduce in the next section.

\medskip

We finish this section with a hands-on definition of the random walk loop soup. 
Let $\rwlm(z,n)$ be the restriction of $\rwlm$ to the loops of length $2n$ rooted at $z$. 
It is a finite measure with the total mass $\rwlm(z,n)[\rwlspace] = \frac{1}{2n}\, p_{2n}(z,z)$, where 
$p_{2n}(x,y)$ is the probability that the simple random walk started at $x$ will be at $y$ at step $2n$, 
and $\frac{\rwlm(z,n)}{\rwlm(z,n)[\rwlspace]}$ is the probability distribution of the random walk bridge of length $2n$ starting and ending at $z$. 
The measure $\rwlm$ can be expressed as a linear combination of probability measures on $\rwlspace$, 
\begin{equation}\label{def:rwlm}
\rwlm = \sum_{z\in\Z^d}\sum_{n\geq 1}\rwlm(z,n) = \sum_{z\in\Z^d}\sum_{n\geq 1}\frac{p_{2n}(0,0)}{2n}\cdot \frac{\rwlm(z,n)}{\rwlm(z,n)[\rwlspace]}\, ,
\end{equation}
which leads to the following simple recipe for sampling the random walk loop soups.  
Let 
\[
\widetilde N(z,n;\cdot),\quad n\in\{1,2,\ldots\}, z\in\Z^d,
\]
be independent Poisson point processes on $(0,\infty)$ with parameter $\frac{p_{2n}(0,0)}{2n}$. 
Let 
\[
\widetilde L(z,n;m),\quad n\in\{1,2,\ldots\}, z\in\Z^d, m\in\{1,2,\ldots\}, 
\]
be independent random walk bridges of length $2n$ starting and ending at $0$, independent of all the $\widetilde N(z,n;\cdot)$.
Then the multiset
\begin{equation}\label{def:rwls:2}
\left\{z+\widetilde L(z,n;m)~:~z\in\Z^d,~n\geq 1,~1\leq m\leq \widetilde N(z,n;\lambda)\right\} 
\end{equation}
is the random walk loop soup of intensity $\lambda$. 
In other words, we first generate the number of (labeled) random walk bridges of length $2n$, rooted at $z$, and 
with label at most $\lambda$, $\widetilde N(z,n;\lambda)$, and then sample their shapes according to the random walk bridge measure $\frac{\rwlm(z,n)}{\rwlm(z,n)[\rwlspace]}$.

\subsection{Brownian loop soup and a strong coupling of loop soups}\label{sec:bls}

Recall our strategy -- we want to get a decomposition of a Brownian path by taking a scaling limit of both sides in the 
corresponding random walk path decomposition. 
For this, we still need to discuss the existence of a scaling limit of the random walk loop soup. 
Actually, the scaling limit is explicit, it is the Brownian loop soup of Lawler and Werner \cite{LW04}, and we now give its description. 

\medskip

A rooted loop in $\R^d$ is a continuous function $\gamma:[0,t_\gamma]\to\R^d$ with $\gamma(0) = \gamma(t_\gamma)$, 
where $t_\gamma\in(0,\infty)$ is the time duration of $\gamma$. 
We denote by $\blspace$ the set of all rooted loops. 
For $z\in\R^d$ and $t>0$, let $\bbm(z,t)$ be the measure on $\blspace$ induced by 
the Brownian bridge from $z$ to $z$ of time duration $t$. 
The {\it Brownian loop measure} $\blm$ is the measure on $\blspace$ given by 
\[
\blm = \int_{\R^d}\int_0^\infty \frac{1}{t\cdot (2\pi t)^{\frac d2}}\, \bbm(z,t)\,dt\,dz.
\]
Notice the analogy with a similar representation \eqref{def:rwlm} of the random walk loop measure as a linear combination of random walk bridge measures. 
The measure $\blm$ of course inherits the invariance under the Brownian scaling, ($r\cdot \mathrm{space}, r^2\cdot \mathrm{time~duration}$), 
from the bridge measures. 

The {\it Brownian loop soup} $\bls$ in $\R^d$ is the Poisson point process on the space $\blspace\times(0,\infty)$ 
with the intensity measure $\blm\otimes \mathrm{Leb}_1$. 
For each $\lambda>0$, the Brownian loop soup induces the Poisson point process on the space $\blspace$ 
with the intensity measure $\lambda\blm$, as a pushforward by the function 
$\sum_i \mathds{1}_{(\gamma_i,\lambda_i)} \mapsto \sum_{i:\lambda_i\leq \lambda}\mathds{1}_{\gamma_i}$. 
We call the resulting process the {\it Brownian loop soup of intensity $\lambda$} and denote it by $\bls^\lambda$. 

\medskip

The Brownian loop soups exhibit strong connections with the Schramm-Loewner evolution and the Gaussian free field, see, e.g., \cite{Cam15} for an overview, 
and they have been quite extensively studied. The connection between the random walk loop soups and the Brownian ones has been shown by Lawler and Trujillo \cite{LT04} 
in two dimensions, who constructed a strong coupling between the two loop soups -- much more than needed to see that the scaling limit of a random walk loop soup is 
a Brownian soup. For our purposes, we need to extend the result of \cite{LT04} to higher dimensions.
Actually, only to dimension $3$, but we give an extension to arbitrary dimensions, 
as, on the one hand, the proof does not get more complicated, and, on the other, it may be instructive to see the dependence of various parameters on the dimension. 
Let
\begin{equation}\label{def:blrange}
\blrange = \frac{3d+4}{2d(d+2)}.
\end{equation}
\begin{theorem}\label{thm:coupling:soups}
There exist $C<\infty$ and a coupling of the Brownian loop soup $\bls = \{\bls^\lambda\}_{\lambda>0}$ and the random walk loop soup $\rwls = \{\rwls^\lambda\}_{\lambda>0}$
such that for any $\lambda>0$, $r\geq 1$, $N\geq 1$, and $\theta \in \left(\frac{2d}{d+4}, 2\right)$,
on the event of probability 
\[
\geq 1 - C\, (\lambda+1)\, r^d\, N^{-\min\left(\frac{d}{2},\,\theta(\frac d2+2)-d\right)}
\]
there is a one-to-one correspondence of random walk loops from $\rwls^\lambda$ of length $\geq N^\theta$ rooted in $[-rN,rN]^d$ 
and Brownian loops from $\bls^\lambda$ of length $\geq \frac{N^\theta-2}{d}+\blrange$ rooted in $[-rN-\frac12,rN+\frac12]^d$, 
such that the time durations of the corresponding loops differ by at most $\blrange$, 
and the supremum distance between the corresponding loops is $\leq C\, N^{\frac34}\log N$. 
Here, each discrete loop is viewed as a rooted loop in $\R^d$ after linear interpolation. 
\end{theorem}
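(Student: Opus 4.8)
The plan is to couple the two loop soups loop-by-loop via the Komlós--Major--Tusnády (KMT) strong approximation of random walk by Brownian motion, applied at the level of bridges, and then control the accumulated errors through a Poisson thinning/counting argument. I would begin by unwinding the explicit ``recipe'' descriptions of the two soups: by \eqref{def:rwlm}, the random walk loop soup is generated by independent Poisson processes $\widetilde N(z,n;\cdot)$ of intensity $\frac{p_{2n}(0,0)}{2n}$ together with independent random walk bridges of length $2n$ based at $z$; the Brownian loop soup has the analogous description with intensity $\frac{1}{t(2\pi t)^{d/2}}$ and Brownian bridges. The first step is to match the two intensity measures: using the local central limit theorem, $p_{2n}(0,0) \sim 2\left(\frac{d}{4\pi n}\right)^{d/2}$, one gets that $\frac{p_{2n}(0,0)}{2n}$, as a function of the ``time parameter'' $t = 2n/d$ (so that a simple random walk bridge of length $2n$ scales to a Brownian bridge of duration $\approx 2n/d$), is within a relative error $O(1/n)$ of the Brownian loop intensity density. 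This controlled discrepancy is what forces the shift $\blrange = \frac{3d+4}{2d(d+2)}$ in the time durations and the asymmetric length thresholds in the statement; I would make the change of variables precise and bookkeep exactly how much Poisson mass can be lost or gained when we truncate at a given length, so that a standard coupling of Poisson processes with nearby intensities succeeds off an event of the stated probability.

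Next, conditionally on the (now matched) numbers and roots of loops, I would couple each individual random walk bridge of length $2n \geq N^\theta$ with a corresponding Brownian bridge of duration $\approx 2n/d$ (adjusted by at most $\blrange$) using the dyadic KMT construction for bridges: the supremum distance between a random walk path of $m$ steps and its Brownian approximation is $O(\log m)$ with overwhelming probability, and after Brownian rescaling of a loop of length $2n$ rooted in $[-rN,rN]^d$ one is looking at paths of up to $\asymp (rN)^2$ steps, giving a sup-distance bound of order $\log(rN)$ — but since we want the bound uniformly over all the loops simultaneously, and a soup restricted to $[-rN,rN]^d$ at intensity $\lambda$ contains of order $\lambda r^d N^d$ loops (dominated by the tiny ones; the long ones number only polynomially in $N$), a union bound over loops of length in dyadic blocks $[2^k, 2^{k+1})$ costs an extra factor that is absorbed into the $N^{3/4}\log N$ bound after optimizing; this is where the exponent $\frac34$ and the restriction $\theta > \frac{2d}{d+4}$ come from — we only need the sup-distance estimate for loops long enough that the KMT error is negligible against $N^{3/4}$, and $\theta > \frac{2d}{d+4}$ is exactly the threshold making the number of such loops small enough for the union bound.

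The main obstacle, and the step I would spend most care on, is the \emph{simultaneous} control: getting a single low-probability exceptional event outside of which \emph{every} long loop is well-approximated, with the quantitative bound $C(\lambda+1) r^d N^{-\min(d/2,\,\theta(d/2+2)-d)}$. The two competing error sources are (i) the Poisson-coupling failure near the length threshold, whose probability is governed by the fluctuation of a Poisson variable with mean $\asymp r^d N^d \cdot N^{-\theta(1+d/2)}$ (yielding the exponent $\theta(\frac d2+2) - d$ after accounting for the number of root cells), and (ii) the KMT deviation for at least one of the long bridges, whose probability decays polynomially and contributes the competing term $N^{-d/2}$. I would carry out the optimization of the dyadic union bound explicitly to confirm that these are the only two relevant terms and that the constant depends on $\lambda$ only through $(\lambda+1)$ (linearity of Poisson intensity in $\lambda$, plus the trivial loops at small $\lambda$). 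Everything else — linear interpolation of discrete loops, the exact rounding of $N^\theta$ to even integers, handling loops rooted near the boundary of $[-rN,rN]^d$ — is routine and I would relegate it to the detailed write-up in Section~\ref{sec:strongcoupling}.
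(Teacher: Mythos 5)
Your overall plan is in the right ballpark -- match intensities via a Poisson coupling, couple individual bridges, and then ensure the errors are controlled simultaneously. But there is a genuine gap in the bridge-coupling step, and your explanation of where the $N^{3/4}\log N$ bound comes from is wrong.

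You assume a KMT-type coupling for a $d$-dimensional random walk bridge of $m$ steps with sup-distance error $O(\log m)$. That is not what the paper uses, and it is not readily available in $d\geq 2$. The coupling the paper actually constructs (Lemma~\ref{l:coupling:bridges}) has normalized error $O(m^{-1/4}\log m)$, i.e.\ unnormalized error $O(m^{1/4}\log m)$, which is substantially weaker than $O(\log m)$. The loss of a power compared to the one-dimensional result of \cite[Lemma~3.1]{LT04} (where the normalized error is $O(m^{-1/2}\log m)$) is genuine: the paper builds the $d$-dimensional bridge coordinate by coordinate, conditioning on the multinomial choice of jump directions, and the time-change induced by that multinomial process has fluctuations of order $\sqrt{m\log m}$, which feed back into the spatial error as $m^{1/4}\cdot(\log m)^{3/4}$-scale (see Claim~\ref{cl:coupling:delayedrw}). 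This is precisely why the sup-distance bound in the theorem is $N^{3/4}\log N$ and not $\log N$: one first rules out loops of length $\geq N^{3}$ (cost $\lesssim \lambda r^{d}N^{-d/2}$ -- that is where the $N^{-d/2}$ in the probability actually comes from, not from a KMT deviation), and then the per-loop coupling error for the longest admissible loops, of length $\lesssim N^{3}$, is $(N^{3})^{1/4}\log N = N^{3/4}\log N$. Your alternative explanation of $N^{3/4}$ as the cost of a dyadic union bound, and of $\theta>\frac{2d}{d+4}$ as the threshold making the KMT error negligible, is not correct: the restriction on $\theta$ is simply the condition $\theta(\frac d2+2)-d>0$ needed for the Poisson-coupling failure term $r^{d}N^{d-\theta(d/2+2)}$ to decay, and the union bound over (polynomially many) loops is absorbed by choosing the large-deviation exponent $D$ in the bridge coupling large; it costs no extra power in the sup-distance. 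To make your write-up correct you would need either (a) a genuine $d$-dimensional KMT-for-bridges theorem with $O(\log m)$ error, in which case you would be proving a stronger statement than the paper and should say so explicitly, or (b) to replace your invocation of KMT with the weaker coupling constructed in Lemma~\ref{l:coupling:bridges} and rederive the $N^{3/4}\log N$ bound from the $m^{1/4}\log m$ per-loop error together with the cutoff at loop length $N^{3}$.
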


\subsection{Further notation}\label{sec:furthernotation}

In this section, we summarize all the remaining notation that will be used at least in two different proofs. 
Those that are used only once are deferred until more appropriate spots. 

For $v\in\R^d$ and $r>0$, the (discrete) ball of radius $r$ centered at $v$ is 
the set 
\[
B(v,r) = \{x\in\Z^d:|x-v|\leq r\}.
\]
For $A\subset\Z^d$, we denote by $\partial A$ the exterior vertex boundary of $A$, namely, 
\[
\partial A = \{x\notin A:x\sim y\text{ for some $y\in A$}\}.
\]
We also define $\overline A = A\cup\partial A$. 
The boundary of a subset $V$ of $\R^d$ is denoted by $\partial_{\R^d}V$. 

\medskip

For a random walk $R$, we denote the hitting time of a set $A\subset\Z^d$ by $R$ by 
\[
T(A) = \inf\{t\geq 1:R(t)\in A\}. 
\]
For $v\in\R^d$ and $r>0$, we write 
\[
T_{v,r} = T(\partial B(v,r)).
\]
Quite often, we will consider two independent random walks on the same space. 
If so, we will denote these random walks by $R^1$ and $R^2$, 
their laws by $\P^{1,x_1}$ and $\P^{2,x_2}$ (where $x_i=R^i(0)$), and the corresponding hitting times by $T^i$ and $T^i_{v,r}$. 

\medskip

If $\gamma$ is a path, we denote by $\gamma[a,b]$ the path (or the set, depending on a situation) in $\Z^d$ consisting of the vertices $\gamma(a),\gamma(a+1),\ldots,\gamma(b)$.
If $\gamma_1$ and $\gamma_2$ are two paths in $\Z^d$ and $\gamma_1(\mathrm{len}\,\gamma_1)\sim\gamma_2(1)$, 
then we denote by $\gamma_1\cup\gamma_2$ the path of length $\mathrm{len}\,\gamma_1 + \mathrm{len}\,\gamma_2$ 
obtained by concatenating $\gamma_1$ and $\gamma_2$. 

\medskip

For a set $S\subset\R^d$ and $\epsilon>0$, we denote by $S^{+\epsilon}$ the $\epsilon$-neighborhood of $S$ and 
by $S^{-\epsilon}$ the subset of points of $S$ at distance $>\epsilon$ from the complement of $S$. 

\medskip

Finally, let us make a convention about constants. Large constants whose values are not important are denoted by $C$ and $C'$ and small ones by $c$ and $c'$. 
Their dependence on parameters varies from proof to proof. Constants marked with a subindex, e.g., $C_1$, $C_H$, $c_2$, $c_*$, 
keep their values within the proof where they appear, but will change from proof to proof.

\section{Beurling-type estimate}\label{sec:beurlingtypeestimate}

Throughout this section we assume that the dimension of the lattice is $3$. 
We prove that the loop erasure of a simple random walk is hittable with high probability by an independent random walk started 
anywhere near the loop erasure. 
\begin{theorem}\label{thm:beurling}
There exist $\eta>0$ and $C<\infty$ such that for any $\varepsilon>0$ and $n\geq 1$,  
\begin{equation}\label{eq:beurling}
\P^{1,0} \left[
\begin{array}{c}
\text{ For any  $x \in B(0,n)$ with $\mathrm{dist}\left( x, \loe \left( R^1[0, T^1_{0,n} ] \right) \right) \leq \varepsilon^2 n$,}\\[5pt]
\P^{2,x} \left[ R^2[0, T^2_{x,\sqrt{\varepsilon} n}] \cap   \loe \left( R^1[0, T^1_{0,n} ] \right)= \emptyset \right] \leq \varepsilon^{\eta}
\end{array}
\right]
\geq 1- C\varepsilon,
\end{equation}
see Section~\ref{sec:lerw} for the definition of $\loe$ and Section~\ref{sec:furthernotation} for the other notation. 
\end{theorem}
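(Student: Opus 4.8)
The plan is to establish the Beurling-type estimate by a multi-scale argument, reducing the global hittability statement to a local one that we then iterate. Fix a large integer $K$ and consider the dyadic-type scales $r_k = \varepsilon^{2}n \cdot 2^{k}$ for $k=0,1,\ldots$, up to the macroscopic scale $n$; there are roughly $\log_2(\varepsilon^{-2})$ such scales. The key local input is that for a simple random walk $R^2$ started at a point $x$ within distance $r_k$ of the loop erasure $\loe(R^1[0,T^1_{0,n}])$, with probability bounded away from $1$ (by some universal $1-c$, on a good event for $R^1$) the walk $R^2$ run until it exits $B(x, r_{k+\ell})$ for a suitable fixed number of scales $\ell$ will hit the loop erasure. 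Iterating this over the $O(\log \varepsilon^{-1})$ scales from $\varepsilon^2 n$ up to $\sqrt{\varepsilon}\,n$ gives a failure probability of at most $(1-c)^{\#\text{scales}} = \varepsilon^{\eta}$ for a suitable $\eta>0$, which is exactly the desired bound on the inner probability; here the number of scales between $\varepsilon^2 n$ and $\sqrt{\varepsilon}\,n$ is $\asymp \log \varepsilon^{-1}$, producing the power of $\varepsilon$. The outer event — that $R^1$ is ``good'' so that the local hittability holds at all scales and all relevant starting points simultaneously — must be shown to have probability $\geq 1-C\varepsilon$.

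The heart of the matter is the local hittability estimate for the loop-erased walk, and this is where the real work lies. Concretely, one needs: given a ball $B(x,r)$ with $x$ near $\loe(R^1)$, the trace of $\loe(R^1)$ inside $B(x,r)$ is, with high $R^1$-probability, ``thick enough'' at scale $r$ that an independent random walk started near it and run to distance $\asymp r$ hits it with probability at least $c>0$. This is the three-dimensional substitute for the Beurling projection principle: in $\R^3$ a line is not hit, but the LERW has dimension $>1$ and, crucially, locally contains a sub-piece of a full random-walk path (before loop erasure) connecting two well-separated points, and such paths \emph{are} hittable. The natural tool is a second-moment / conditional second-moment computation for the number of sites of $\loe(R^1)\cap B(x,r)$ visited by $R^2$, combined with the exact Markovian description of the LERW (via Wilson's algorithm or the domain Markov property of loop erasure) to control the first and second moments; the non-intersection exponent $\xi$ and estimates on the Green's function and on $\P[R^2 \text{ hits } \loe(R^1)]$ enter here. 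One expects this to be packaged as the lemmas referenced in the excerpt (Lemmas~\ref{l:kozma:4.6} and \ref{l:4.6}), following Kozma's framework but upgraded to be uniform in the starting point and at a fixed (not $n$-dependent) scale.

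The remaining steps are bookkeeping: (i) a union bound over the $O(\varepsilon^{-O(1)})$ lattice points $x\in B(0,n)$ at distance $\leq \varepsilon^2 n$ from the loop erasure and over the $O(\log\varepsilon^{-1})$ scales, absorbing the polynomial count into the good event by choosing the local estimate to hold with $R^1$-probability $\geq 1 - \varepsilon^{100}$, say; (ii) a Markov-property / restart argument to chain the single-scale estimates into the iterated bound $\varepsilon^{\eta}$; and (iii) checking that starting $R^2$ at distance up to $\varepsilon^2 n$ and stopping it at distance $\sqrt{\varepsilon}\,n$ leaves enough scales for the iteration (indeed $\log(\sqrt{\varepsilon}n) - \log(\varepsilon^2 n) = \tfrac32\log\varepsilon^{-1} \to \infty$). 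The main obstacle, clearly, is step (i) of the local estimate — proving that the loop erasure is uniformly locally hittable with overwhelming $R^1$-probability — since this is precisely the point at which the two-dimensional Beurling argument breaks down and where a genuinely new three-dimensional input (exploiting the hittability of the pre-erasure random walk segments together with sharp one- and two-point function estimates for the LERW) is required.
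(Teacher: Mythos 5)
Your high-level plan---multi-scale iteration of a local hittability estimate, combined with a union bound over starting points made affordable by a very high $R^1$-probability good event---is the right shape, and you correctly identify that the entire difficulty is a uniform local hittability statement at a fixed (not $n$-dependent) scale. The paper's actual proof follows the same skeleton: Theorem~\ref{thm:beurling} splits into $x\in B(0,\varepsilon n)$ (handled by Kozma's Lemma~\ref{l:kozma:4.6}) and $x\notin B(0,\varepsilon n)$ (covered by $O(\varepsilon^{-6})$ balls of radius $\varepsilon^2 n$, reduced by Harnack to ball centers, then Lemma~\ref{l:4.6} with $K=7$), and the multi-scale iteration lives inside the proofs of the two lemmas.

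There are two genuine gaps in how you propose to supply the local hittability estimate.

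First, the mechanism you describe for local hittability does not quite work. You suggest that $\loe(R^1)$ ``locally contains a sub-piece of a full random-walk path (before loop erasure) connecting two well-separated points, and such paths \emph{are} hittable.'' But a generic sub-piece of the pre-erasure walk is \emph{not} contained in $\loe(R^1)$; loop erasure can delete it. What \emph{is} preserved is the set of cut points $\mathrm{cut}(\gamma;\cdot)$ of the underlying walk, and it is precisely cut points that the paper uses (Claim~\ref{c:4.4}, relying on \cite[Lemma~4.2]{K}). The new three-dimensional input, the analogue of Beurling, is not a second-moment computation as you suggest; it is Claim~\ref{c:4.3}, a potential-theoretic estimate saying that a random walk conditioned to avoid a set $\Gamma\subset B(v,n)^c$ still exits $B(0,n)$ at a uniformly positive distance from $B(v,n)^c$ with probability bounded below. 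This conditional exit estimate is what makes the iteration start afresh at each scale without losing control, and it is the replacement for the Beurling projection principle. You would need something playing this role; a second-moment argument by itself does not obviously give a statement that survives the conditioning you inherit from earlier scales.

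Second, the iteration across scales is more delicate than a restart argument, because the loop erasure is not a Markov object: after you have ``committed'' a piece of $\loe(R^1)$ in one annulus, a later excursion of $R^1$ can return and erase it. You cannot simply chain $(1-c)$ across scales. The paper handles this with the $\loe_1/\loe_2$ decomposition, the stopping times $\tau_i$ that track the last time the loop erasure is entirely outside $B(0,s_i)$, the carefully-constructed events $\mathcal B_i'$ that depend only on $R^1$ up to $\tau_{i+1}$, and Claim~\ref{c:4.6.1}, which verifies by induction that on a ``good'' index the committed crossing of the annulus survives all future erasures. Your proposal treats the chaining as routine bookkeeping, but this is exactly the point at which a naive argument breaks and where Kozma's framework (and its strengthening here) earns its keep.
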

A result analogous to Theorem~\ref{thm:beurling} in $2$ dimensions is known as the Beurling projection principle, see \cite{K87}. 
It states that for any $\eta<\frac34$, the probability on the left hand side of \eqref{eq:beurling} equals to $1$.
In dimensions $d\geq 4$, the result of Theorem~\ref{thm:beurling} is not true. 

\medskip

Before moving on to the proof of Theorem~\ref{thm:beurling}, we discuss its main ingredients. They 
are of independent interest and also will be used in other proofs in this paper. 
First of all, from the point of view of this work, it would be enough to prove the estimate \eqref{eq:beurling} only for all those $x\in B(0,n)$ that are 
at least $\varepsilon n$ distance away from $0$ and the complement of $B(0,n)$. 
However, Theorem~\ref{thm:beurling} is a valuable tool in the study of loop erased random walks in three dimensions
and its applications will surely spread beyond the topics covered in this paper. 

The proof of Theorem~\ref{thm:beurling} is done by considering separately the cases when $x\in B(0,\varepsilon n)$ and $x\notin B(0,\varepsilon n)$. 
In the first case we use \cite[Lemma~4.6]{K},
which states that the LERW is hittable by an independent random walk in any wide enough annulus centered at the origin. 
\begin{lemma}\label{l:kozma:4.6}\cite[Lemma~4.6]{K}
For any $K\geq 1$, there exist $\eta>0$ and $C<\infty$ such that for all $r>s>1$, 
\[
\P^{1,0} \left[
\begin{array}{c}
\text{There exists $T\geq 0$ such that $\loe\left(R^1[0,T]\right)\nsubseteq B(0,r)$, and }\\[5pt]
\P^{2,0} \left[ R^2[0, T^2_{0,r}] \cap   \loe \left( R^1[0, T] \right)\cap \left(B(0,r)\setminus \overline{B(0,s)}\right)= \emptyset \right] > \left(\frac sr\right)^{\eta}
\end{array}
\right]
\leq C\, \left(\frac sr\right)^K.
\]
\end{lemma}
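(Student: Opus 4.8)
The plan is a multiscale argument over dyadic annuli. It suffices to treat $r=2^b$, $s=2^a$ with $a<b$ integers, and to set $m=b-a$ (the general case follows by adjusting constants). For $a\le k<b$ put $A_k=B(0,2^{k+1})\setminus\overline{B(0,2^k)}$, so that $B(0,r)\setminus\overline{B(0,s)}=A_a\sqcup\cdots\sqcup A_{b-1}$. On the event $\{\loe(R^1[0,T])\nsubseteq B(0,r)\}$ the loop-erased path runs from $0$ to $\partial B(0,r)$ and hence contains, for each $k$, a crossing $L_k$ of $A_k$ (a connected sub-path of $\loe(R^1[0,T])$ joining $\partial B(0,2^k)$ to $\partial B(0,2^{k+1})$ inside $\overline{A_k}$); similarly the second walk crosses each $A_k$. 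The aim is to show that the event in the statement is contained, outside an event of probability $\le C(s/r)^K$, in the event that at least $m/2$ of the scales $k$ are ``atypical,'' and then to bound the latter event.

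The crux is a single-scale estimate, uniform in $k$: \emph{for every $\epsilon_0>0$ there is $p_0=p_0(\epsilon_0)>0$ such that, off an $R^1$-event $\mathcal B_k$ of probability $\le\epsilon_0$,}
\[
\P^{2,y}\!\left[R^2[0,T^2_{0,2^{k+1}}]\cap L_k\ne\emptyset\right]\;\ge\;p_0\qquad\text{for every }y\in\partial B(0,2^k).
\]
In three dimensions this genuinely uses the ``fatness'' of the loop-erased walk at every scale --- a straight crossing would be missed by an independent walk with high probability --- so I would prove it by the second-moment method. Let $Z_k$ count the points of $L_k$ visited by $R^2$ before it first leaves $B(0,2^{k+1})$. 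Using one- and two-point bounds for the Green's function of $R^2$ killed on $\partial B(0,2^{k+1})$, together with standard size and separation estimates for a typical loop-erased crossing of a dyadic annulus, one obtains on the typical event that $\E^{2,y}[Z_k\mid L_k]>0$ and $\E^{2,y}[Z_k^2\mid L_k]\le C_1\,\E^{2,y}[Z_k\mid L_k]^2$, uniformly in $y\in\partial B(0,2^k)$ (a Harnack comparison handles the dependence on $y$); Paley--Zygmund then gives $\P^{2,y}[Z_k>0\mid L_k]\ge 1/C_1=:p_0$. The atypical crossings (abnormal length or too much clustering) are collected into $\mathcal B_k$, and by the scale invariance of the underlying loop-erased walk estimates one may arrange $\P[\mathcal B_k]\le\epsilon_0$ with $p_0$ and $\epsilon_0$ independent of $k$ and $m$; weakening ``typical'' to make $\epsilon_0$ small costs only a smaller $p_0$.

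Globally I would combine two Markov properties. First, $\loe(R^1[0,T])$ can be grown from $0$ outward (domain Markov property of loop-erased walk), which gives a filtration $\mathcal F_a\subset\mathcal F_{a+1}\subset\cdots$ with $\mathcal F_{k-1}$ recording the loop-erased path inside $B(0,2^k)$, with respect to which $\P[\mathcal B_k\mid\mathcal F_{k-1}]\le\epsilon_0$; iterating the conditioning outward, any $j$ of the events $\mathcal B_k$ occur simultaneously with probability $\le\epsilon_0^{\,j}$, so
\[
\P\!\left[\#\{a\le k<b:\mathcal B_k\}\ge m/2\right]\;\le\;\binom{m}{\lceil m/2\rceil}\epsilon_0^{\,m/2}\;\le\;(2\sqrt{\epsilon_0})^m,
\]
which is $\le 2^{-Km}=(s/r)^K$ provided $\epsilon_0\le 2^{-2K-2}$; this is exactly the point at which an arbitrary $K$ is absorbed, by then choosing $p_0=p_0(\epsilon_0)$ accordingly. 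Second, on the complementary event at least $m/2$ scales are good, and applying the strong Markov property of $R^2$ successively at the times $T^2_{0,2^a},T^2_{0,2^{a+1}},\dots$ (which are increasing and $\le T^2_{0,r}$) multiplies the per-scale escape probabilities: conditionally on $R^1$,
\[
\P^{2,0}\!\left[R^2[0,T^2_{0,r}]\cap\loe(R^1[0,T])\cap\big(B(0,r)\setminus\overline{B(0,s)}\big)=\emptyset\right]\;\le\;(1-p_0)^{m/2}\;\le\;(s/r)^{\eta}
\]
for $\eta:=-\tfrac12\log_2(1-p_0)>0$. Intersecting the last two displays yields the lemma.

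The main obstacle is the single-scale estimate: unlike in the plane, where the Beurling projection principle makes \emph{every} crossing hittable, in three dimensions one must quantify the fatness of the loop-erased walk uniformly across scales, and the second-moment computation requires reasonably sharp one- and two-point functions for the intersection of an independent random walk with a loop-erased walk --- this is where the real work lies, and where one leans on earlier non-intersection estimates for independent loop-erased walk and simple random walk. A minor technical point is that $\loe(R^1[0,T])$ need not be consistent as $T$ varies, so one first reduces to a canonical choice of $T$ using the stability of loop-erasure under extending the walk; the reduction to dyadic $r,s$ and the handling of the two extreme scales $k\approx a$ and $k\approx b$ are routine.
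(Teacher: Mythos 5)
The lemma you are proving is quoted from Kozma; the paper does not reprove it, but it does prove the off-center analogue Lemma~\ref{l:4.6} by ``conceptually the same scheme'' (Claims~\ref{c:4.3}--\ref{c:4.6.1}), so that is the natural benchmark. Your high-level skeleton --- dyadic annuli, a per-scale hittability estimate with a small ``bad'' exceptional probability, a Markov-chain iteration to show that most scales are good, and a product of per-scale escape probabilities for $R^2$ --- does match the paper's scheme. However, there are two genuine gaps in the way you implement the per-scale step and the iteration, and both of them are precisely the points where Kozma's proof introduces its key device, which your proposal omits.

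First, the per-scale hittability. You propose to run a Paley--Zygmund argument directly on the loop-erased crossing $L_k$ of the annulus $A_k$, which requires one- and two-point functions for the intersection of an independent walk with a loop-erased path, uniformly over scales. Nothing of this sort is used (nor needed) in Kozma's proof, and it would be a substantial undertaking on its own. The device that makes the proof tractable is to work with the \emph{cut points} of the underlying random walk $R^1$ in the annulus: cut points are automatically on the loop erasure, and the hittability of the cut-point set by an independent walk is established by a second moment computation on the \emph{random walk} (Kozma's Lemma 4.2, used in the paper's Claim~\ref{c:4.4}), which only needs Green's function estimates for $R^1$ and $R^2$, not any structure of the loop-erased walk. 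Your single-scale estimate, as formulated, is a claim about the LERW for which there is no straightforward second-moment proof; the cut-point reduction is the missing idea.

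Second, the iteration. You posit a filtration $\mathcal F_{k-1}$ ``recording the loop-erased path inside $B(0,2^k)$'' and assert $\P[\mathcal B_k\mid\mathcal F_{k-1}]\le\epsilon_0$. This is not well-defined in the present setting: the lemma quantifies over an arbitrary $T$ (``there exists $T\ge 0$''), and for a fixed $T$ the portion of $\loe(R^1[0,T])$ inside $B(0,2^k)$ is not measurable with respect to $R^1$ stopped at any reasonable stopping time --- the future of $R^1$ may erase earlier crossings, and the loop erasure may re-enter $A_k$ from outside. You flag the $T$-dependence as ``a minor technical point,'' but it is in fact at the heart of the argument. Kozma (and the paper in Claim~\ref{c:4.5} and its use) resolves it by working with the \emph{random walk} filtration and carefully chosen stopping times $\tau_i$ at which the loop erasure last leaves $B(0,s_i)^c$; the per-scale bad events are then defined from $R^1[0,\tau_i]$, which is a legitimate stopped process. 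The cut-point device is what makes this consistent: once a cut point is formed it persists in the loop erasure regardless of how $T$ is extended, so the hittability conclusion drawn at the $i$-th stage is stable under the future evolution of $R^1$. Without this, the bound ``any $j$ of the $\mathcal B_k$ occur with probability $\le\epsilon_0^{\,j}$'' is not justified, and the whole multiscale iteration does not close. In short: the skeleton of your proposal is right, but both the single-scale estimate and the iteration rely implicitly on the cut-point reduction and the $\tau_i$ stopping-time construction, which are absent from your argument.
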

In the second case, we use an analogue of \cite[Lemma~4.6]{K} about hittability of the LERW in annuli that do not surround the origin. 
We give its proof in Section~\ref{sec:proof:l:4.6}. 
We will use this lemma also in Section~\ref{sec:simplepath} to show that the LERW scaling limit is a simple path. 
This is why we state here a slightly stronger result than we need for the proof of Theorem~\ref{thm:beurling}. 
We will comment more on this after stating the lemma. 
\begin{lemma}\label{l:4.6}
For any $K\geq 1$, there exist $\eta>0$ and $C<\infty$ such that for all $r>s>1$ and $v\notin B(0,r)$, 
\begin{equation}\label{eq:l:4.6}
\P^{1,0} \left[
\begin{array}{c}
\text{There exists $T\geq 0$ such that $\loe\left(R^1[0,T]\right)\nsubseteq B(v,s)^c$, and }\\[5pt]
\P^{2,v} \left[ R^2[0, T^2_{v,r}] \cap   \loe \left( R^1[0, T] \right)[0,\sigma]\cap \left(B(v,r)\setminus \overline{B(v,s)}\right)= \emptyset \right] > \left(\frac sr\right)^{\eta}
\end{array}
\right]
\leq C\, \left(\frac sr\right)^K,
\end{equation}
where $\sigma = \inf\{t\geq 0:\loe \left( R^1[0, T] \right)[0,t]\cap B(v,s)\neq\emptyset\}$.
\end{lemma}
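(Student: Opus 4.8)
The plan is to adapt the argument behind Kozma's hittability estimate, Lemma~\ref{l:kozma:4.6}, to an annulus that does not surround the origin, using the domain Markov property of the loop-erased random walk to make the neighborhood of $v$ locally indistinguishable from the situation Kozma treats. We may assume that $s/r$ is smaller than a fixed constant, since otherwise $C(s/r)^K\ge 1$ for $C$ large enough and the claim is vacuous.

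\textbf{Reduction of the quantifier and to a single crossing.} I would first replace ``there exists $T\ge 0$'' by a single loop erasure. Running $R^1$ until it exits a large ball $D_N$ with $N\gg r+|v|$, transience of the three-dimensional walk and stabilization of loop erasures allow one to bound the event in \eqref{eq:l:4.6}, up to an error tending to $0$ with $N$, by the event that the loop erasure $\gamma$ of $R^1$ run up to its first visit of $B(v,s)$ actually reaches $B(v,s)$ and that the portion of $\gamma$ up to that first visit is hard to hit from $v$. Next, writing $\sigma$ for the corresponding (loop-erased) time and $\tau$ for the start of the last excursion of $\gamma$ into $B(v,r)$ before $\sigma$, the segment $\gamma[\tau,\sigma]$ lies in $\overline{B(v,r)}$, crosses the annulus $\mathcal A:=B(v,r)\setminus\overline{B(v,s)}$, and is contained in $\gamma[0,\sigma]$; hence it suffices to hit $\gamma[\tau,\sigma]\cap\mathcal A$, and $R^2$ may be stopped already at $T^2_{v,\sqrt{rs}}$, which only makes the task harder.

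\textbf{Conditional law and multi-scale hittability.} Conditioning on $\gamma[0,\tau]=\beta$, the domain Markov property identifies $\gamma[\tau,\cdot]$ as a loop-erased walk from $\gamma(\tau)\in\partial B(v,r)$ to $\partial D_N$ in the slit domain $D_N\setminus\beta$, the slit meeting $B(v,r)$ only inside $\mathcal A$; in particular $\gamma[\tau,\sigma]$ is a loop-erased crossing of $\mathcal A$, which, since $s$ is much smaller than $r$ (so that $\mathcal A$ genuinely surrounds the small ball where the crossing ends), behaves near $v$ like the central LERW in Kozma's lemma, its orientation being immaterial. I would then run the core of Kozma's scheme: decompose $\mathcal A$ — or already the sub-annulus $B(v,\sqrt{rs})\setminus\overline{B(v,2s)}$, which suffices — into $\asymp\log(r/s)$ dyadic sub-annuli; at each scale, the second-moment argument of Kozma for the number of visits of $R^2$ (started from $v$) to the loop-erased crossing at that scale shows that, off an event of probability bounded away from $1$ uniformly in the scale, the crossing is hit with probability at least a fixed constant. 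Combining these per-scale estimates via the Markov property of $R^2$ between consecutive scales and the conditional independence of the LERW across scales given its coarse skeleton yields that $R^2$ avoids $\gamma[\tau,\sigma]\cap\mathcal A$ with probability at most $(s/r)^\eta$ outside an event whose probability, after controlling the exponentially rare number of ``bad'' scales, is at most $C(s/r)^K$ for the prescribed $K$.

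\textbf{Main obstacle.} The delicate point is running Kozma's second-moment/multi-scale scheme inside the slit domain $D_N\setminus\beta$, where $\beta$ is itself LERW-like but only crudely controlled. Two features should make this manageable: first, the actual target is $\gamma[0,\sigma]\cap\mathcal A\supseteq(\beta\cup\gamma[\tau,\sigma])\cap\mathcal A$, so the slit is part of what we want to hit rather than an obstruction; second, at any fixed dyadic scale the slit is, with good probability, far from and conditionally independent of the crossing strand, so it can spoil hittability at only a negligible fraction of scales — precisely the slack that the ``for every $K$'' in the statement is meant to absorb. A secondary nuisance is the bookkeeping in the first step that renders the ``there exists $T$'' quantifier harmless while keeping the conditioning on $R^1$ cleanly separated from the fresh randomness of $R^2$.
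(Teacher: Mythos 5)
Your outline correctly identifies the structure of the argument --- a multi-scale hittability estimate in the annulus $B(v,r)\setminus\overline{B(v,s)}$, with the conditioning on the already-erased path as the central obstacle --- but it does not supply the technical tool that makes this conditioning tractable, and that tool is exactly the paper's main new ingredient. The paper's proof does not pass through the LERW domain Markov property or a slit domain. It stays with the random walk $R^1$ itself and targets its cut points (which are automatically part of the loop erasure); the key new estimate is Claim~\ref{c:4.3}, a harmonic-function bound showing that a random walk started at $0$ in $B(0,n)$, conditioned to avoid a set $\Gamma\subset B(v,n)^c$ with $v\in\partial B(0,n)$, exits the ball at distance $\geq n/2$ from $B(v,n)^c$ with probability bounded below uniformly in $\Gamma$. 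That uniform control of the conditioned walk is what lets the per-scale hittability of cut points (Claim~\ref{c:4.4}, built on Kozma's Lemma~4.2) survive the conditioning on the accumulated partial loop erasure, which is precisely the point where Kozma's original argument is tied to annuli centered at the origin and does not transfer automatically.

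Two more concrete issues with the sketch as written. First, Kozma's hittability proof is not a second-moment calculation; it is a good/bad-event decomposition over dyadic scales, with the good event at each scale coming from a constructive estimate on cut points. Second, the LERW does not enjoy ``conditional independence across scales given its coarse skeleton'' in any form strong enough to deliver the $(s/r)^K$ bound; the paper circumvents the need for this by conditioning on the random walk trajectory $R^1[0,\tau_j]$ rather than on the partial loop erasure, and by formulating the per-scale event (Claim~\ref{c:4.5}) in terms of the walk and of $\loe_1,\loe_2$. Your observation that ``the slit is part of what we want to hit'' also does not resolve the obstruction: hittability at each scale must be certified by the freshly produced loop erasure at that scale, whose conditional law is distorted by the presence of the slit, and controlling that distortion is exactly what Claim~\ref{c:4.3} is for. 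Without an analogue of that estimate, the multi-scale iteration you propose cannot be closed.
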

As remarked above, the full strength of Lemma~\ref{l:4.6} will not be needed until Section~\ref{sec:simplepath}, 
where we reuse the lemma to prove that the LERW scaling limit is a simple path, see the proof of Claim~\ref{cl:ql:beurling}. 
In the proof of Theorem~\ref{thm:beurling} we will only apply a weaker version of \eqref{eq:l:4.6}, where
$\loe \left( R^1[0, T] \right)[0,\sigma]$ is replaced by $\loe \left( R^1[0, T] \right)$. 

\subsection{Proof of Theorem~\ref{thm:beurling}}\label{sec:proofofbeurling}

Without loss of generality, we may assume that $\varepsilon$ is small. 
The proof of Theorem~\ref{thm:beurling} is a simple consequence of Lemmas~\ref{l:kozma:4.6} and \ref{l:4.6}.
We estimate the probability in \eqref{eq:beurling} separately for $x$'s in and outside $B(0,\varepsilon n)$.
In the first case, we apply Lemma~\ref{l:kozma:4.6} to $T = T^1_{0,n}$, $s=2\varepsilon n$, $r = \frac12\sqrt{\varepsilon}n$, and $K = 2$, 
so that for some $\eta>0$ and $C<\infty$, 
\[
\P^{1,0} \left[
\P^{2,0} \left[ R^2[0, T^2_{0,\frac12\sqrt{\varepsilon} n}] \cap   \loe \left( R^1[0, T^1_{0,n} ] \right)
\cap\left(B(0,\frac12\sqrt{\varepsilon}n)\setminus\overline{B(0,2\varepsilon n})\right)= \emptyset \right] > \varepsilon^{\eta}
\right]
\leq C\varepsilon.
\]
By varying the starting point of $R^2$, we get the harmonic function in $B\left(0,2\varepsilon n\right)$,
\[
h(x)~:=~ \P^{2,x} \left[ R^2[0, T^2_{0,\frac12\sqrt{\varepsilon} n}] \cap   \loe \left( R^1[0, T^1_{0,n} ] \right)\cap \left(B(0,\frac12\sqrt{\varepsilon}n)\setminus\overline{B(0,2\varepsilon n})\right)= \emptyset \right].
\]
By the Harnack inequality \cite[Theorem~1.7.2]{L}, there exists a constant $C_H<\infty$ such that 
$h(x) \leq C_H\,h(0)$, for all $x\in B(0,\varepsilon n)$. In particular, 
\[
\P^{2,x} \left[ R^2[0, T^2_{0,\frac12\sqrt{\varepsilon} n}] \cap   \loe \left( R^1[0, T^1_{0,n} ] \right) = \emptyset \right]\leq C_H\, h(0).
\]
Since $B(x,\sqrt{\varepsilon}n) \supseteq B(0,\frac12\sqrt{\varepsilon}n)$ for all $x\in B(0,\varepsilon n)$, we also have 
\[
\P^{2,x} \left[ R^2[0, T^2_{x,\sqrt{\varepsilon} n}] \cap   \loe \left( R^1[0, T^1_{0,n} ] \right) = \emptyset \right]\leq C_H\, h(0).
\]
Plugging this into the very first inequality gives
\[
\P^{1,0} \left[
\begin{array}{c}
\text{For some $x\in B(0,\varepsilon n)$,}\\
\P^{2,x} \left[ 
R^2[0, T^2_{x,\sqrt{\varepsilon} n}] \cap   \loe \left( R^1[0, T^1_{0,n} ] \right) = \emptyset \right] > C_H\,\varepsilon^{\eta}
\end{array}
\right]
\leq C\varepsilon.
\]
This gives \eqref{eq:beurling} after slightly decreasing $\eta$. 

\medskip

It remains to consider the case $x\notin B(0,\varepsilon n)$. We prove that for some $\eta>0$ and $C<\infty$, 
\begin{equation}\label{eq:beurling:xnotin}
\P^{1,0} \left[
\begin{array}{c}
\text{ For some  $x \in B(0,n)\setminus \overline{B(0,\varepsilon n)}$ with $\mathrm{dist}\left( x, \loe \left( R^1[0, T^1_{0,n} ] \right) \right) \leq \varepsilon^2 n$,}\\[5pt]
\P^{2,x} \left[ R^2[0, T^2_{x,\varepsilon n}] \cap   \loe \left( R^1[0, T^1_{0,n} ] \right)= \emptyset \right] > \varepsilon^{\eta}
\end{array}
\right]
\leq C\varepsilon,
\end{equation}
which is slightly stronger than \eqref{eq:beurling}, since $T^2_{x,\sqrt{\varepsilon} n}$ of \eqref{eq:beurling} is replaced here by the smaller $T^2_{x,\varepsilon n}$. 
We start by covering $B(0,n)\setminus\overline{B(0,\varepsilon n)}$ by $s= 10\,\lfloor \varepsilon^{-6}\rfloor$
balls of radius $\varepsilon^2 n$ with centers at $v_1,\ldots, v_s\in B(0,n)\setminus\overline{B(0,\varepsilon n)}$. 
By the union bound, the right hand side of \eqref{eq:beurling:xnotin} is bounded from above by 
\[
\sum_{i=1}^s
\P^{1,0} \left[
\begin{array}{c}
\text{ There exists  $x \in B\left(v_i,\varepsilon^2 n\right)$ with $\mathrm{dist}\left( x, \loe \left( R^1[0, T^1_{0,n} ] \right) \right) \leq \varepsilon^2 n$,}\\[5pt]
\P^{2,x} \left[ R^2[0, T^2_{x,\varepsilon n}] \cap   \loe \left( R^1[0, T^1_{0,n} ] \right)= \emptyset \right] > \varepsilon^{\eta}
\end{array}
\right]
\]
For each $x\in  B\left(v_i,\varepsilon^2 n\right)$, 
$B(x,\varepsilon^2n)\subset B(v_i,2\varepsilon^2n)$ and 
$B(x,\varepsilon n)\supset B(v_i,\frac12\varepsilon n)$. 
Thus, the $i$th probability in the sum is at most 
\[
\P^{1,0} \left[
\begin{array}{c}
\text{$\loe \left( R^1[0, T^1_{0,n} ] \right)\nsubseteq B(v_i,2\varepsilon^2 n)^c$, and for some $x \in B\left(v_i,\varepsilon^2 n\right)$}\\[5pt]
\P^{2,x} \left[ R^2[0, T^2_{v_i,\frac12\varepsilon n}]\cap \loe \left( R^1[0, T^1_{0,n} ] \right)\cap
\left(B(v_i,\frac12\varepsilon n)\setminus \overline{B(v_i,2\varepsilon^2 n)}\right) = \emptyset \right] > \varepsilon^{\eta}
\end{array}
\right]
\]
By the Harnack inequality applied to the harmonic function
\[
\P^{2,x} \left[ R^2[0, T^2_{v_i,\frac12\varepsilon n}]\cap \loe \left( R^1[0, T^1_{0,n} ] \right)\cap
\left(B(v_i,\frac12\varepsilon n)\setminus \overline{B(v_i,2\varepsilon^2 n)}\right) = \emptyset \right], \quad
x \in B\left(v_i,2\varepsilon^2 n\right), 
\]
there exists a universal constant $c_H>0$ such that the $i$th probability is bounded from above by 
\[
\P^{1,0} \left[
\begin{array}{c}
\text{$\loe \left( R^1[0, T^1_{0,n} ] \right)\nsubseteq B(v_i,2\varepsilon^2 n)^c$, and}\\[5pt]
\P^{2,v_i} \left[ R^2[0, T^2_{v_i,\frac12\varepsilon n}]\cap \loe \left( R^1[0, T^1_{0,n} ] \right)\cap
\left(B(v_i,\frac12\varepsilon n)\setminus \overline{B(v_i,2\varepsilon^2 n)}\right) = \emptyset \right] > c_H\,\varepsilon^{\eta}
\end{array}
\right]
\]
Now, we apply Lemma~\ref{l:4.6} with $v=v_i$, $r = \frac12\varepsilon n$, $s=2\varepsilon^2n$, and $K=7$ 
to find $\eta>0$ and $C<\infty$ for which the above probability is $\leq C\varepsilon^7$. 
Thus, the probability from \eqref{eq:beurling:xnotin} is bounded from above by $(C\varepsilon^7)\cdot s \leq 10 C\,\varepsilon$. 
This proves \eqref{eq:beurling:xnotin} and completes the proof of Theorem~\ref{thm:beurling}.
\qed

\subsection{Proof of Lemma~\ref{l:4.6}}\label{sec:proof:l:4.6}

The scheme of the proof is conceptually the same as that of \cite[Lemma~4.6]{K}, 
except for the main improvement stated in Claim~\ref{c:4.3} below. 
For the reader's convenience and because of the importace of the result, we give a complete proof, 
which we organize in a sequence of claims. 
The first claim is a stronger version of \cite[Lemma~4.3]{K}, which is the first step in the proof of \cite[Lemma~4.6]{K}. 
This improvement is essentially the main reason why the remaining steps in the proof of \cite[Lemma~4.6]{K} can be adapted to our situation. 

\begin{claim}\label{c:4.3}
There exists $c_1>0$ such that for all $n>0$, $v\in\partial B(0,n)$, and $\Gamma\subset B(v,n)^c$, 
\[
\P^0\left[\mathrm{dist}\left(R(T_{0,n}), B(v,n)^c\right)\geq \frac n2 ~\Big|~ R[1,T_{0,n}]\cap \Gamma = \emptyset\right]\geq c_1.
\]
\end{claim}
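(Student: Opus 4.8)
The statement asks: if a random walk started at $0$ is conditioned to avoid some set $\Gamma \subset B(v,n)^c$ before exiting $B(0,n)$, then with probability bounded below it exits $B(0,n)$ at a point that is deep inside $B(v,n)$ (distance $\geq n/2$ from $B(v,n)^c$). Here $v \in \partial B(0,n)$, so $B(0,n)$ and $B(v,n)$ overlap substantially, and the "good" exit region — points of $\partial B(0,n)$ at distance $\geq n/2$ from $B(v,n)^c$ — is a macroscopic piece of the sphere sitting well inside $B(v,n)$, hence at bounded-ratio distance from $v$ and from $\partial B(0,n)$ geometrically. Let me lay out how I'd prove it.

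**Step 1: reduce to a statement about the unconditioned walk with a barrier.** I would first set up the conditioning carefully. Write $A = \{R[1,T_{0,n}] \cap \Gamma = \emptyset\}$ and $E = \{\mathrm{dist}(R(T_{0,n}), B(v,n)^c) \geq n/2\}$. The point is that $\Gamma$ lives outside $B(v,n)$, so it is "on the far side" relative to the good exit region. I'd like to say: a walk that stays inside $B(v,n)$ up to time $T_{0,n}$ automatically satisfies $A$, and exits through the part of $\partial B(0,n)$ that lies strictly inside $B(v,n)$. So it suffices to show
\[
\P^0\left[\, R[1,T_{0,n}] \subset B(v,n) \text{ and } \mathrm{dist}(R(T_{0,n}),\partial_{\R^d}B(v,n)) \geq \tfrac n2 \,\right] \;\geq\; c\,\P^0[A],
\]
because the left event is contained in $A \cap E$. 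The numerator here is a positive-probability event for the unconditioned walk by a standard "tube/barrier" estimate (the walk can be forced to travel through a suitable corridor inside $B(0,n) \cap B(v,n)$ and exit at a prescribed macroscopic region — this is elementary for simple random walk, e.g. via repeated use of the gambler's ruin / Harnack-type bounds, and gives a constant depending only on $d$). The real content is bounding this constant against $\P^0[A]$ from below, i.e. showing $\P^0[A]$ is not too small — but $\P^0[A]$ could in principle be small if $\Gamma$ is a huge set hugging $\partial B(v,n)$ from outside.

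**Step 2: handle the conditioning via a last-exit / strong-Markov decomposition.** The cleanest route, and the one I'd expect to be the main obstacle, is to avoid lower-bounding $\P^0[A]$ directly and instead decompose the walk at its last visit to $\overline{B(v,n/4)}$ (say) before $T_{0,n}$, or better, argue as follows. Condition on the first time $\tau$ the walk reaches $\partial B(v, n/2)$ from inside (if it never does, the walk stayed in $B(v,n/2)$, which forces a good exit and lies in $A \cap E$, so we're fine on that piece). Given $R(\tau) = w$ with $w \in \partial B(v,n/2)$, the remaining walk is a simple random walk from $w$; I want to compare the conditional probability of $E$ given $A$ started from such a $w$. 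By the Harnack inequality applied to the harmonic function $x \mapsto \P^x[A \cap (\text{exit } B(0,n)) \cap \cdots]$... Actually the slicker approach: use the standard fact that conditioning a random walk to avoid a set $\Gamma$ before exiting a ball, the conditioned process is again a (Doob $h$-transformed) Markov chain, and the harmonic function $h(x) = \P^x[A]$ is comparable on $B(v,n/4)$ up to a dimensional constant by Harnack (since $B(v,n/4)$ stays at distance $\gtrsim n$ from $\Gamma$). So under the conditioned law, starting the relevant excursion from a point of $B(v,n/4)$, the chance of then exiting $B(0,n)$ deep inside $B(v,n)$ — which only requires a barrier event staying inside $B(v,n)$, a region far from $\Gamma$ — is bounded below by a constant. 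Finally one shows the conditioned walk reaches $B(v,n/4)$ with positive probability: indeed the walk starts at $0 \in \partial B(v,n)$... wait, $0$ is at distance $n$ from $v$, so $0 \in \partial B(v,n)$, which is delicate. I would instead run the walk until it first hits $\partial B(0, n/2)$; on the (constant-probability, even after conditioning, by the same Harnack comparison) event that it does so at a point inside $B(v, 3n/4)$, restart there.

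**Step 3: assemble.** Combining: with probability $\geq c$ under $\P^0[\,\cdot \mid A]$ the walk reaches a point $w \in \partial B(0,n/2) \cap B(v,3n/4)$; from $w$, still under the conditioning, with probability $\geq c'$ it then exits $B(0,n)$ through $\{x \in \partial B(0,n) : \mathrm{dist}(x, B(v,n)^c) \geq n/2\}$ by following a corridor that never leaves $B(v,n)$ (hence never meets $\Gamma$, so the conditioning is harmless for this last leg up to a Harnack factor). Multiplying gives the claimed $c_1$. The one genuinely delicate point, as I said, is making the "conditioning is harmless away from $\Gamma$" step rigorous; the correct tool is that $x \mapsto \P^x[R[0, T_{0,n}] \cap \Gamma = \emptyset]$ is positive and harmonic off $\Gamma$ inside $B(0,n)$, hence satisfies Harnack inequalities with dimensional constants on any sub-ball staying distance $\gtrsim n$ from $\Gamma \cup \partial B(0,n)$, and this lets all barrier-type lower bounds pass to the conditioned walk. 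I expect the write-up to localize the argument to two or three such Harnack applications plus one explicit barrier estimate.
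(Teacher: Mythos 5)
The core difficulty in this claim is that $\P^0[A]$ can be as small as order $n^{-1}$: for instance, when $\Gamma$ fills all of $B(0,n)\setminus B(v,n)$, the walk from $0$ (which sits essentially on $\partial B(v,n)$, since $|v|\approx n$) must stay inside the lens $B(v,n)\cap B(0,n)$ for its whole lifetime, a boundary-hugging event of probability $\asymp n^{-1}$. So one cannot combine a barrier lower bound on $\P^0[A\cap E]$ with the trivial $\P^0[A]\le 1$, and you correctly flag this at the end of Step~1. Your fix is a two-stage decomposition plus Harnack, but the first stage rests on the assertion that under $\P^0[\,\cdot\mid A]$ the walk hits $\partial B(0,n/2)\cap B(v,3n/4)$ with uniformly positive probability, justified ``by the same Harnack comparison.'' That is where the argument has a genuine gap. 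Harnack applied to $g(x)=\P^x[A]$ gives comparability of the \emph{values} of $g$ across a ball that stays far from $\Gamma\cup\partial B(0,n)$; it says nothing about where the $g$-Doob-transformed walk actually goes. Local comparability of $g$ near $v$ is entirely consistent with the conditioned walk never reaching that neighborhood. What is needed is a quantitative drift-toward-$v$ statement for the conditioned process, and Harnack alone does not supply it. (A secondary problem: most of $B(v,n/4)$ lies outside $B(0,n)$, and its intersection with $B(0,n)$ abuts $\partial B(0,n)$, so Harnack for $g$ does not apply on it in any case.)

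The paper closes precisely this gap with a device your proposal lacks: a biased harmonic ``score'' function. It takes $h$ to be the Brownian harmonic measure, in the unit ball, of a small spherical cap centered at $u=v/|v|$, and records four facts: $h(0)>0$; the directional derivative $\partial h/\partial u(0)=\nu>0$; $h(z)\le h(0)$ for every $z$ in the unit ball but outside the translated unit ball centered at $u$; and $h(z)\le\tfrac14 h(0)$ at macroscopic distance from $u$. After discretizing to $\widetilde h_n$ (discrete-harmonic, within $O(1/n)$ of $h(\cdot/n)$), one moves the starting point a bounded number of lattice steps toward $v$ to a point $x$; the gradient property plus the discretization error give $\widetilde h_n(x)\ge\widetilde h_n(z)$ for all $z\in\Gamma\cap B(0,rn)$. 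This is exactly what makes the optional-stopping inequality (imported from Masson's argument) bite: since $\widetilde h_n$ is a martingale along the walk and is no larger at $\Gamma$ than at the start $x$, conditioning on $\{T_{0,rn}<T_\Gamma\}$ can only \emph{increase} the expected exit value of $\widetilde h_n$, giving $\E^x[\widetilde h_n(R(T_{0,rn}))\mid A]\ge\widetilde h_n(x)\ge\tfrac12 h(0)$. The fourth property then forces a constant fraction of the conditional exit mass to lie within distance $n/2$ of $v$. This ``gradient-toward-$v$ harmonic function plus martingale monotonicity under conditioning'' is the content your Harnack/barrier reasoning does not replace; without it the constant-probability claim for the intermediate hitting event has no proof.
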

\begin{proof}[Proof of Claim~\ref{c:4.3}]
An analogous claim for the random walk on $\Z^2$ is proved in \cite[Proposition~3.5]{M09}. 
The same scheme works for $\Z^3$ with a slightly more involved analysis of the corresponding harmonic function. 

\medskip

We begin with some auxiliary observations in $\R^3$. 
For $z\in \R^3$, let $\ballR(z)$ be the unit ball in $\R^3$ centered at $z$, and write $\ballR$ for $\ballR(0)$. 
Let $u\in\partial_{\R^3}\ballR$, $\delta>0$, and $M = \{z\in\partial_{\R^3}\ballR:|z-u|\leq \delta\}$. 
For $z\in\ballR$, let $h(z) = \mathbf P^z[W(\tau_\ballR)\in M]$, where $W$ is the standard Brownian motion in $\R^3$ 
and $\tau_\ballR$ is the first hitting time of $\partial_{\R^3}\ballR$ by $W$. Then $h$ is a harmonic function in $\ballR$ with 
the boundary condition $\mathds{1}_M$. In particular, it can be written as 
\[
h(z) = \frac{1}{4\pi}\int_M\frac{1 - |z|^2}{|z-\sigma|^3}d\sigma.
\]
We will need the following properties of $h$.
\begin{itemize}
\item
If $\delta$ is small enough, then for all $z\in \ballR\setminus\ballR(u)$, $h(z)\leq h(0)$. 
\begin{proof}
By the maximum principle, it suffices to consider $z\in\partial\ballR(u)\cap\ballR$. 
By the symmetry, it suffices to prove the claim for $u = (1,0,0)$ and $z = (z_1,z_2,0)$. 
Using geometric constraints, one can express $h(z)$ as a function of $z_2$ only,
\[
h(z) = f(z_2) = \frac{1}{4\pi}\int_M\frac{2\sqrt{1-z_2^2} - 1}{\left(3-2\sigma_1 + 2(\sigma_1 - 1)\sqrt{1-z_2^2} - 2\sigma_2z_2\right)^{\frac32}}d\sigma, \qquad z\in\partial\ballR(u)\cap\ballR.
\]
One can show by a direct computation that $z_2f'(z_2)\leq 0$ if $|\sigma_2|$ is sufficiently small, which proves the claim.  
\end{proof}
\item
Another direct computation gives $\frac{\partial h}{\partial u}(0) = \nu>0$ (derivative in the direction $u$) and 
$\frac{\partial h}{\partial u'} = 0$ for any $u'$ orthogonal to $u$. 
\item
There exists $r\in(0,1)$ such that for all $\delta\in(0,\frac14)$ and $r\leq |z|<1$ with $|z-u|\geq \frac12$, $h(z)\leq \frac14 h(0)$. 
This follows from the bound $\frac{1 - |z|^2}{|z-\sigma|^3}\leq 4^3(1-r^2)$.
\end{itemize}
Assume that $n$ is large enough so that $\overline{B(0,rn)}\subset n\ballR$. The function $h_n(z) = h(\frac zn)$ is harmonic in $n\ballR$. 
For $z\in\overline{B(0,rn)}$, let $\widetilde h_n(z) = \mathbb E^z[h_n(R(T_{0,rn}))]$ be the discrete harmonic function in $B(0,rn)$ which agrees with $h_n(z)$ on $\partial B(0,rn)$. 
By \cite[(1.23) and (1.34)]{L}, there exists $C<\infty$ such that for all $z\in B(0,rn)$, $|h_n(z) - \widetilde h_n(z)|\leq \frac{C}{n}$. 

\medskip

We proceed with the proof of the claim. 
Let $n\geq 1$ and $v\in \partial B(0,n)$. We choose $u = \frac{v}{|v|}\in\partial_{\R^3}\ballR$. 
Let $A=\frac{4C}{\nu}$ (with $\nu$ and $C$ as above) and $x\in B(0,rn)$ be such that $x_i = \lfloor Au_i\rfloor$. 
By Taylor's theorem, $h_n(x) - h_n(0)\geq \frac{A\nu}{2n}$ for large $n$. Thus, for any $z\in B(0,rn)\setminus B(v,n)$, 
\begin{eqnarray*}
\widetilde h_n(x) - \widetilde h_n(z)
&=&[\widetilde h_n(x) - h_n(x)] + [h_n(x) - h_n(0)] + [h_n(0) - h_n(z)] + [h_n(z) - \widetilde h_n(z)]\\
&\geq &-\frac{C}{n} + \frac{A\nu}{2n} + 0 - \frac{C}{n} \geq 0.
\end{eqnarray*}
Since $\Gamma\subset B(v,n)^c$, the same calculation as on \cite[page 1032]{M09} gives
\begin{eqnarray*}
\E^x[h_n(R(T_{0,rn}))~|~R[0,T_{0,rn}]\cap\Gamma=\emptyset]&\geq &\mathbb E^x[h_n(R(T_{0,rn}))]\\
&=&\widetilde h_n(x)\geq \widetilde h_n(0) \geq h(0) - \frac{C}{n} \geq \frac12 h(0).
\end{eqnarray*}
By splitting the above probability into the terms where $|R(T_{0,rn}) - v|$ is $\geq\frac{n}{2}$ and $<\frac{n}{2}$, and 
estimating $h_n(R(T_{0,rn}))$ from above by $\frac14 h(0)$ in the first case and by $1$ in the second,  
one gets exactly as on \cite[page 1033]{M09} that 
\[
\P^x\left[|R(T_{0,rn}) - v|\leq\frac{n}{2}~\big|~R[1,T_{0,rn}]\cap\Gamma=\emptyset\right]\geq c>0,
\]
which implies that $\P^0\left[|R(T_{0,n}) - v|\leq\frac{n}{2}~\big|~R[1,T_{0,n}]\cap\Gamma=\emptyset\right]\geq c'>0$. 
The proof of the claim is complete. 
\end{proof}

\medskip

Before we state the next claim, we introduce some notation. 
For a path $\gamma$ and $t\geq 1$, we define the set of cut points of $\gamma$ up to time $t$,
\[
\mathrm{cut}(\gamma;t) = \{\gamma(i)~:~i<t,~\gamma[1,i]\cap\gamma[i+1,\mathrm{len}\,\gamma] = \emptyset\}.
\]
Note that $\mathrm{cut}(\gamma;t)$ is non-decreasing in $t$, and non-increasing as $\gamma$ is extended. 
Also note that $\loe(\gamma)[1,\mathrm{len}\,\loe(\gamma)]\supset\mathrm{cut}(\gamma;\mathrm{len}\,\gamma)$. 

The following claim is an analogue of \cite[Lemma~4.4]{K}. 
\begin{claim}\label{c:4.4}
There exists $q>0$ such that the following holds. 
For any $\varepsilon>0$ there exist $\delta = \delta(\varepsilon)>0$ and $C = C(\varepsilon)<\infty$ such that 
for all $r>C$, $s\in[r(1+\varepsilon),2r]$, $\Gamma\subset B(0,s)^c$ with 
\[
\P^0\left[R[0,T_{0,4r}]\cap\Gamma\neq\emptyset\right]<\delta, 
\]
and $v\in\partial B(0,s)$, 
\[
\P^{1,v}\left[
\begin{array}{c}
\text{For all $y\in B(v,\varepsilon r)$,}\\[3pt]
\P^{2,y}\left[\mathrm{cut}\left(R^1[0,+\infty);T^1_{v,\varepsilon r}\right)\cap R^2[0,T^2_{0,4r}]\neq\emptyset\right]\geq q 
\end{array}
~\Big|~R^1[1,T_{0,4r}]\cap\Gamma=\emptyset
\right] \geq q.
\]
\end{claim}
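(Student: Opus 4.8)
The plan is to combine Claim~\ref{c:4.3} with a second-moment / Harnack argument in the spirit of \cite[Lemma~4.4]{K}, but using the improved geometric input of Claim~\ref{c:4.3} to replace the two-dimensional Beurling estimate used there. First I would reduce to a statement about a single starting point $y$. By the Harnack inequality applied to the function $y\mapsto \P^{2,y}[\mathrm{cut}(R^1[0,\infty);T^1_{v,\varepsilon r})\cap R^2[0,T^2_{0,4r}]\neq\emptyset]$, which is harmonic in $B(v,\varepsilon r)\setminus\mathrm{cut}(\cdots)$, it suffices to establish the bound for $y=v$ with a possibly smaller constant $q$; more precisely, the event ``for all $y\in B(v,\varepsilon r)$ this probability is $\geq q$'' will follow once we know it for a single well-chosen $y$ near $v$ (e.g. $y=v$) with constant $q/c_H$, after which we relabel $q$. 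The $R^1$-conditioning on $\{R^1[1,T_{0,4r}]\cap\Gamma=\emptyset\}$ is kept fixed throughout, and by the smallness hypothesis $\P^0[R[0,T_{0,4r}]\cap\Gamma\neq\emptyset]<\delta$ together with Claim~\ref{c:4.3} (applied with the ball $B(v,\varepsilon r)$, or rather $B(v,s-r)\supset B(v,\varepsilon r)$ suitably rescaled) the conditioning distorts the relevant $R^1$-probabilities only by a bounded factor.

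Next I would produce a positive-probability event for $R^1$ on which $\mathrm{cut}(R^1[0,\infty);T^1_{v,\varepsilon r})$ contains a ``good'' portion of a simple path crossing the annulus $B(v,\varepsilon r)\setminus B(v,\tfrac12\varepsilon r)$. Here is where Claim~\ref{c:4.3} enters: it guarantees that, conditionally on avoiding $\Gamma$, the walk $R^1$ started at $v$ exits $B(v,\varepsilon r)$ at a point a distance $\geq \tfrac12\varepsilon r$ from $B(v,\varepsilon r)^c$ with probability $\geq c_1$ — i.e. it genuinely penetrates the annulus rather than skimming its boundary. Combining this with standard estimates that a three-dimensional random walk segment of length of order $(\varepsilon r)^2$ has a macroscopic subsegment consisting of cut points with probability bounded below (this is the classical fact that the expected number of cut points in a walk of $n$ steps is of order $n^{2-\xi}$ with a matching lower-bound on the probability of having order-$n^{2-\xi}$ of them, see \cite{L95}), and that these cut points survive the loop-erasure and the extension of $R^1$ to $[0,\infty)$ (by definition of $\mathrm{cut}$ and the remark after its definition), we get that with $R^1$-probability $\geq q_1>0$ the set $\mathrm{cut}(R^1[0,\infty);T^1_{v,\varepsilon r})$ contains a connected ``tube-crossing'' piece $\Sigma$ of the annulus. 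On this event we then invoke the three-dimensional Beurling-type hittability of such a crossing piece by the independent walk $R^2$: a random walk started at $v$ (or any $y\in B(v,\varepsilon r)$) and run until $T^2_{0,4r}$ must cross the annulus $B(v,\varepsilon r)\setminus B(v,\tfrac12\varepsilon r)$, and a crossing of an annulus hits any connected set that also crosses it with probability bounded below — this is a soft topological/Beurling estimate in $\R^3$ that does hold for crossing sets (unlike for arbitrary sets), and can be quoted from \cite[Lemma~4.6]{K}-type reasoning or proved directly via Claim~\ref{c:4.3}. This yields $\P^{2,y}[\mathrm{cut}(\cdots)\cap R^2[0,T^2_{0,4r}]\neq\emptyset]\geq q_2>0$ on the good $R^1$-event, and then one sets $q=\min(q_1,q_2)/(2c_H)$ (and restricts to large $r$, i.e. chooses $C=C(\varepsilon)$, so all the asymptotic estimates above are in force, and $\delta=\delta(\varepsilon)$ small enough that the $\Gamma$-conditioning costs at most a factor $2$).

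The main obstacle I expect is the step producing the crossing piece $\Sigma$ of cut points with probability bounded below independently of $r$ (for $r$ large): one needs a lower bound, not just an expectation estimate, on the event that a walk has \emph{many} cut points forming a connected crossing of the specified annulus, and one must check this is not destroyed by (i) the loop-erasure (harmless, since cut points of $\gamma$ always lie on $\loe(\gamma)$ and, as noted after the definition of $\mathrm{cut}$, $\mathrm{cut}(\gamma;t)$ is non-increasing as $\gamma$ is extended, so $\mathrm{cut}(R^1[0,\infty);T^1_{v,\varepsilon r})$ is genuinely a set of cut points of the infinite walk), (ii) the conditioning on $\{R^1[1,T_{0,4r}]\cap\Gamma=\emptyset\}$, and (iii) the fact that $R^1$ starts on $\partial B(0,s)$ rather than at the center $v$ of the small ball — the last point is exactly what Claim~\ref{c:4.3} is designed to handle, since it forces the conditioned walk to reach deep into $B(v,\varepsilon r)$ with uniformly positive probability, after which one can restart and apply the unconditioned cut-point lower bound. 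The remaining estimates (Harnack, annulus-crossing hittability of a crossing set, the $\delta$-vs-$4r$ bookkeeping) are routine and parallel \cite[Lemma~4.4]{K}.
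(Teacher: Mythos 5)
Your proposal diverges substantially from the paper's argument, and two of its central steps are not correct.

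First, the key input the paper uses here is \cite[Lemma~4.2]{K}, imported as a black box: it says that there is $c>0$ such that for all $x\in\partial B(v,\rho)$, with $R^1$-probability $\geq c$, the cut-point set $\mathcal C\cap A$ (where $A=B(v,\tfrac12\varepsilon r)\setminus\overline{B(v,\tfrac14\varepsilon r)}$) is hit by $R^2$ from \emph{every} $y\in B(v,\tfrac18\varepsilon r)$ with probability $\geq c$. This is a hard quantitative fact about cut points of a 3D random walk; Kozma proves it by a second-moment argument tailored to the cut-point structure. Your proposal tries to re-derive something of this flavor from (a) the expected-number-of-cut-points exponent $2-\xi$ and (b) a ``soft topological/Beurling estimate'' that a connected crossing of an annulus in $\R^3$ is hit with positive probability by an independent walk crossing the same annulus. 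Step (b) is false: a straight line segment crosses the annulus and is connected, but a 3D random walk avoids it with probability tending to $1$ as the scale grows (this is exactly the obstruction the paper dwells on in the introduction). Step (a) is also problematic as you use it, because the set of cut points of a 3D walk in an annulus is a disconnected fractal set (dimension $2-\xi<1$); it does not contain a ``connected tube-crossing piece $\Sigma$,'' so there is no crossing set to feed into (b) even if (b) were true.

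Second, the scale-separation device you are missing is what makes the monotonicity work in the right direction. You note correctly that $\mathrm{cut}(\gamma;t)$ is non-increasing as $\gamma$ is extended, but then need the cut points of the \emph{infinite} walk to be abundant in the annulus — which is the unfavorable direction. The paper handles this by introducing an intermediate radius $\rho=\mu r$ with $\mu=\varepsilon/(4\lambda)$, $\lambda$ large, and observing that if $R^1$ does not re-enter $B(v,\rho)$ after leaving $B(v,\tfrac14\varepsilon r)$ (probability of re-entry $\leq C_3/\lambda$, tunably small), then $\mathrm{cut}(R^1[0,\infty);T^1_{v,\varepsilon r})\cap A = \mathrm{cut}(R^1[T^1_{v,\rho},\infty);T^1_{v,\varepsilon r})\cap A$, so one may apply \cite[Lemma~4.2]{K} from the exit point $R^1(T^1_{v,\rho})$. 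Claim~\ref{c:4.3} is then applied precisely at the scale $\rho$ (not $\varepsilon r$ as you suggest), to show the conditioned walk exits $B(v,\rho)$ at a point $\geq\rho/2$ away from $B(0,s)^c$ with probability $\geq c_1$; the Harnack inequality you invoke is used for a different purpose — to show that from such a good exit point the $\Gamma$-hitting probability is still $\leq C_4\delta$, so the conditioning may be removed at the cost of $C_3/\lambda + C_4\delta$, both made $<c_2/4$. Your version of the Harnack reduction (applied to $y\mapsto\P^{2,y}[\mathcal C\cap R^2\neq\emptyset]$ on $B(v,\varepsilon r)$) is also not valid as stated, since that function is not harmonic on the whole ball ($\mathcal C\subset B(v,\varepsilon r)$ removes part of the domain); the paper instead gets the ``for all $y\in B(v,\varepsilon r)$'' uniformity for free from \cite[Lemma~4.2]{K} on the smaller ball $B(v,\tfrac18\varepsilon r)$, and then extends by the trivial fact that a walk from $y\in B(v,\varepsilon r)$ hits $B(v,\tfrac18\varepsilon r)$ before $T_{0,4r}$ with probability bounded below.

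In short: you have the right high-level picture (use Claim~\ref{c:4.3} to steer the conditioned walk into the bulk, then argue hittability of cut points), but the hittability step cannot be soft — it must be quoted from \cite[Lemma~4.2]{K} — and the monotonicity of $\mathrm{cut}$ under extension forces the two-scale $\rho$-vs-$\varepsilon r$ decoupling, which your argument lacks.
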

\begin{proof}[Proof of Claim~\ref{c:4.4}]
Let $v\in\partial B(0,s)$, and define 
$\mathcal C = \mathrm{cut}\left(R^1[0,+\infty);T^1_{v,\varepsilon r}\right)$ and $A = B(v,\frac12\varepsilon r)\setminus\overline{B(v,\frac14\varepsilon r)}$. 
Let $\lambda>2$ to be fixed later, and take $\mu = \frac{\varepsilon}{4\lambda}$ and $\rho = \mu r$. 
By \cite[Lemma~4.2]{K}, there exists $c>0$ such that for all $x\in \partial B(v,\rho)$, 
\[
\P^{1,x}\left[\text{For all $y\in B(v,\frac18\varepsilon r)$,}\,\,
\P^{2,y}\left[\mathcal C\cap R^2[0,T^2_{0,4r}]\cap A\neq\emptyset\right]\geq c
\right]\geq c.
\]
Since the random walk started from any $y\in B(v,\varepsilon r)$ will hit $B(v,\frac18\varepsilon r)$ before exiting from $B(0,4r)$ with probability $>c'$, 
the previous inequality also holds for all $y\in B(v,\varepsilon r)$. Namely, 
there exists $c_2>0$ such that for all $x\in \partial B(v,\rho)$, 
\begin{equation}\label{eq:4.4:4.2}
\P^{1,x}\left[\text{For all $y\in B\left(v,\varepsilon r\right)$,}\,\,
\P^{2,y}\left[\mathcal C\cap R^2[0,T^2_{0,4r}]\cap A\neq\emptyset\right]\geq c_2
\right]\geq c_2.
\end{equation}
By \cite[Proposition~1.5.10]{L}, for any $z\in\partial B(v,\frac14\varepsilon r)$, 
\[
\P^{1,z}\left[T^1_{v,\rho}<\infty\right]\leq C'\,\frac{\rho}{\varepsilon r} = \frac{C_3}{\lambda}.
\]
Thus, 
\begin{equation}\label{eq:4.4:lambda}
\P^{1,v}\left[R^1[T^1_{v,\frac14\varepsilon r},+\infty)\cap B(v,\rho) \neq \emptyset\right]\leq \frac{C_3}{\lambda}. 
\end{equation}
Note that if the random walk $R^1$ started from $v$ does not return to $B(v,\rho)$ after $T^1_{v,\frac14\varepsilon r}$, 
then 
\begin{equation}\label{eq:4.4:cut}
\mathrm{cut}(R^1[0,+\infty);T^1_{v,\varepsilon r})\cap A = \mathrm{cut}(R^1[T^1_{v,\rho},+\infty);T^1_{v,\varepsilon r})\cap A.
\end{equation}
Denote by $M$ the set of points on $\partial B(v,\rho)$ which are at distance $\geq \frac{\rho}{2}$ from $B(0,s)^c$. 
By Claim~\ref{c:4.3}, 
\begin{equation}\label{eq:4.4:4.3}
\P^{1,v}\left[R^1(T^1_{v,\rho})\in M ~\Big|~ R^1[1,T^1_{v,\rho}]\cap \Gamma = \emptyset\right]\geq c_1.
\end{equation}
By the Harnack inequality applied to the harmonic function
\[
\P^{1,x}\left[R^1[0,T^1_{0,4r}]\cap\Gamma\neq\emptyset\right],\qquad x\in B(0,s-\frac{\rho}{2}),
\]
and the assumption on $\Gamma$, there exists $C_4=C_4(\varepsilon,\lambda)<\infty$ such that for any $x\in M$, 
\begin{equation}\label{eq:4.4:M:Gamma}
\P^{1,x}\left[R^1[0,T^1_{0,4r}]\cap\Gamma\neq\emptyset\right]\leq C_4\delta.
\end{equation}
All the ingredients are ready to conclude. We have
\begin{multline*}
\P^{1,v}\left[\text{For all $y\in B(v,\varepsilon r)$,}\,\,
\P^{2,y}\left[\mathcal C\cap R^2[0,T^2_{0,4r}]\neq\emptyset\right]\geq c_2~\Big|~
R^1[1,T^1_{0,4r}]\cap\Gamma=\emptyset
\right] \\
\geq
\frac{\P^{1,v}\left[
\begin{array}{c}
\text{For all $y\in B(v,\varepsilon r)$,}\,\,\P^{2,y}\left[\mathcal C\cap R^2[0,T^2_{0,4r}]\cap A\neq\emptyset\right]\geq c_2,\\[3pt]
R^1[1,T^1_{0,4r}]\cap\Gamma=\emptyset,~R^1(T^1_{v,\rho})\in M,~R^1[T^1_{v,\frac14\varepsilon r},+\infty)\cap B(v,\rho) = \emptyset
\end{array}\right]}
{\P^{1,v}\left[R^1[1,T^1_{v,\rho}]\cap \Gamma = \emptyset\right]}.
\end{multline*}
By the strong Markov property for $R^1$ at time $T^1_{v,\rho}$ and the identity \eqref{eq:4.4:cut}, 
the nominator of the above expression is bounded from below by 
\begin{multline*}
\P^{1,v}\left[R^1[1,T^1_{v,\rho}]\cap\Gamma=\emptyset,~R^1(T^1_{v,\rho})\in M\right]\\
\cdot \min_{x\in M}\,
\P^{1,x}\left[
\begin{array}{c}
\text{For all $y\in B(v,\varepsilon r)$,}\,\,\P^{2,y}\left[\mathcal C\cap R^2[0,T^2_{0,4r}]\neq\emptyset\right]\geq c_2,\\[3pt]
R^1[1,T^1_{0,4r}]\cap\Gamma=\emptyset,~R^1[T^1_{v,\frac14\varepsilon r},+\infty)\cap B(v,\rho) = \emptyset
\end{array}\right]
\end{multline*}
By \eqref{eq:4.4:4.2}, \eqref{eq:4.4:lambda}, and \eqref{eq:4.4:M:Gamma}, the above display is 
\[
\geq \left(c_2 - \frac{C_3}{\lambda} - C_4\delta\right)\cdot \P^{1,v}\left[R^1[1,T^1_{v,\rho}]\cap\Gamma=\emptyset,~R^1(T^1_{v,\rho})\in M\right],
\]
which implies that 
\begin{multline*}
\P^{1,v}\left[\text{For all $y\in B(v,\varepsilon r)$,}\,\,
\P^{2,y}\left[\mathcal C\cap R^2[0,T^2_{0,4r}]\neq\emptyset\right]\geq c_2~\Big|~
R^1[1,T^1_{0,4r}]\cap\Gamma=\emptyset
\right] \\
\geq
\left(c_2 - \frac{C_3}{\lambda} - C_4\delta\right)\cdot \P^{1,v}\left[R^1(T^1_{v,\rho})\in M~\Big|~R^1[1,T^1_{v,\rho}]\cap\Gamma=\emptyset\right]\\
\geq c_1\cdot \left(c_2 - \frac{C_3}{\lambda} - C_4\delta\right),
\end{multline*}
where the last inequality follows from \eqref{eq:4.4:4.3}. 
Finally we make a choice of parameters. We choose $\lambda$ so that $\frac{C_3}{\lambda}<\frac{c_2}{4}$. 
Then we choose $\delta$ so that $C_4\delta < \frac{c_2}{4}$ and $q = \frac{c_1c_2}{2}$. 
The proof of Claim~\ref{c:4.4} is complete. 
\end{proof}

\medskip

To state the next claim, we need more notation. 
A function $\gamma:\{1,\ldots,n\}\to \Z^3$ is called a discontinuous path of length $n$. 
All the definitions that we introduced for nearest neighbor paths extend without any changes to discontinuous paths. 
Given two discontinuous paths $\gamma_1$ and $\gamma_2$, we define the discontinuous paths $\loe_1(\gamma_1\cup\gamma_2)$ and 
$\loe_2(\gamma_1\cup\gamma_2)$ as follows. Let $t_* = \max\{t:\loe(\gamma_1)[1,t-1]\cap\gamma_2=\emptyset\}$. Then 
\[
\begin{array}{rll}
\loe_1(\gamma_1\cup\gamma_2) &= &\loe(\gamma_1\cup\gamma_2)[1,t_*],\\[3pt]
\loe_2(\gamma_1\cup\gamma_2) &= &\loe(\gamma_1\cup\gamma_2)\left[t_*+1,\mathrm{len}\,\loe(\gamma_1\cup\gamma_2)\right].
\end{array}
\]
The next claim is an analogue of \cite[Lemma~4.5]{K}.
\begin{claim}\label{c:4.5}
For any $\varepsilon>0$ and $\eta>0$, there exist $\delta>0$ and $C<\infty$ such that the following holds. 
For $r>0$, let $A_1 = B(0,2r)\setminus\overline{B(0,r)}$ and $A_2 = B(0,4r)\setminus\overline{B(0,\frac12r)}$. 
Then for any $r>C$, $s\in[r(1+\eta),2r]$, $v\in\partial A_1$, and 
a discontinuous path $\gamma\subset A_2$ with $\gamma(1)\in\partial B(0,4r)$ and $\gamma(\mathrm{len}\,\gamma)\sim v$, 
\begin{equation}\label{eq:4.5}
\P^{1,v}\left[
\begin{array}{c}
\loe_1(\gamma\cup R^1[0,T^1(\partial A_2)])\subset B(0,s)^c,\\[3pt]
\loe_2(\gamma\cup R^1[0,T^1(\partial A_2)])\cap B(0,s-\eta r)\neq \emptyset,\\[3pt]
\P^{2,0}\left[R^2[0,T^2_{0,4r}]\cap L'\neq\emptyset\right]<\delta
\end{array}
\right]<\varepsilon,
\end{equation}
where $L' = \loe(\gamma\cup R^1[0,T^1(\partial A_2)])[1,t]$ and $t = \min\{i:\loe(\gamma\cup R^1[0,T^1(\partial A_2)])(i)\in B(0,s-\eta r)\}$.
\end{claim}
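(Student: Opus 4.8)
The plan is to deduce Claim~\ref{c:4.5} from Claim~\ref{c:4.4} by a multiscale argument, in the spirit of the proof of \cite[Lemma~4.5]{K} but feeding in the three-dimensional input of Claim~\ref{c:4.3}. Write $T=T^1(\partial A_2)$, abbreviate $\loe_i:=\loe_i(\gamma\cup R^1[0,T])$, and let $F$ be the event in \eqref{eq:4.5}. First I would record the anatomy of the loop erasure. Put $p_0=\loe(\gamma)(t_*)$, the first vertex of $\loe(\gamma)$ visited by $R^1[0,T]$, and let $\sigma$ be the last time $R^1$ is at $p_0$. Since the $R^1$-part of the concatenation comes after the $\gamma$-part, once $\loe(\gamma\cup R^1[0,T])$ reaches $p_0$ it never leaves the $R^1$-part, so $\loe(\gamma\cup R^1[0,T])[t_*,\cdot]=\loe(R^1[\sigma,T])$ and $\loe_2=\loe(R^1[\sigma,T])[2,\cdot]$; moreover $R^1[0,T]\cap\loe_1=\{p_0\}$. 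On $\{\loe_1\subset B(0,s)^c\}$ one has $|p_0|>s$, hence $L'=\loe_1\cup\loe(R^1[\sigma,T])[2,t']$, where $t'$ is the index at which $\loe(R^1[\sigma,T])$ first enters $B(0,s-\eta r)$. In particular $\loe_1\subseteq L'$ lies outside $B(0,s)$, so on $F$ the set $\Gamma:=\loe_1$ satisfies the hypothesis of Claim~\ref{c:4.4}: $\Gamma\subset B(0,s)^c$ and $\P^{2,0}[R^2[0,T^2_{0,4r}]\cap\Gamma\neq\emptyset]<\delta$.

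Next I would set up the scales. Fix a small $\varepsilon_0=\varepsilon_0(\varepsilon,\eta)$, to be chosen last, let $s_k=s-k\varepsilon_0 r$, and let $k$ range over the $\asymp\eta/\varepsilon_0$ indices for which the ball of radius $\varepsilon_0 r$ around $\partial B(0,s_k)$ stays inside the shell $\{s-\eta r<|x|<s\}$; note $s_k\in(r(1+\varepsilon_0),2r]$ for such $k$, and that the shell lies well inside $A_2$ since $r/2\ll s-\eta r$ and $s\ll 4r$. On the event $\{\loe_1\subset B(0,s)^c\}\cap\{\loe_2\cap B(0,s-\eta r)\neq\emptyset\}$, the simple path $\loe(R^1[\sigma,T])$ runs from $p_0$ (radius $>s$) into $B(0,s-\eta r)$, hence crosses every $\partial B(0,s_k)$; let $z_k\in\partial B(0,s_k)$ be its first crossing and $\theta_k$ the last time $R^1$ is at $z_k$ — one checks $\theta_k>\sigma$ and that the portion of $\loe(R^1[\sigma,T])$ up to $z_k$ lies on $L'$. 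At scale $k$ I would apply Claim~\ref{c:4.4} to $R^1$ started afresh from $z_k$ (at time $\theta_k$), with $\varepsilon$-parameter $\varepsilon_0$ and $\Gamma=\loe_1$: with conditional probability $\ge q$, $R^1$ creates a cut point of $R^1[\theta_k,T]$ inside $B(z_k,\varepsilon_0 r)$ that is hit by $R^2[0,T^2_{0,4r}]$ from every point of $B(z_k,\varepsilon_0 r)$. Since such a cut point is produced while $R^1$ stays in $B(z_k,\varepsilon_0 r)\subset\{s-\eta r<|x|<s\}$, it survives into $\loe(R^1[\sigma,T])$ before the latter enters $B(0,s-\eta r)$, so it lies on $L'$; and since a walk from $0$ reaches $B(z_k,\varepsilon_0 r)$ before $T^2_{0,4r}$ with probability $\ge c\varepsilon_0$ (a standard three-dimensional hitting estimate), a success at scale $k$ forces $\P^{2,0}[R^2[0,T^2_{0,4r}]\cap L'\neq\emptyset]\ge c\varepsilon_0 q=:\delta_0$.

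To assemble, fix $\delta\le\delta_0$; then on $F$ the walk $R^1$ fails at every admissible scale $k$. Iterating the scale-$k$ estimate along the successive times $\theta_k$, while carrying the conditioning on avoiding $\Gamma=\loe_1$ through the recursion, bounds $\P^{1,v}[F]$ by $(1-q)^{\asymp\eta/\varepsilon_0}$; choosing $\varepsilon_0$ small enough (depending only on $\varepsilon,\eta$) makes this $<\varepsilon$, and then $\delta=\min(\delta_0,\delta(\varepsilon_0))$, $C=C(\varepsilon_0)$, the constants furnished by Claim~\ref{c:4.4}, finish the proof.

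The part I expect to be genuinely delicate is the bookkeeping in the previous two paragraphs. The times $\sigma$ and $\theta_k$ are last-visit times, not stopping times, and $\loe_1$ is not measurable with respect to the past of $R^1$ at $\theta_k$; moreover Claim~\ref{c:4.4} conditions on avoiding $\Gamma$ all the way to $\partial B(0,4r)$, whereas $F$ only controls $R^1$ up to $T$. Making the iterated conditional estimate rigorous requires last-exit decompositions together with the observation — the analogue of the key point in \cite[Lemma~4.5]{K} — that once $R^1$ is at radius $\le s_k$ for the last time before its descent into $B(0,s-\eta r)$, the part of the eventual loop erasure at radii $\ge s_k$ is already frozen, so that the conditioning decouples across scales. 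The crux is to verify that the cut points handed over by Claim~\ref{c:4.4} are not subsequently erased by an excursion of $R^1$ back out past $\partial B(0,s_k)$, nor lost because $\loe(R^1[\sigma,T])$ dips into $B(0,s-\eta r)$ before reaching them; this is exactly the place where the three-dimensional strengthening in Claim~\ref{c:4.3} (replacing \cite[Proposition~3.5]{M09}) is needed, via Claim~\ref{c:4.4}.
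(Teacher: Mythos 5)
You have the right overall plan, and it matches the paper's: a multiscale iteration of Claim~\ref{c:4.4} over $\asymp K(\varepsilon)$ concentric spheres spaced $\asymp\eta r/K$ apart, each scale contributing a factor $(1-q)$. The delicate points you flag at the end (last-exit times, decoupling across scales) are genuine and are exactly where the paper spends its effort. But your sketch as written has a real gap in the choice of $\Gamma$ when invoking Claim~\ref{c:4.4}, and this is not a point you merely defer -- it is where the argument, as proposed, would not close.

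You take $\Gamma=\loe_1$, observing that on the event of \eqref{eq:4.5} it is contained in $L'$, hence has hitting probability $<\delta$ from the origin. That is true, but it is the wrong set to feed into Claim~\ref{c:4.4}. Claim~\ref{c:4.4} produces a bound for a random walk \emph{conditioned to avoid $\Gamma$}, and to use it one must match that conditioning against the actual conditional law of $R^1$ beyond a scale-$k$ time. The domain Markov property of loop erasure says that, at a time $j_k$ in the sense of the loop-erasure definition, the remaining walk $R^1[j_k+1,\cdot]$ is a simple random walk conditioned to avoid the \emph{entire} current loop erasure $\loe(\gamma\cup R^1[0,j_k])$, not merely the initial arc $\loe_1$. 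So the $\Gamma$ that Claim~\ref{c:4.4} must be applied with is $\Gamma_i=\loe(\gamma\cup R^1[0,\tau_i])$, where $\tau_i$ is the last such $j_k$-time before the loop erasure enters $B(0,s_i)$. This is what the paper does. Crucially, the smallness of $\P^{2,0}[R^2[0,T^2_{0,4r}]\cap\Gamma_i\neq\emptyset]$ is recovered because one checks $\Gamma_i\subset L'$ (the frozen portion of the loop erasure lies outside $B(0,s_i)\supset B(0,s-\eta r)$, hence on $L'$), so on the event of \eqref{eq:4.5} its hitting probability is $<\delta$. Using $\loe_1$ alone leaves the conditioning on the rest of $\Gamma_i$ unaccounted for, and there is no reason a priori that a bound conditional on avoiding $\loe_1$ transfers to one conditional on avoiding the strictly larger $\Gamma_i$; the discrepancy could swallow the factor $q$.

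Your second worry -- how to chain the conditional estimates rigorously through the last-exit times -- is handled in the paper by a device you should incorporate: define not just the failure events $\mathcal B_i$ but trimmed sub-events $\mathcal B_i'\subset\mathcal B_i$ (replacing the cut-set of $R^1[\tau_i,\infty)$ by that of $R^1[\tau_i,\tau_{i+1}]$) which are measurable with respect to $(\tau_{i+1},R^1[0,\tau_{i+1}])$. Because $\mathcal B_i\supset\mathcal B_i'$ and $\P[\mathcal B_i\mid\tau_i,R^1[0,\tau_i]]<1-q$, one gets $\P[\mathcal B_i'\mid\mathcal B_1',\ldots,\mathcal B_{i-1}']<1-q$ by the tower property, and the product bound $(1-q)^K<\varepsilon$ follows cleanly. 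One then checks separately, and this uses the hitting estimate $\P^0[T_{w,\varepsilon'r}<T_{0,4r}]\geq c_*$ from \cite[Lemma~2.5]{K} (your ``standard three-dimensional hitting estimate''), that the event in \eqref{eq:4.5} with $\delta<\min(\delta',c_*q)$ forces $\cap_i\mathcal B_i'$, so the cut-points produced at each scale, which lie on $L'$, would be hittable with probability $\geq c_*q>\delta$, a contradiction. This is exactly the ``frozen loop erasure across scales'' observation you mention, made precise.

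In short: right framework and right main lemma, but replace $\Gamma=\loe_1$ by the full current loop erasure $\Gamma_i$ at a $j_k$-type time $\tau_i$, control its hitting probability via $\Gamma_i\subset L'$, and organize the decoupling through the nested events $\mathcal B_i'\subset\mathcal B_i$.
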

\begin{proof}[Proof of Claim~\ref{c:4.5}]
Fix $\varepsilon>0$ and $\eta>0$. Take $q>0$ from Claim~\ref{c:4.4}. Let $K=K(\varepsilon)>2$ be an integer such that 
$(1-q)^K<\varepsilon$. Let $\varepsilon' = \frac{\eta}{2K}$ and $\delta' = \delta_{\mathrm{Claim}~\ref{c:4.4}}(\varepsilon')$ 
be the $\delta$ from Claim~\ref{c:4.4} corresponding to $\varepsilon_{\mathrm{Claim}~\ref{c:4.4}} = \varepsilon'$. 
Define $s_i = s - \frac{\eta i}{K+2}r$ for $i\in\{1,\ldots,K+1\}$. 
Let $j_k$ be as in the definition of the loop erasure, so that $R^1[j_k+1,T^1(\partial A_2)]$ is a random walk conditioned not to hit $\loe(\gamma\cup R^1[0,j_k])$. 
Let 
\[
\tau_i = \max\{j_k\leq T^1(\partial A_2)~:~\loe(\gamma\cup R^1[0,j_k])\subset B(0,s_i)^c\}.
\]
If $\tau_i<T^1(\partial A_2)$, then define $\Gamma_i = \loe(\gamma\cup R^1[0,\tau_i])$ and $v_i = R^1(\tau_i)$. 
By Claim~\ref{c:4.4}, 
\[
\P\left[\mathcal B_i~|~\tau_i,~R^1[0,\tau_i]\right]<1-q,
\]
where 
\[
\mathcal B_i = \left\{
\begin{array}{c}
\text{$\tau_i<T^1(\partial A_2)$, $\P^{2,0}\left[R^2[0,T^2_{0,4r}]\cap \Gamma_i \neq \emptyset\right]< \delta'$, and}\\[3pt]
\text{for some $y\in B(v_i,\varepsilon'r)$, $\P^{2,y}\left[\mathrm{cut}(R^1[\tau_i,+\infty);T^*_i)\cap R^2[0,T^2_{0,4r}]\neq\emptyset\right]\leq q$}
\end{array}
\right\},
\]
and $T^*_i = \min\{t>\tau_i:R^1(t)\in\partial B(v_i,\varepsilon'r)\}$. 
Note that the event $\mathcal B_i$ contains the following event
\[
\mathcal B_i' = \left\{
\begin{array}{c}
\text{$\tau_{i+1}<T^1(\partial A_2)$, $\P^{2,0}\left[R^2[0,T^2_{0,4r}]\cap \Gamma_i \neq \emptyset\right]< \delta'$, and}\\[3pt]
\text{for some $y\in B(v_i,\varepsilon'r)$, $\P^{2,y}\left[\mathrm{cut}(R^1[\tau_i,\tau_{i+1}];T^*_i)\cap R^2[0,T^2_{0,4r}]\neq\emptyset\right]\leq q$}
\end{array}
\right\},
\]
which depends only on $\tau_{i+1}$ and $R^1[0,\tau_{i+1}]$. Thus, 
\[
\P[\mathcal B_i'~|~\mathcal B_1',\ldots, \mathcal B_{i-1}'] 
=
\E\left[\P\left[\mathcal B_i'~|~\tau_i,~R^1[0,\tau_i]\right]~\big|~\mathcal B_1',\ldots, \mathcal B_{i-1}'\right] < 1 - q
\]
and $\P\left[\bigcap_{i=1}^{K}\mathcal B_i'\right]<(1-q)^K<\varepsilon$. 
It remains to show that the event in \eqref{eq:4.5} implies $\bigcap_{i=1}^{K}\mathcal B_i'$ for some choice of $\delta$. 
It is well known (see, e.g., \cite[Lemma~2.5]{K}) that there exists $c_* = c_*(\eta)>0$ such that 
\[
\P^0\left[T_{w_i,\varepsilon'r}<T_{0,4r}\right]\geq c_* \qquad\text{for all $i$ and $w_i\in\partial B(0,s_i)$}.
\]
Take $\delta<\min(\delta',c_*q)$. Then the event in \eqref{eq:4.5} implies $\bigcap_{i=1}^{K}\mathcal B_i'$. 
Indeed, if the event in \eqref{eq:4.5} occurs, 
\begin{itemize}\itemsep1pt
\item
since $\loe_1(\gamma\cup R^1[0,T^1(\partial A_2)])\subset B(0,s)^c$ and $\loe_2(\gamma\cup R^1[0,T^1(\partial A_2)])$ contains a path from $\partial B(0,s)$ to $\partial B(0,s-\eta r)$, 
all $\tau_i$'s are strictly smaller than $T^1(\partial A_2)$,
\item
since $\Gamma_i\subset L'$, $\P^{2,0}\left[R^2[0,T^2_{0,4r}]\cap \Gamma_i \neq \emptyset\right]< \delta < \delta'$, and
\item
since $\mathrm{cut}(R^1[\tau_i,\tau_{i+1}];T^*_i)\subset L'$, 
\begin{multline*}
\min_{y\in B(v_i,\varepsilon' r)}\,\P^{2,y}\left[\mathrm{cut}(R^1[\tau_i,\tau_{i+1}];T^*_i)\cap R^2[0,T^2_{0,4r}]\neq\emptyset\right]\\
\leq \frac{\P^{2,0}\left[\mathrm{cut}(R^1[\tau_i,\tau_{i+1}];T^*_i)\cap R^2[0,T^2_{0,4r}]\neq\emptyset\right]}
{\P^{2,0}\left[T^2_{v_i,\varepsilon'r}<T^2_{0,4r}\right]}<\frac{\delta}{c_*} < q.
\end{multline*}
\end{itemize}
The proof of Claim~\ref{c:4.5} is complete. 
\end{proof}

\bigskip

\begin{proof}[Proof of Lemma~\ref{l:4.6}]
Without loss of generality we may assume that $s$ and $r/s$ are big (possibly depending on $K$). 
Let $\rho_i = \frac{r}{2^i}$, for $i\in\{0,\ldots, I = \lfloor \log_2\frac rs\rfloor\}$, and 
consider the stopping times when the random walk $R^1$ jumps between different $\partial B(v,\rho_i)$'s, 
\[
\begin{array}{rll}
\tau_0' &= &T^1_{v,r}, ~~i'(0) = 0,\\[3pt]
\tau_j' &= &\min\left\{t>\tau_{j-1}'~:~ R^1(t)\in\partial B(v,\rho_{i'(j)})~\text{for some $i'(j)\neq i'(j-1)$}\right\}.
\end{array}
\]
The process $i'(j)$ dominates a random walk on $\{0,\ldots,I\}$ with a drift towards $0$ and an absorption at $0$. 
In particular, if $n_i' = \sharp\{j:i'(j)=i\}$, then there exists $\lambda = \lambda(K)$ such that 
$\P^{1,0}[\sum_{i=1}^{I} n_i' > \lambda I] \leq C(\frac sr)^K$. 
Consider the annuli 
\[
A_{1,i} = B(v,2\rho_{2i})\setminus\overline{B(v,\rho_{2i})}\quad\text{and}\quad 
A_{2,i} = B(v,4\rho_{2i})\setminus\overline{B(v,\frac12\rho_{2i})}, \qquad\text{for }1\leq i\leq \frac{I-1}{2}
\]
and the stopping times 
\[
\begin{array}{rll}
\tau_1 &= &T^1(\partial A_{1,1}), ~~i(0) = 1,\\[3pt]
\tau_j &= &\min\left\{t>\tau_{j-1}~:~ R^1(t)\in\partial A_{1,i(j)}~\text{for some $i(j)\neq i(j-1)$}\right\}.
\end{array}
\]
Note that the $\tau_j$ is a subsequence of the $\tau_j'$. Therefore, if $n_i = \sharp\{j:i(j)=i\}$, then 
$\P^{1,0}[\sum_{i=1}^{\lfloor \frac{I-1}{2}\rfloor} n_i > \lambda I] \leq C(\frac sr)^K$. 
Let $M = \lceil 6\lambda\rceil$, then 
\begin{equation}\label{eq:lemma4.2:1}
\P^{1,0}\left[\sharp\left\{1\leq i\leq \frac12 I~:~ n_i>M\right\}>\frac16I\right]\leq C\left(\frac sr\right)^K.
\end{equation}
For each $j$ and $m\in\{0,\ldots,M-1\}$, define $r_j = \rho_{2i(j)}$ and $s_{j,m} = 2r_j - \frac{m}{M}r_j$, 
discontinuous paths $\gamma_{i,j} = \loe(R^1[0,\tau_j-1]\cap A_{2,i})$ and $\gamma_j = \gamma_{i(j),j}$, 
the loop erasures $L_{k,j} = \loe_k(\gamma_j\cup R^1[\tau_j,\tau_{j+1}])$ for $k=1,2$, 
and the event $\mathcal X_{j,m}$ that 
\begin{itemize}
\item[(a)]
$L_{1,j}\subset B(v, s_{j,m})^c$,
\item[(b)]
$L_{2,j}\cap B(v, s_{j,m+1})\neq \emptyset$,
\item[(c)]
$\P^{2,v}\left[R^2[0,T^2_{v,4r}]\cap L_{j,m}'\neq\emptyset\right]<\delta_{\mathrm{Claim}~\ref{c:4.5}}$, 
where $L_{j,m}' = L_{1,j}\cup L_{2,j}[1,t]$ and $t = \min\{i:L_{2,j}(i)\in \partial B(v,s_{j,m+1})\}$. 
\end{itemize}
By Claim~\ref{c:4.5}, $\P^{1,0}[\mathcal X_{j,m}| \tau_j<\infty, R^1[0,\tau_j]]<\varepsilon_{\mathrm{Claim}~\ref{c:4.5}}$ and 
\[
\P^{1,0}\left[\cup_{m=0}^{M-1}\mathcal X_{j,m}~\big|~ \tau_j<\infty, R^1[0,\tau_j]\right]<M\,\varepsilon_{\mathrm{Claim}~\ref{c:4.5}}.
\]
Let $\mathcal X_j = \cup_{m=0}^{M-1}\mathcal X_{j,m}$. Then the sequence of their indicators is dominated by a sequence of independent Bernoulli 
random variables with parameter $M\,\varepsilon_{\mathrm{Claim}~\ref{c:4.5}}$. In particular, by choosing $\varepsilon_{\mathrm{Claim}~\ref{c:4.5}}$ 
small enough, 
\begin{equation}\label{eq:lemma4.2:2}
\P^{1,0}\left[\sharp\left\{j\leq \lambda I~:~\mathcal X_j\right\}>\frac{\lambda I}{M}\right]\leq C\left(\frac sr\right)^K.
\end{equation}

\medskip

To finish the proof of Lemma~\ref{l:4.6}, it suffices to show that 
the event in \eqref{eq:l:4.6} implies one of the events in \eqref{eq:lemma4.2:1} or \eqref{eq:lemma4.2:2}. 
We call an index $i$ good, if $n_i\leq M$ and none of the $\mathcal X_j$'s occur for $j$'s with $i(j) = i$. 
The following claim is essentially \cite[Sublemma~4.6.1]{K}. 
\begin{claim}\label{c:4.6.1}
Let $i$ be a good index. For $j>0$, let $n_{i,j} = \sharp\{k<j:R^1(\tau_k)\in\partial A_{1,i}\}$, 
$u_{i,j} = 2\rho_{2i} - \frac{n_{i,j}}{M}\rho_{2i}$, and $U_{i,j} = B(v,2\rho_{2i})\setminus\overline{B(v,u_{i,j})}$. 
Let $t_{i,j} = \inf\{t:\gamma_{i,j}(t)\cap \partial B(v,u_{i,j})\neq\emptyset\}$, and define $\gamma_{i,j}^* = \gamma_{i,j}[1,t_{i,j}]$ 
if $t_{i,j}<\infty$ and $\gamma_{i,j}^*=\emptyset$ otherwise. 
Then for all $j$, if $\gamma_{i,j}^*\neq\emptyset$, then 
\begin{equation}\label{eq:4.6.1}
\P^{2,v}\left[\gamma_{i,j}^*\cap U_{i,j}\cap R^2[0,T^2_{v,4\rho_{2i}}]\neq\emptyset\right]\geq \delta_{\mathrm{Claim}~\ref{c:4.5}}.
\end{equation}
\end{claim}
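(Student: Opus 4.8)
The plan is to fix the good index $i$, write $\rho=\rho_{2i}$, and argue by induction on $n_{i,j}$. The base of the induction is the reduction that if $n_{i,j}=0$ then $R^1[0,\tau_j-1]$ has never hit $\partial A_{1,i}$, hence never entered $B(v,2\rho)$, so (since $u_{i,j}=2\rho$) the path $\gamma_{i,j}$ never reaches $\partial B(v,u_{i,j})$ and $\gamma^*_{i,j}=\emptyset$: nothing to prove. So I would assume $n_{i,j}=\ell\ge1$ and let $j_1<\dots<j_\ell$ be the indices $k<j$ with $i(k)=i$; note $\rho=\rho_{2i}\gtrsim\rho_I\asymp s$ is large, so that Claim~\ref{c:4.5} is available at scale $\rho$, with $\eta=1/M$ (admissible since $\ell\le n_i\le M$).

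The structural input I would use is that, because the bounding spheres of $A_{2,i}$ sit at the dyadic scales of spheres making up $\partial A_{1,i-1}$ and $\partial A_{1,i+1}$, once $R^1$ has entered $A_{1,i}$ it can leave $A_{2,i}$ only by hitting $\partial A_{1,i'}$ for some $i'\neq i$. Hence the intervals $[\tau_{j_m},\tau_{j_m+1}]$ ($m\le\ell$) are exactly the excursions during which $R^1$ may touch $A_{1,i}$, and outside them $R^1$ avoids $\{\rho<|x-v|<2\rho\}\supseteq U_{i,j}$ (its visits to $A_{2,i}$ then lying in the outer shell $\{2\rho\le|x-v|\le4\rho\}$ or the inner shell $\{\tfrac12\rho\le|x-v|\le\rho\}$). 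Writing $\gamma_m:=\loe(R^1[0,\tau_{j_m}-1]\cap A_{2,i})$, this makes the partial loop erasure $\gamma_{i,j_m+1}$ coincide (up to the exit step) with the loop erasure $\loe(\gamma_m\cup R^1[\tau_{j_m},\tau_{j_m+1}])$ of Claim~\ref{c:4.5}, applied with center $v$, radius parameter $\rho$, and past path $\gamma_m$ --- which satisfies $\gamma_m\subset A_{2,i}$, $\gamma_m(1)\in\partial B(v,4\rho)$, $\gamma_m(\mathrm{len}\,\gamma_m)\sim R^1(\tau_{j_m})\in\partial A_{1,i}$ --- so that the objects $L_{1,j_m},L_{2,j_m},L'_{j_m,m'}$ are precisely those in the definition of the events $\mathcal X_{j_m,m'}$ from the proof of Lemma~\ref{l:4.6}.

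For the inductive step I would exploit that $U_{i,j}$ is the shell of width $\ell\rho/M$ inside $\partial B(v,2\rho)$, with inner radius $u_{i,j}=(2-\ell/M)\rho=s_{j_\ell,\ell}$, the bookkeeping $u_{i,j}=(2-n_{i,j}/M)\rho$ being arranged so that each visit to $A_{1,i}$ deepens the accessible part of this shell by at most one ``layer'' of width $\rho/M$ (marked by the $s_{j_m,m'}$). Since $\gamma^*_{i,j}\neq\emptyset$, the loop erasure reaches radius $u_{i,j}$. If it already did so before the $\ell$-th visit, the induction hypothesis applied at $j_\ell$ (for which $n_{i,j_\ell}=\ell-1$) furnishes a hittable subpath of the loop erasure inside $U_{i,j_\ell}\subseteq U_{i,j}$, and I would check --- using the confinement above together with the fact that $\loe(\gamma_1\cup\gamma_2)$ always begins with the initial segment of $\loe(\gamma_1)$ left untouched by $\gamma_2$ --- that this subpath is neither erased nor pushed out of $U_{i,j}$ by the $\ell$-th excursion or by the continuation of $R^1$ past $\tau_{j_\ell+1}$. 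Otherwise the $\ell$-th excursion itself deepens the loop erasure from radius $>s_{j_\ell,\ell-1}$ to radius $\le s_{j_\ell,\ell}$, so conditions (a)--(b) of the event of Claim~\ref{c:4.5} hold with $s=s_{j_\ell,\ell-1}$ and $s-\eta r=s_{j_\ell,\ell}$; since $i$ is good, $\mathcal X_{j_\ell,\ell-1}$ does not occur, hence $\P^{2,v}\big[R^2[0,T^2_{v,4\rho}]\cap L'_{j_\ell,\ell-1}\neq\emptyset\big]\ge\delta_{\mathrm{Claim}~\ref{c:4.5}}$, and --- again by the confinement --- $L'_{j_\ell,\ell-1}$ persists as a subpath of $\gamma^*_{i,j}$ lying in $U_{i,j}$ (it runs from $\partial B(v,2\rho)$ to $\partial B(v,u_{i,j})$). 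In either case monotonicity of hitting probabilities closes the induction.

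The hard part will be this persistence/localization in the inductive step: one must show that a hittable piece of the partial loop erasure, once produced during a visit to $A_{1,i}$, is not subsequently erased and remains inside $U_{i,j}$ --- equivalently, that later excursions (which stay out of $\{\rho<|x-v|<2\rho\}$) and the continuation of $R^1$ modify the loop erasure only outside $U_{i,j}$ --- so that the hitting estimate for $R^2$ genuinely transfers to $\gamma^*_{i,j}\cap U_{i,j}$. This is exactly where the confinement of $R^1$ to $A_{2,i}$ between consecutive $\tau_{j_m}$'s has to be combined carefully with the concatenation structure of loop erasure, following \cite[Sublemma~4.6.1]{K}.
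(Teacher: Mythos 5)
Your ingredients are the right ones and match the paper's: the events $\mathcal X_{j,m}$ from Claim~\ref{c:4.5}, the confinement of $R^1$ to $A_{2,i}$ between consecutive times when $R^1$ hits $\partial A_{1,i}$, the identification $u_{i,j}=s_{j,n_{i,j}}$, and induction. The difficulty concentrates exactly where you flag it at the end, but the claim you say you ``would check'' is not merely hard --- it is false. In your ``if'' branch, the induction at $j_\ell$ gives hittability of $\gamma_{i,j_\ell}^*\cap U_{i,j_\ell}$ and you want this subpath to be ``neither erased nor pushed out of $U_{i,j}$ by the $\ell$-th excursion''. For the steps after $\tau_{j_\ell+1}$ this is fine: confinement keeps $R^1$ out of $A_{1,i}\supseteq U_{i,j}$, so those steps can only cut $\gamma$ outside the shell, and if the crossing is killed that way then $\gamma_{i,j}^*=\emptyset$ and there is nothing to prove. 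But during the $\ell$-th excursion $R^1[\tau_{j_\ell},\tau_{j_\ell+1}]$ itself, $R^1$ starts on $\partial A_{1,i}$ and can move freely inside $A_{1,i}$; it can hit $\gamma_{i,j_\ell}^*$ inside $U_{i,j_\ell}$ and erase most or all of it. So persistence across the $\ell$-th excursion does not hold in general. Your ``otherwise'' branch also does not absorb this sub-case: it presupposes that before the $\ell$-th visit the loop erasure stayed at radius $>s_{j_\ell,\ell-1}=u_{i,j_\ell}$, which is strictly stronger than ``did not reach $u_{i,j}=s_{j_\ell,\ell}$''. The case where the $\ell$-th excursion first erases the old crossing (so that $L_{1,j_\ell}\subset B(v,s_{j_\ell,\ell-1})^c$) and then rebuilds a new one down to $u_{i,j}$ falls through the cracks of your dichotomy.

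The paper's proof sidesteps this by inducting on $j$ directly, one stopping time at a time, rather than by grouping steps between visits to $\partial A_{1,i}$. At each $j$ there are two cases. If $R^1(\tau_j)\notin\partial A_{1,i}$, the excursion avoids $A_{1,i}$ and can alter $\gamma_{i,j}^*$ only by erasing from its end; hence $\gamma_{i,j+1}^*$ is either empty or equal to $\gamma_{i,j}^*$ (and $u_{i,j+1}=u_{i,j}$), so the bound passes trivially. If $R^1(\tau_j)\in\partial A_{1,i}$, goodness of $i$ means $\mathcal X_{j,n_{i,j}}$ fails, so one of (a), (b), (c) fails: (a) failing gives $\gamma_{i,j}^*\neq\emptyset$, hittable by the induction hypothesis, with $\gamma_{i,j+1}^*$ containing it whenever $\gamma_{i,j+1}^*\neq\emptyset$; (a) holding and (b) failing gives $\gamma_{i,j+1}^*=\emptyset$; (a) and (b) holding and (c) failing certifies hittability of the fresh crossing $\gamma_{i,j+1}^*=L'_{j,n_{i,j}}$ directly. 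No multi-step persistence is ever asserted: whenever the old hittable crossing is destroyed, either no crossing survives or a new hittable one is produced and certified in the very same inductive step. You should reorganize your induction in this step-by-step form; as written, the ``if'' branch of your inductive step relies on a persistence statement that can fail.
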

\begin{proof}[Proof of Claim~\ref{c:4.6.1}]
The same as the proof of \cite[Lemma~4.6.1]{K}. 
We use induction on $j$. If $R^1(\tau_j)\notin \partial A_{1,i}$, then $R^1[\tau_j,\tau_{j+1}]$ does not enter in $A_{1,i}$ 
and thus can only change $\gamma_{i,j}^*$ by erasing a piece from its end leading to $\gamma_{i,j+1}\cap \partial B(v,u_{i,j})=\emptyset$. 
(Note that in this case $u_{i,j} =u_{i,j+1}$.)
On the other hand, if $R^1(\tau_j)\in \partial A_{1,i}$, then $\mathcal X_j$ does not occur, and, in particular, $\mathcal X_{j,n_{i,j}}$ 
does not occur, meaning that one of (a), (b), or (c) fails. 
If (a) fails, then $\gamma_{i,j}^*\neq\emptyset$ and hence satisfies \eqref{eq:4.6.1}. 
Also, if $\gamma_{i,j+1}^*\neq\emptyset$, then it contains $\gamma_{i,j}^*$ and hence also satisfies \eqref{eq:4.6.1}.
If (a) holds but (b) fails, then $\gamma_{i,j+1}^*=\emptyset$. 
Finally, if (a) and (b) hold but not (c), then $\gamma_{i,j+1}^* = L'_{j,n_{i,j}}$, and \eqref{eq:4.6.1} holds. 
\end{proof}

\medskip

Assume that the event in \eqref{eq:l:4.6} occurs for some $T$. Let $J$ be such that $\tau_J\leq T < \tau_{J+1}$. 
Since $R^1[\tau_J,T]\subset A_{2,i(J)}$, Claim~\ref{c:4.6.1} shows that for each good index $i\neq i(J)$, either 
$\loe(R^1[0,T])$ does not cross $A_{1,i}$ from outside to inside or 
the first such crossing is hittable by $R^2$. 
Since we assume that the event in \eqref{eq:l:4.6} occurs, $\loe(R^1[0,T])$ crosses $B(v,r)\setminus\overline{B(v,s)}$ and, in particular, 
all $A_{1,i}$'s. Thus, for each good $i\neq i(J)$, 
\[
\P^{2,v}\left[\loe(R^1[0,T])[0,\sigma]\cap A_{1,i}\cap R^2[0,T^2_{v,4\rho_{2i}}]\neq\emptyset\right]\geq \delta_{\mathrm{Claim}~\ref{c:4.5}}.
\]
By the Harnack inequality applied to the harmonic function
\[
\P^{2,x}\left[\loe(R^1[0,T])[0,\sigma]\cap A_{1,i}\cap R^2[0,T^2_{v,4\rho_{2i}}]\neq\emptyset\right],\qquad x\in B(v,\frac12\rho_{2i}),
\]
there exists a constant $c_*>0$ (independent of $i$) such that 
\[
\P^{2,x}\left[\loe(R^1[0,T])[0,\sigma]\cap A_{1,i}\cap R^2[0,T^2_{v,4\rho_{2i}}]\neq\emptyset\right]\geq c_*\,\delta_{\mathrm{Claim}~\ref{c:4.5}},\qquad \text{for all }x\in B(v,\frac12\rho_{2i}).
\]

Recall that we aim to show that the event in \eqref{eq:l:4.6} implies one of the events in \eqref{eq:lemma4.2:1} or \eqref{eq:lemma4.2:2}.
Thus, assume that the both these events do not occur. 
This implies that at least $\frac16 I$ indices between $1$ and $\frac12 I$ are good. 
Let $n=\lfloor\frac{I}{12}\rfloor - 2$ and $i_1>i_2>\ldots >i_n>1$ be good indices with $i_k - i_{k+1}\geq 2$ and $i_k\neq i(J)$, then 
\begin{multline*}
\P^{2,v}\left[R^2[0,T^2_{v,r}]\cap\loe(R^1[0,T])[0,\sigma]\cap \left(B(v,r)\setminus \overline{B(v,s)}\right)=\emptyset\right]\\
\leq
\prod_{k=1}^{n-1}\P^{2,v}\left[R^2[T^2_{v,\frac12\rho_{2i_k}},T^2_{v,4\rho_{2i_k}}]\cap\loe(R^1[0,T])[0,\sigma]=\emptyset~\Big|~ R^2[0,T^2_{v,4\rho_{2i_{k+1}}}]\cap\loe(R^1[0,T])[0,\sigma]=\emptyset\right]\\
\leq 
\prod_{k=1}^{n-1}\max_{w\in B(v,\frac12\rho_{2i_k})}\P^{2,w}\left[R^2[0,T^2_{v,4\rho_{2i_k}}]\cap\loe(R^1[0,T])[0,\sigma]=\emptyset\right]\\
\leq 
(1 - c_*\, \delta_{\mathrm{Claim}~\ref{c:4.5}})^{n-1}\leq 
(1 - c_*\, \delta_{\mathrm{Claim}~\ref{c:4.5}})^{\frac{I}{12} - 4} 
< \left(\frac sr\right)^\eta,
\end{multline*}
for $\eta$ small enough. This contradicts the assumption that the event \eqref{eq:l:4.6} occurs. 
Thus, we have shown that if both the events \eqref{eq:lemma4.2:1} and \eqref{eq:lemma4.2:2} do not occur, then 
the event \eqref{eq:l:4.6} also does not occur. The proof of Lemma~\ref{l:4.6} is complete. 
\end{proof}

\section{Strong coupling of loop soups}\label{sec:strongcoupling}

In this section we prove Theorem~\ref{thm:coupling:soups} 
by giving an explicit coupling of the random walk and the Brownian loop soups satisfying the conditions of the theorem. 
In two dimensions, such coupling is obtained in \cite{LT04}. 
Our construction is an adaptation of the one from \cite{LT04} to higher dimensions. 

\subsection{Preliminaries}

In this section we prove two lemmas needed for the proof of Theorem~\ref{thm:coupling:soups}. 
\begin{lemma}\label{l:p2n:expansion}
For any $d\geq 2$, 
\[
p_{2n}(0,0) = 2\, \left(\frac{d}{4\pi n}\right)^{\frac d2}\cdot \left(1 - \frac{d}{8n} + O\left(\frac{1}{n^2}\right)\right),
\]
as $n\to\infty$. 
\end{lemma}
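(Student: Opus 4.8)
The statement to prove is the local central limit theorem expansion for the return probability $p_{2n}(0,0)$ of simple random walk on $\mathbb{Z}^d$, with an explicit second-order correction term. Let me plan a proof.

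The plan is to obtain the expansion from the Fourier-analytic (inversion) formula for $p_{2n}(0,0)$ and carefully expand the characteristic function of a single step near the origin. First I would write
\[
p_{2n}(0,0) = \frac{1}{(2\pi)^d}\int_{[-\pi,\pi]^d}\phi(\theta)^{2n}\,d\theta,
\]
where $\phi(\theta) = \frac1d\sum_{j=1}^d\cos\theta_j$ is the characteristic function of one step of the simple random walk. The dominant contributions come from neighborhoods of the two points where $|\phi(\theta)| = 1$, namely $\theta = 0$ (where $\phi = 1$) and $\theta = (\pi,\ldots,\pi)$ (where $\phi = -1$); by the change of variables $\theta \mapsto \theta + (\pi,\ldots,\pi)$ and the fact that we raise to an even power $2n$, these two contributions are equal, which produces the overall factor $2$. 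Away from these points $|\phi(\theta)|$ is bounded away from $1$, so that part of the integral is $O(\rho^{2n})$ for some $\rho<1$, which is negligible compared with any power of $n$.

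Next I would expand $\log\phi(\theta)$ near $\theta = 0$. Using $\cos\theta_j = 1 - \frac{\theta_j^2}{2} + \frac{\theta_j^4}{24} - \cdots$, one gets $\phi(\theta) = 1 - \frac{|\theta|^2}{2d} + \frac{1}{24d}\sum_j\theta_j^4 + O(|\theta|^6)$, and hence
\[
\log\phi(\theta) = -\frac{|\theta|^2}{2d} + \frac{1}{24d}\sum_j\theta_j^4 - \frac{1}{8d^2}|\theta|^4 + O(|\theta|^6).
\]
Then $\phi(\theta)^{2n} = \exp\!\big(2n\log\phi(\theta)\big)$; after the rescaling $\theta = u/\sqrt{n}$ the leading term becomes $e^{-|u|^2/d}$ and the correction terms become $\frac{1}{n}$ times polynomial-times-Gaussian expressions. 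The standard way to make this rigorous is to split the integral into $|\theta|\le n^{-1/2+\kappa}$ (for a small $\kappa>0$) and its complement inside a fixed small ball, control the tail by the quadratic lower bound $|\phi(\theta)|\le e^{-c|\theta|^2}$, and on the main piece expand the exponential of the correction terms to first order, justifying the interchange of limit and integral by Gaussian domination. Carrying out the resulting Gaussian moment computations — using $\int_{\mathbb{R}^d} e^{-|u|^2/d}\,du = (\pi d)^{d/2}$ and the standard formulas for $\int u_j^2 e^{-|u|^2/d}$, $\int u_j^4 e^{-|u|^2/d}$, $\int |u|^4 e^{-|u|^2/d}$ — gives the leading term $2\left(\frac{d}{4\pi n}\right)^{d/2}$ and identifies the coefficient of the $\frac1n$ correction as $-\frac{d}{8}$; the $O(n^{-2})$ remainder follows by keeping one more order in all the expansions and bounding uniformly.

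The main obstacle I expect is bookkeeping rather than conceptual: one must track the quartic terms in the expansion of $\log\phi$ carefully (there are two distinct sources, from the $\theta^4$ term in $\cos$ and from the $-\frac12(\log)'' $-type cross term coming from squaring the quadratic part), and then evaluate the Gaussian integrals of $\sum_j u_j^4$ and $|u|^2$ against the anisotropic-looking but actually isotropic weight $e^{-|u|^2/d}$, making sure the dimensional factors combine to exactly $-\frac{d}{8n}$. Alternatively — and this may be cleaner to write — one can avoid Fourier analysis entirely and instead use the combinatorial identity $p_{2n}(0,0) = \frac{1}{(2d)^{2n}}\sum \binom{2n}{k_1^+,k_1^-,\ldots} $ together with Stirling's formula to second order, but the Fourier route is more systematic for extracting the precise constant. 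Either way the result is classical (it is, e.g., implicit in standard local CLT references), so the proof is a matter of executing one of these expansions to second order; I would present the Fourier version with the two-point decomposition, the quadratic tail bound, and the first-order expansion of the correction, citing \cite{LL10} or \cite{L} for the underlying local central limit theorem machinery.
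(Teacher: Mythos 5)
Your proposal takes essentially the same approach as the paper: write $p_{2n}(0,0)$ via the Fourier inversion formula $\frac{1}{(2\pi)^d}\int_{[-\pi,\pi]^d}\bigl(\frac1d\sum_k\cos\theta_k\bigr)^{2n}\,d\theta$ and extract the asymptotics of this integral by a saddle-point expansion near the two critical points. The paper does not carry out the expansion itself but cites Ball and Sterbenz \cite{BS03} for the analysis; your sketch (factor of $2$ from the two critical points, quadratic tail bound, rescaling $\theta = u/\sqrt n$, Gaussian moment computation yielding the $-\frac{d}{8n}$ correction) is a correct account of what that analysis entails.
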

\begin{proof}
Using the inverse Fourier transform, one can write 
\[
p_{2n}(0,0) = \frac{1}{(2\pi)^d}\,\int_{[-\pi,\pi]^d}\left(\frac1d\,\sum_{k=1}^d\cos(\theta_k)\right)^{2n}\,d\theta.
\]
The result follows by analysing this integral, see, for instance, \cite{BS03}.
\end{proof}

\medskip

\begin{lemma}\label{l:coupling:bridges}
Let $q = \frac1d$. For any $D>0$ there exists $C_{\ref{l:coupling:bridges}}<\infty$ such that for every $n\geq 1$, there exists a coupling $\mathbb Q_n$
of the $d$-dimensional random walk bridge $(X_t)_{t\in[0,2n]}$ of length $2n$ from $0$ to $0$ and the standard $d$-dimensional Brownian bridge $(B_t)_{t\in[0,1]}$ such that 
\[
\mathbb Q_n\left[\sup_{0\leq t\leq 1}\left\|\frac{X_{2nt}}{\sqrt{2nq}} - B_t\right\| > C_{\ref{l:coupling:bridges}}\cdot n^{-\frac14}\log n\right] \leq C_{\ref{l:coupling:bridges}}\cdot n^{-D}.
\]
\end{lemma}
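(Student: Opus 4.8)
The plan is to build the coupling in two stages: first couple the unconditioned random walk with unconditioned Brownian motion via the Koml\'os--Major--Tusn\'ady (KMT) strong approximation, and then pass to the bridges by conditioning both processes on their endpoint (at time $2n$, resp. time $1$) and controlling the error introduced by this conditioning. In detail, I would start from the classical dyadic (Skorokhod/KMT) embedding of a single coordinate of the random walk into a Brownian motion, which yields a coupling of the $d$-dimensional simple random walk $(S_t)_{t\in[0,2n]}$ and a standard $d$-dimensional Brownian motion $(W_t)_{t\geq 0}$ (after the usual rescaling by $\sqrt{2nq}$, $q=1/d$, since the SRW step has variance $q$ per coordinate per unit time) such that
\[
\P\Big[\sup_{0\leq t\leq 2n}\big\|S_t - \sqrt{q}\,W_t\big\| > C\log n\Big] \leq C\, n^{-D'}
\]
for any prescribed $D'$, with $C=C(D',d)$. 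This is standard; it follows coordinatewise from \cite{L} or the original KMT estimate. In particular, the endpoints satisfy $\|S_{2n} - \sqrt q\, W_{2n}\| \le C\log n$ on this event.

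The second step is the bridge conditioning. Write $B_t^{\mathrm{br}} = W_{2nt} - t\,W_{2n}$ (after rescaling $W$ to have time horizon $1$ and unit diffusivity, i.e. set $\widetilde W_t = W_{2nt}/\sqrt{2n}$ so that $B_t = \widetilde W_t - t\widetilde W_1$ is a standard Brownian bridge on $[0,1]$), and obtain the random walk bridge $X$ from the walk $S$ by conditioning on $S_{2n}=0$. The key analytic input is that the law of the random walk path conditioned on $\{S_{2n}=0\}$ is absolutely continuous with respect to the law of the path conditioned on $\{S_{2n}=z\}$ for $z$ of size $O(\log n)$ (lattice parity permitting), with Radon--Nikodym density bounded by a constant via the local central limit theorem (Lemma~\ref{l:p2n:expansion} gives the relevant ratios $p_{2n}(0,0)/p_{2n}(0,z)$ up to constants for $|z|=O(\sqrt n\,)$, and one only needs $|z|=O(\log n)$ here). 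This lets me transfer the bound: first condition $W$ to land where $S$ lands, producing a Brownian bridge between the (random, close) endpoints, then ``re-center'' that bridge to a standard one at an additional cost of at most $C|W_{2n}-S_{2n}/\sqrt q|/\sqrt{2n} \le C\log n/\sqrt n$ in sup-norm — a negligible term. The analogous re-centering on the random walk side (relating the bridge $0\to 0$ of length $2n$ to the walk conditioned to be near $0$) is handled by the same local CLT comparison.

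The step I expect to be the main obstacle is making the conditioning transfer fully rigorous while keeping the $n^{-1/4}\log n$ rate: one must be careful that the $\log n$-scale endpoint discrepancy from KMT, when pushed through the bridge conditioning and the Brownian scaling (which divides spatial errors by $\sqrt{2n}$), does not blow up — the natural scale of the bridge sup-norm error is $\frac{\log n}{\sqrt n}\cdot n^{1/4} = n^{-1/4}\log n$ only if one correctly accounts for the extra fluctuation introduced by pinning, and this is precisely the source of the $n^{-1/4}$ (rather than $n^{-1/2}$) in the statement. Concretely, the bridge between nearby endpoints of a KMT-coupled pair differs from the standard bridge not just by the deterministic re-centering but by the need to realize the conditioned walk; the cleanest route is to absorb everything into the $O(\log n)$ sup-bound at scale $\sqrt{2n}$ before rescaling to time horizon $1$, and then verify the claimed rate by bookkeeping the scaling exponents, using the boundedness of the relevant Radon--Nikodym derivatives from the local CLT and Lemma~\ref{l:p2n:expansion}. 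The tail probability $n^{-D}$ is obtained by choosing $D'$ large enough in the KMT step and absorbing the (polynomially small) probabilities of the bad endpoint events.
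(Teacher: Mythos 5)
Your proposal takes a genuinely different route from the paper. The paper does not approach the $d$-dimensional bridge coupling via unconditioned KMT and then conditioning: instead it invokes, as a black box, the one-dimensional random walk bridge vs.\ Brownian bridge coupling of Lawler--Trujillo \cite[Lemma~3.1]{LT04} (which already achieves the stronger rate $n^{-1/2}\log n$ in one dimension), and then builds the $d$-dimensional bridge by first sampling a multinomial sequence $L_n = (L^1_n,\dots,L^d_n)$ of coordinate directions conditioned so that each coordinate has an even number of steps, and then time-reparametrizing $d$ independent one-dimensional bridges $X^{i,M_i}$ via $L^i_n$. The degradation from $n^{-1/2}$ to $n^{-1/4}$ comes precisely from this reparametrization: $L^i_n(k)$ deviates from $qk$ by $O((n\log n)^{1/2})$, and the one-dimensional bridge fluctuates by roughly the square root of the time increment, yielding a $n^{1/4}$-scale error before normalizing by $\sqrt{2n}$. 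Your explanation that the $n^{-1/4}$ rate is ``precisely the source\dots of the extra fluctuation introduced by pinning'' therefore does not match the paper's mechanism, and it also doesn't match your own scheme: if your conditioning transfer were carried through, the natural scale would be $n^{-1/2}\log n$ (KMT error $\log n$ plus endpoint re-centering $\log n$, both at spatial scale $\sqrt{n}$), not $n^{-1/4}\log n$.

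There is also a real gap in the conditioning step as written. You propose to KMT-couple the unconditioned $(S,W)$, then condition on $S_{2n}=0$, then re-center $W$ to $\tilde W_t := W_t - \frac{t}{2n}W_{2n}$ and call $\tilde W$ (after rescaling) a standard Brownian bridge. Unconditionally $\tilde W$ is indeed a bridge and is independent of $W_{2n}$, but it is \emph{not} independent of $S_{2n}$ under the KMT coupling, so after conditioning on $S_{2n}=0$ the law of $\tilde W$ is no longer that of a standard bridge; showing it is close enough requires an extra argument you have not supplied. Likewise, the Radon--Nikodym claim that ``the law of the random walk path conditioned on $\{S_{2n}=0\}$ is absolutely continuous with respect to the law of the path conditioned on $\{S_{2n}=z\}$'' is false as stated: bridges with distinct endpoints are mutually singular as path laws. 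What you presumably want is a density comparison for the \emph{first half} of the path (the $h$-transform view, with density $p_n(S_n,0)/p_n(S_n,z)$), which is the standard route to a genuine ``KMT for bridges'' theorem, but that requires a midpoint-gluing or half-bridge decomposition and is considerably more involved than the two sentences allotted to it. So the overall plan (KMT plus bridge transfer) is a known and viable alternative, but the specific conditioning-and-recentering shortcut you describe does not close, and the claimed source of the $n^{-1/4}$ exponent is incorrect in both your scheme and the paper's.
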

\begin{proof}
The one dimensional case of the lemma follows from \cite[Lemma~3.1]{LT04}. 
(The result proved there is much stronger than the statement of Lemma~\ref{l:coupling:bridges}, 
the exponent $\frac14$ in the event of the lemma is replaced by $\frac12$ in \cite[Lemma~3.1]{LT04}.)

Fix $d\geq 1$ and $n\geq 1$. 
Let $(X^{i,m}, B^{i,m})_{1\leq i\leq d,\, m\geq 1}$ be independent copies of pairs of one-dimensional random walk and Brownian bridges 
each coupled to satisfy the requirements of Lemma~\ref{l:coupling:bridges} in one dimension, where $X^{i,m}$ is distributed as a random walk bridge of length $2m$. 

We will sample the desired $d$-dimensional random walk bridge by first specifying the choice of coordinate directions for all $2n$ steps, and 
then by choosing the directions along specified coordinates. 
Let $L_n = (L^1_n,\ldots, L^d_n)$ be an independent multinomial sequence of length $2n$ with parameter $(q,\ldots,q)$  
conditioned on all $L^1_n(2n),\ldots L^d_n(2n)$ being even. Namely, 
\[
\P[L_n \in \cdot] = \P\left[\left(\sum_{i=1}^kZ_i;\, 1\leq k\leq 2n\right) \in \cdot~\Big|~\sum_{i=1}^{2n}Z_i\in(2\Z)^d\right],
\]
where $Z_i = (Z_{i,1},\ldots,Z_{i,d})$ are independent and $\P[Z_i = e_1] = \ldots = \P[Z_i = e_d] = q$, with $e_1,\ldots, e_d$ being the canonical basis of $\R^d$. 
Let 
\[
M_1 = L^1_n(2n), \, \ldots,\, M_d = L^d_n(2n)
\]
be the number of jumps of each of the coordinates in $L_n$. Consider 
\begin{align*}
X_k &= X^{1,M_1}_{L^1_n(k)} \, e_1 + \ldots + X^{d,M_d}_{L^d_n(k)}\, e_d,\quad 1\leq k\leq 2n,\\[10pt]
B_t &= B^{1,M_1}_t \, e_1 + \ldots + B^{d,M_d}_t\, e_d,\quad 0\leq t\leq 1.
\end{align*}
Then
\begin{itemize}
\item
$(X_t)_{t\in[0,2n]}$ has the law of the $d$-dimensional random walk bridge,
\item
$(B_t)_{t\in[0,1]}$ has the law of the standard $d$-dimensional Brownian bridge. 
\end{itemize}
It remains to show that this coupling satisfies the bound in the statement of the lemma. 
It will be a consequence of the following two claims. 
\begin{claim}\label{cl:multinomial}
For any $D>0$ there exists $C<\infty$ such that 
\[
\P\left[|L^i_n(k) - qk| \leq C(n\log n)^{\frac12},\,\text{for all $1\leq i\leq d$ and $1\leq k\leq 2n$}\right]\geq 1 - Cn^{-D}. 
\]
\end{claim}
\begin{proof}[Proof of Claim~\ref{cl:multinomial}]
First note that 
\[
\P\left[\sum_{i=1}^{2n}Z_i\in(2\Z)^d\right] \geq \P\left[S_{2n}:= S^{1}_{L^1_n(2n)} \, e_1 + \ldots + S^{d}_{L^d_n(2n)}\, e_d = 0\right]
= p_{2n}(0,0) \stackrel{\mathrm{L.}\ref{l:p2n:expansion}}\geq cn^{-\frac d2},
\]
where $S^1,\ldots, S^d$ are independent copies of one dimensional simple random walk on $\Z$ started from the origin (which implies that $S$ is a $d$-dimensional simple random walk).

On the other hand, by \cite[Corollary~12.2.7]{LL10}, 
for any $D>0$ there exists $C<\infty$ such that for each $1\leq i\leq d$, 
\[
\P\left[\max_{1\leq k\leq 2n}\Big|\sum_{j=1}^kZ_{j,i} - qk\Big| > C(n\log n)^{\frac12}\right]\leq Cn^{-D-\frac d2}. 
\]
The two above estimates together and the definition of $L_n$ give the claim. 
\end{proof}

\begin{claim}\label{cl:coupling:delayedrw}
Let $(\ell(k))_{0\leq k\leq 2n}$ be an integer sequence with 
\begin{itemize}\itemsep0pt
\item
$\ell(0) = 0$, 
\item
for all $1\leq k\leq 2n$, $0\leq \ell(k) - \ell(k-1)\leq 1$, 
\item
for some $1\leq C_\ell<\infty$, $\max_{1\leq k\leq 2n}|\ell(k) - qk| \leq C_\ell(n\log n)^{\frac 12}$, 
\item
$m:= \ell(2n)$ is even. 
\end{itemize}
Let $(X^m,B^m)$ be a pair of one-dimensional random walk bridge of length $m$ and a standard one-dimensional Brownian bridge 
coupled to satisfy the requirements of Lemma~\ref{l:coupling:bridges} in one dimension. 
Let 
\[
\widetilde X_k = X^m_{\ell(k)},\quad 0\leq k\leq 2n
\]
be the time reparametrization of the bridge $X^m$ induced by the sequence $\ell$, and 
$(\widetilde X_{2nt})_{t\in[0,1]}$ its linear interpolation. Then
for any $D>0$, there exists $C<\infty$ (depending on $\ell$ only through $C_\ell$) such that 
\[
\P\left[\sup_{0\leq t\leq 1}\left|\frac{\widetilde X_{2nt}}{\sqrt{2nq}} - B^m_t\right| > C \cdot n^{-\frac14}\log n\right] \leq C\cdot n^{-D}.
\]
\end{claim}
\begin{proof}[Proof of Claim~\ref{cl:coupling:delayedrw}]
Let $D>0$. By assumption on $(X^m,B^m)$ and the fact that $|m-2nq| = o(n)$, there exists $C<\infty$ such that 
\[
\P\left[\sup_{0\leq t\leq 1}\left|\frac{X^m_{mt}}{\sqrt{m}} - B^m_t\right| > C \cdot n^{-\frac14}\log n\right] \leq C\cdot n^{-D}.
\]
Thus, it suffices to show that there exists $C<\infty$ such that 
\[
\P\left[\sup_{0\leq t\leq 1}\left|\frac{\widetilde X_{2nt}}{\sqrt{2nq}} - \frac{X^m_{mt}}{\sqrt{m}}\right| > C \cdot n^{-\frac14}\log n\right] \leq C\cdot n^{-D}.
\]
Since $\sqrt m = \sqrt{2nq}\left(1 + O\left((\frac{\log n}{n})^{\frac12}\right)\right)$, there exist $C,C'<\infty$ such that 
\begin{multline*}
\P\left[\sup_{0\leq t\leq 1}\left|\frac{X^m_{mt}}{\sqrt{2nq}} - \frac{X^m_{mt}}{\sqrt{m}}\right| > C \cdot n^{-\frac14}\log n\right] 
\leq 
\P\left[\sup_{0\leq t\leq 1}\left|X^m_{mt}\right| > C' \cdot n^{\frac34}\log^{\frac 12} n\right] \\
= 
\frac{\P\left[\max_{0\leq k\leq m}\left|S_k\right| > C' \cdot n^{\frac34}\log^{\frac 12} n\right]}{p_m(0,0)}
\leq C\cdot n^{-D},
\end{multline*}
where $S$ is a one-dimensional simple random walk on $\Z$. 
The last inequality follows from Lemma~\ref{l:p2n:expansion} and \cite[Corollary~12.2.7]{LL10}. 

It remains to show that there exists $C<\infty$ such that 
\[
\P\left[\sup_{0\leq t\leq 1}\left|\widetilde X_{2nt} - X^m_{mt}\right| > C \cdot n^{\frac14}\log n\right] \leq C\cdot n^{-D}.
\]
By the definition of $\widetilde X$, the probability on the left hand side equals 
\begin{multline*}
\P\left[\sup_{0\leq t\leq 1}\left|X^m_{\ell(2nt)} - X^m_{mt}\right| > C \cdot n^{\frac14}\log n\right]
\leq \frac{2n\cdot \P\left[\max_{0\leq k\leq 4C_\ell (n\log n)^{\frac12}}\left|S_k\right| > C \cdot n^{\frac14}\log n\right]}{p_m(0,0)}\\
\leq C\cdot n^{-D},
\end{multline*}
where $S$ is again a one-dimensional simple random walk on $\Z$. In the first inequality we used the fact that $|\ell(\lfloor 2nt\rfloor) - mt| \leq 3C_\ell (n\log n)^{\frac12}$, 
and in the second the exponential Markov inequality and Lemma~\ref{l:p2n:expansion}.
By putting all the estimates together we get the claim.
\end{proof}

\bigskip

To complete the proof of Lemma~\ref{l:coupling:bridges}, we call the sequence $L_n=(L_n^1,\ldots,L_n^d)$ $C$-good if each one-dimensional sequence $L^i_n$ 
satisfies the assumptions of Claim~\ref{cl:coupling:delayedrw} with $C_\ell = C$. By Claim~\ref{cl:multinomial}, 
for each $D>0$ there exists $C_1<\infty$ such that $\P[\text{$L_n$ is not $C_1$-good}] \leq C_1 \cdot n^{-D}$. 
By Claim~\ref{cl:coupling:delayedrw} applied to each of the $d$ projections of $X$ and $B$, there exists $C<\infty$ such that 
\[
\P \left[\sup_{0\leq t\leq 1}\left\|\frac{X_{2nt}}{\sqrt{2nq}} - B_t\right\| > C\cdot n^{-\frac14}\log n~\Big|~\text{$L_n$ is $C_1$-good}\right] \leq C n^{-D}.
\]
The two estimates together give the result of the lemma. 
\end{proof}

\subsection{Proof of Theorem~\ref{thm:coupling:soups}}

The main strategy of the proof is similar to that of \cite{LT04}. We will pair-up random walk loops of length $2n\geq N^\theta$ rooted at $z\in [-rN,rN]^d$
and Brownian loops of length between $\frac{2(n-1)}{d} + \blrange$ and $\frac{2n}{d} + \blrange$ and rooted in $z + [-\frac12,\frac12]^d$. 

Fix $\lambda>0$, $r>0$, and $N\geq 1$. For $n\geq 1$, let   
\[
\widetilde q_n = \frac{1}{2n}\cdot p_{2n}(0,0) \stackrel{\mathrm{L.}\ref{l:p2n:expansion}}= 
\frac1n\, \left(\frac{d}{4\pi n}\right)^{\frac d2}\cdot \left(1 - \frac{d}{8n} + O\left(\frac{1}{n^2}\right)\right), 
\]
and 
\[
q_n = \int_{\frac{2(n-1)}{d} + \blrange}^{\frac{2n}{d} + \blrange}\, \frac{ds}{s\cdot (2\pi s)^{\frac d2}} = 
\frac1n\, \left(\frac{d}{4\pi n}\right)^{\frac d2}\cdot \left(1 - \frac{d}{8n} + O\left(\frac{1}{n^2}\right)\right). 
\]
The main reason to define $\blrange$ as in \eqref{def:blrange} is so that the first two terms in the expansions of $\widetilde q_n$ and $q_n$ coincide. 
In particular, 
\begin{equation}\label{eq:qn:difference}
\left|q_n - \widetilde q_n\right| \leq C\cdot n^{-\frac d2 - 3}.
\end{equation}
Let 
\[
\widetilde N(z,n;\cdot),\quad n\in\{1,2,\ldots\}, z\in\Z^d,
\]
be independent Poisson point processes on $(0,\infty)$ with intensity parameter $\widetilde q_n$, and 
\[
N(z,n;\cdot),\quad n\in\{1,2,\ldots\}, z\in\Z^d,
\]
independent Poisson point processes on $(0,\infty)$ with intensity parameter $q_n$.
We couple all these processes so that the pairs of processes 
\[
\left\{\widetilde N(z,n;\cdot), N(z,n;\cdot)\right\},\quad n\in\{1,2,\ldots\}, z\in\Z^d,
\]
are independent and 
\[
\P\left[\widetilde N(z,n;t)\neq N(z,n;t)\right]\leq C\cdot t\cdot n^{-\frac d2 - 3}.
\]
Let 
\[
\widetilde L(z,n;m),\quad n\in\{1,2,\ldots\}, z\in\Z^d, m\in \{1,2,\ldots\},
\]
be independent random walk bridges of length $2n$ rooted at $0$, and 
\[
L(z,n;m),\quad n\in\{1,2,\ldots\}, z\in\Z^d, m\in \{1,2,\ldots\},
\]
independent standard Brownian bridges coupled with the respective $\widetilde L(z,n;m)$ as in Lemma~\ref{l:coupling:bridges}. 
All the $\widetilde L$'s and $L$'s are assumed to be independent from all the $\widetilde N$'s and $N$'s. 

The random walk loop soup $\rwls^\lambda$ of intensity $\lambda$ is defined as in \eqref{def:rwls:2} as the collection (multiset) of loops
\[
\left\{z+\widetilde L(z,n;m)~:~n\in\{1,2,\ldots, \},\, z\in\Z^d,\, m\in \left\{1,\ldots, \widetilde N(z,n;\lambda)\right\}\right\} .
\]

In order to define the Brownian loop soup, we introduce more random variables. Let 
\[
Y(z,n;m),\quad n\in\{1,2,\ldots\}, z\in\Z^d, m\in \{1,2,\ldots\},
\]
be independent random variables uniformly distributed on $[-\frac12,\frac12]^d$, and 
\[
T(z,n;m),\quad n\in\{1,2,\ldots\}, z\in\Z^d, m\in \{1,2,\ldots\},
\]
independent real-valued random variables with density 
\[
\propto\, \frac{1}{t\cdot (2\pi t)^{\frac d2}},\qquad\frac{2(n-1)}{d} + \blrange\leq t\leq \frac{2n}{d} + \blrange.
\]
We assume that the $Y$'s and $T$'s are independent and also independent of all the previously defined variables. 

Finally, consider an independent Brownian loop soup $\bls_\blrange$ of loops of time duration $\leq \blrange$, 
and the corresponding restriction of $\bls_\blrange$ to $\blspace\times(0,\lambda)$, $\bls_\blrange^\lambda$ (the loops ``appearing before time $\lambda$''). 

To generate the Brownian loop soup, we first rescale each loop $L(z,n;m)$ so that its time duration is $T(z,n;m)$ and 
translate so that its root is at $z+ Y(z,n;m)$, obtaining the loop $L^*(z,n;m)$. The Brownian loop soup $\bls^\lambda$ of intensity $\lambda$ is then 
the collection of loops 
\[
\left\{L^*(z,n;m)~:~n\in\{1,2,\ldots, \},\, z\in\Z^d,\, m\in \left\{1,\ldots, N(z,n;\lambda)\right\}\right\}\cup\bls_\blrange^\lambda .
\]

\bigskip

It remains to show that the constructed coupling satisfies all the requirements of the theorem. 
We would like to pair-up random walk loops from the multiset 
\[
\left\{z+\widetilde L(z,n;m)~:~ m\in \left\{1,\ldots, \widetilde N(z,n;\lambda)\right\}\right\}
\]
with Brownian loops from the set 
\[
\left\{L^*(z,n;m)~:~ m\in \left\{1,\ldots, N(z,n;\lambda)\right\}\right\},
\]
for all $n$ and $z$ such that $2n\geq N^\theta$ and $|z|\leq rN$. 

First, we estimate the probability that cardinalities are different, 
\begin{multline*}
\P\left[N(z,n;\lambda)\neq \widetilde N(z,n;\lambda)~\text{for some}~n\geq \frac12N^\theta, |z|\leq rN \right]
\stackrel{\eqref{eq:qn:difference}}\leq C(rN)^d\,\lambda\sum_{2n\geq N^\theta} n^{-\frac d2 - 3} \\
\leq C\,\lambda\,r^d\,N^{d-\theta(\frac d2 + 2)}.
\end{multline*}

Next, we rule out the existence of big loops, 
\[
\P\left[\widetilde N(z,n;\lambda)>0~\text{for some}~n\geq N^3, |z|\leq rN \right]
\leq C(rN)^d\,\lambda\sum_{n\geq N^3} n^{-\frac d2 - 1} \\
\leq C\,\lambda\,r^d\,N^{-\frac d2}.
\]

Now, we bound the probability of having too many loops of a given size rooted at the same vertex, 
\begin{multline*}
\P\left[\widetilde N(z,n;\lambda)\geq N^5~\text{for some}~\frac12N^\theta\leq n\leq N^3, |z|\leq rN \right]
\leq C(rN)^d\,N^3\,\frac{\lambda N^{-\theta(\frac d2 + 1)}}{N^5}\\
\leq C\,\lambda\,r^d\,N^{d-\theta(\frac d2 + 2)},
\end{multline*}
where in the first inequality we used the Markov inequality and the fact that $\widetilde N(z,n;\lambda)$ is 
a Poisson random variable with parameter $\lambda \widetilde q_n$. 

\medskip

By putting all the bounds together and using Lemma~\ref{l:coupling:bridges}, 
\begin{multline*}
\P\left[
\begin{array}{c}\text{there exist loops $L^*(z,n;m)$ and $z+ \widetilde L(z,n;m)$}\\ 
\text{for some $n\geq \frac12 N^\theta$, $|z|\leq rN$, $m\leq \widetilde N(z,n;\lambda)$}\\
\text{such that sup-distance between them is $\geq C_{\ref{l:coupling:bridges}}N^{\frac 34}\log N$}
\end{array}
\right]\\
\leq C\, \lambda\, r^d\, N^{-\min\left(\frac{d}{2},\,\theta(\frac d2+2)-d\right)}
+ C (rN)^d\cdot N^3\cdot N^5\cdot N^{-\theta D} \\
\leq 
C\, (\lambda+1)\, r^d\, N^{-\min\left(\frac{d}{2},\,\theta(\frac d2+2)-d\right)},
\end{multline*}
by choosing $D$ sufficiently large. The proof is complete. \qed

\section{LEW scaling limit + Brownian loop soup = Brownian motion}\label{sec:BMdecompositionproof}

In this section we prove Theorem~\ref{thm:K+BLS=BM} using the Beurling-type estimate of Theorem~\ref{thm:beurling} 
and the strong coupling of loop soups from Theorem~\ref{thm:coupling:soups}. 
We will focus here, without further mentioning, on the three dimensional case. 
The two dimensional case can be treated similarly and often with less effort, so we leave the details to an interested reader. 

\subsection{Preliminaries}\label{sec:BMdecompositionproof:preliminaries}

Recall the notation for random walk loop soups from Section~\ref{sec:rwls}. 
Let $U$ be a connected open subset of $\R^3$ with smooth boundary. 
Let $0<\varepsilon<\delta$ be sufficiently small so that all the finite connected components of $U^c$ have diameter $>\delta$. 
For $n\geq 1$, let $U_n = nU$ and use the same notation for $nU\cap\Z^3$. Let $\lambda>0$. 
The first result states that it is unlikely that there is a loop in $\rwls^\lambda$ of big diameter which is  
contained in $U_n$ and reaches very close to the boundary of $U_n$. 
\begin{proposition}\label{prop:loopsatboundary}
There exists $\alpha>0$ and $C = C(\alpha,U)<\infty$ such that for all $\lambda>0$, $0<\varepsilon<\delta$ as above, 
and $n\geq 1$, 
\begin{equation}\label{eq:loopsatboundary}
\P\left[
\begin{array}{c}
\text{There exists $\ell\in\rwls^\lambda$ with diameter $>\delta n$}\\
\text{such that $\ell\subset U_n$ and $\ell\nsubseteq U_n^{-\varepsilon n}$}
\end{array}
\right]\leq C\,\lambda\,\frac{\varepsilon^{\alpha}}{\delta^5}.
\end{equation}
\end{proposition}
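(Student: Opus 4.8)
The plan is a first-moment bound. Since the loop soup $\rwls^\lambda$ is a Poisson point process with intensity $\lambda\,\rwlm$, the number of ``bad'' loops in it --- a loop being bad if it is contained in $U_n$, has diameter $>\delta n$, and meets the $\varepsilon n$-neighbourhood of $\partial U_n$ --- is Poisson with mean $\lambda$ times the $\rwlm$-measure of the set of bad loops, so the probability in \eqref{eq:loopsatboundary} is at most this mean. All three defining properties depend only on the trace of a loop, not on its root, so by the representation \eqref{def:rwlm} of $\rwlm$ as a superposition of random walk bridge measures the claim reduces to
\[
\sum_{z\in\Z^3}\sum_{m\geq 1}\frac{p_{2m}(0,0)}{2m}\,\P\big[\text{the length $2m$ bridge rooted at $z$ is bad}\big]\;\leq\;C\,\frac{\varepsilon^\alpha}{\delta^5}\,,
\]
uniformly in $n$, for some $\alpha>0$; the factor $\frac1{2m}$ of the loop measure is essential here, as without it the left-hand side would not be bounded in $n$.

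The core of the argument is a Beurling-type estimate which, contrary to the situation for general paths, survives in three dimensions because $\partial U$ is smooth. If the bridge rooted at $z$ is bad, it visits some vertex $w$ with $\mathrm{dist}(w,\partial U_n)\leq\varepsilon n$; letting $p$ be the nearest point of $U_n^c$ to $w$, smoothness of $\partial U$ furnishes an exterior tangent ball $B=B(q,cn)\subset U_n^c$ at $p$, with $c=c(U)>0$. Since the bridge stays in $U_n$ it avoids $B$; it comes within $\varepsilon n$ of $\partial B$ (namely at $w$); and, being connected of diameter $>\delta n$, it reaches distance $>\delta n/2$ from $w$ (for any point of a set of diameter $>\delta n$, the set reaches distance $>\delta n/2$ from that point, by the triangle inequality). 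Decomposing the bridge at the first time it hits $w$ and using the Markov property, one is left to control a random walk that avoids $B$, visits the $\varepsilon n$-neighbourhood of $\partial B$, and travels distance $>\delta n/2$. The key point is that the radial coordinate $|R_t-q|$, as long as it stays of order $n$, behaves like a one-dimensional random walk with a negligible (and outward) drift, so ``avoiding $B$'' amounts to ``staying positive'' and ``coming within $\varepsilon n$ of $\partial B$'' to ``dipping to within $\varepsilon n$ of $0$'' --- events for which a one-dimensional walk loses only a power. (For a generic connected subset of $\R^3$, such as a line segment, this is false --- the loss is only logarithmic --- which is exactly why smoothness of the boundary must be used.)

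Quantitatively one needs only standard bridge facts: a length $2m$ bridge has diameter $>\delta n$ with probability $\leq Ce^{-c(\delta n)^2/m}$, and it must have $m\geq \delta n/2$; a one-dimensional bridge of length of order $m$ from $h$ to $h$ stays positive with probability of order $\min(1,h^2/m)$; and, conditionally on staying positive, its minimum is $\leq a$ with probability $\lesssim\min(1,a/h)$ (and $\lesssim e^{-ch^2/m}$ when $h\gg\sqrt m$). Inserting these, with $h=\mathrm{dist}(z,\partial U_n)$ and $a=\varepsilon n$, one carries out the double sum: the $z$-sum is organised by the value of $\mathrm{dist}(z,\partial U_n)$ (there are $\lesssim n^2$ lattice points at each such distance), and the $m$-sum is dominated by $m\asymp(\delta n)^2$, smaller $m$ being suppressed by the factor $e^{-c(\delta n)^2/m}$. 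After collecting powers, all powers of $n$ cancel and one arrives at the bound $C\,\varepsilon^\alpha\delta^{-5}$ for a suitable $\alpha>0$ (the exponent $5$ of $\delta$ is not sharp).

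The main obstacle is the bookkeeping in the step above: the Markov decomposition of a bad loop at its closest approach to $\partial U_n$ must be arranged so as to \emph{avoid} a union bound over the $\asymp(\delta/\varepsilon)^{2}$ boundary patches that naively seem needed to pin down the relevant exterior ball (such a bound would be far too lossy), and the double sum must be organised so that the contributions of short loops, and of roots near and far from $\partial U_n$, all combine into the single clean bound $C\lambda\varepsilon^\alpha\delta^{-5}$. The one-dimensional bridge estimate used for the ``dip'' also requires a short but non-trivial computation.
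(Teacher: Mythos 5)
Your overall strategy --- bound the probability by the first moment $\lambda\cdot\rwlm[L_n]$ and estimate the loop measure by expanding it over roots $z$ and half-lengths $m$, with the $\frac1{2m}$ factor and a boundary-hitting estimate doing the work --- is the same as the paper's. The two proofs diverge in how the boundary estimate is obtained and in how the double sum is organised, and your version leaves some genuine gaps.

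For the boundary estimate, the paper does not develop an exterior-tangent-ball / radial-coordinate argument; it simply invokes an exit-time estimate (Kozma's Lemma~2.5) which says that a walk started within $\varepsilon n$ of the smooth boundary $\partial U_n$ will exit $U_n$ within time $\frac12\gamma^2 n^2$ except with probability $\leq C(\varepsilon/\gamma)^\eta$, for an intermediate scale $\gamma\in(\varepsilon,\delta)$. Your reduction of the avoidance constraint to the positivity of a one-dimensional radial coordinate is plausible and more self-contained, but as stated it glosses over two points: the radial coordinate $|R_t-q|$ of a three-dimensional walk relative to the centre $q$ of the exterior ball carries a Bessel-type drift of order $1/r\asymp 1/n$, which over time scales $m\asymp(\delta n)^2$ or larger is not automatically negligible against the diffusive fluctuation $\sqrt m$; and the exterior ball only matches $\partial U_n$ to second order near the tangency point, so ``avoiding $U_n^c$'' and ``avoiding $B$'' agree only locally. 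Both can be handled, but neither is addressed.

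Second, and more structurally, you decompose ``at the first time the bridge hits $w$'' and then ``use the Markov property.'' For a \emph{bridge} this is not the Markov property of the walk but of the $h$-transformed walk pinned to return to $z$ at time $2m$, and the transition probabilities are tilted. The paper sidesteps this by time-reversibility: it writes $\P^z[R(2k)=z,\,R[0,2k]\in L_n]\leq 2\,\P^z[T((U_n^{-\varepsilon n})^c)\leq k,\,T(\partial U_n)>\tfrac32 k,\,R(2k)=z]$, so the boundary approach happens in the first half of the walk and the return is charged at time $\tfrac32 k$ by the local CLT, giving the clean $k^{-3/2}$. Your sketch has no analogue of this step, and without it the bridge conditioning contaminates the stopping-time decomposition.

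Third, the paper introduces the intermediate scale $\gamma\in(\varepsilon,\delta)$ and treats $k\leq\gamma^2n^2$ (exponential penalty from the diameter condition) and $k\geq\gamma^2n^2$ (polynomial penalty $(\varepsilon/\gamma)^\eta$ from the boundary estimate, times $k^{-3/2}$) separately, then optimises over $\gamma$ to obtain the $\varepsilon^\alpha\delta^{-5}$ bound. You try to merge the two regimes into a single computation and assert that ``all powers of $n$ cancel,'' but you also acknowledge that the bookkeeping is the main obstacle --- which is an honest assessment, since that bookkeeping is precisely where a naive single-pass summation is likely to lose a power. Introducing the intermediate scale, time-reversibility, and the off-the-shelf exit-time estimate is what makes the paper's computation close cleanly; those are the three pieces missing from your outline.
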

\begin{proof}
Let $\varepsilon<\gamma<\delta$. We will prove that for some $\eta>0$ and $C=C(U,\eta)<\infty$, 
\begin{equation}\label{eq:loopsatboundary:gamma}
\rwlm\left[\ell~:~\text{diameter of $\ell$ is $>\delta n$, $\ell\subset U_n$, and $\ell\nsubseteq U_n^{-\varepsilon n}$}\right]
\leq C\cdot \left(\frac{\gamma^\eta}{\delta^{\eta+3}} + \frac{\varepsilon^\eta}{\gamma^{\eta+3}}\right).
\end{equation}
Optimizing over $\gamma$ then gives that the above measure is $\leq C\,\varepsilon^\alpha\,\delta^{-5}$ for $\alpha = \frac{\eta^2}{2\eta + 3}$. 
Since the probability in \eqref{eq:loopsatboundary} equals 
\[
1 - \exp\left(-\lambda\,\rwlm\left[\ell~:~\text{diameter of $\ell$ is $>\delta n$, $\ell\subset U_n$, and $\ell\nsubseteq U_n^{-\varepsilon n}$}\right]\right),
\]
the result follows. 
Thus, it suffices to prove \eqref{eq:loopsatboundary:gamma}. 
Denote by $L_n$ the set of all loops $\ell$ with diameter $>\delta n$, $\ell\subset U_n$, and $\ell\nsubseteq U_n^{-\varepsilon n}$. By the definition of $\rwlm$, 
\begin{eqnarray*}
\rwlm[L_n] &= &\sum_{z\in \Z^3\cap U_n}\,\sum_{k\geq 1} \frac{1}{2k}\,\P^{z}\left[R(2k) = z,~R[0,2k]\in L_n\right]\\
&\leq &C(U)n^3\cdot \max_{z\in \Z^3\cap U_n}\,\sum_{k\geq 1} \frac{1}{2k}\,\P^{z}\left[R(2k) = z,~R[0,2k]\in L_n\right].
\end{eqnarray*}
Let $z\in \Z^3\cap U_n$. We consider separately two cases: $k\leq \gamma^2 n^2$ and $k\geq \gamma^2 n^2$. 

If $k\leq \gamma^2 n^2$, then by the time reversibility of the random walk, 
\begin{equation}\label{eq:loopsatboundary:1}
\P^{z}\left[R(2k) = z,~R[0,2k]\in L_n\right]
\leq 2\,\P^z\left[T_{z,\frac12\delta n} \leq k,~R(2k)=z\right].
\end{equation}
By the local central limit theorem, for each $y\in\Z^d$ and $m$, 
\[
\P^y\left[R(m) = z\right]\leq C\,m^{-\frac32}\,e^{-\frac{3|y-z|^2}{2m}}.
\]
Using the strong Markov property on exiting from $B(z,\frac12\delta n)$ and the local central limit theorem, one gets from \eqref{eq:loopsatboundary:1} that 
\[
\P^{z}\left[R(2k) = z,~R[0,2k]\in L_n\right]
\leq C\,k^{-\frac32} e^{-\frac{3(\delta n)^2}{16k}} 
\leq C\,e^{-\frac{3}{32}\,\left(\frac{\delta}{\gamma}\right)^2}\,k^{-\frac32} e^{-\frac{3(\delta n)^2}{32k}}.
\]
Thus, 
\[
n^3\,\sum_{k=1}^{\lfloor \gamma^2 n^2\rfloor} \frac{1}{2k}\,\P^{z}\left[R(2k) = z,~R[0,2k]\in L_n\right]
\leq C\,e^{-\frac{3}{32}\,\left(\frac{\delta}{\gamma}\right)^2}\,n^3\,\sum_{k=1}^\infty k^{-\frac52} e^{-\frac{3(\delta n)^2}{32k}}
\leq C'\,e^{-\frac{3}{32}\,\left(\frac{\delta}{\gamma}\right)^2}\,\frac{1}{\delta^3}. 
\]
Note that for any $\eta>0$ there exists $C = C(\eta)<\infty$ such that $e^{-\frac{3}{32}\,\left(\frac{\delta}{\gamma}\right)^2}\leq C\,\left(\frac{\gamma}{\delta}\right)^\eta$. 
Thus, we have the first part of the bound in \eqref{eq:loopsatboundary:gamma}.

Next we consider $k\geq \gamma^2 n^2$. By the time reversibility of the random walk, 
\begin{equation}\label{eq:loopsatboundary:2}
\P^{z}\left[R(2k) = z,~R[0,2k]\in L_n\right] \leq 
2\,\P^z\left[T\left( \left(U_n^{-\varepsilon n}\right)^c\right) \leq k,~T(\partial U_n) > \frac32k,~R(2k) = z\right],
\end{equation}
It is well known (see, e.g., \cite[Lemma~2.5]{K}) that there exist $\eta>0$ and $C=C(U)<\infty$ such that 
for all $y\in U_n\setminus U_n^{-\varepsilon n}$,  
\[
\P^y\left[T(\partial U_n)>\frac12\gamma^2 n^2\right] \leq C\left(\frac{\varepsilon}{\gamma}\right)^\eta .
\]
By the strong Markov property at time $T\left(\left( U_n^{-\varepsilon n}\right)^c\right)$ and the above inequality, as well as 
the Markov property at time $\frac32k$ and the local central limit theorem, 
one gets from \eqref{eq:loopsatboundary:2} that 
\[
\P^{z}\left[R(2k) = z,~R[0,2k]\in L_n\right]
\leq C\,\left(\frac{\varepsilon}{\gamma}\right)^\eta\,k^{-\frac32} .
\]
Thus, 
\[
n^3\,\sum_{k\geq \lfloor \gamma^2 n^2\rfloor} \frac{1}{2k}\,\P^{z}\left[R(2k) = z,~R[0,2k]\in L_n\right]
\leq C\,\left(\frac{\varepsilon}{\gamma}\right)^\eta\,n^3\,\sum_{k\geq \lfloor \gamma^2 n^2\rfloor} k^{-\frac52}
\leq C'\,\left(\frac{\varepsilon}{\gamma}\right)^\eta\,\frac{1}{\gamma^3}. 
\]
This gives us the second part of the bound in \eqref{eq:loopsatboundary:gamma}. 
The proof of Proposition~\ref{prop:loopsatboundary} is complete. 
\end{proof}

\medskip

Consider a simple random walk $R^1$ from $0$ and an independent random walk loop soup $\rwls^\lambda$ defined on the same probability space. 
\begin{proposition}\label{prop:loopsatlew}
There exists $\alpha>0$ and $C = C(\alpha)<\infty$ such that for all $\lambda>0$, $0<\varepsilon<\delta$, and $n\geq 1$, 
\begin{equation}\label{eq:loopsatlew}
\P\left[
\begin{array}{c}
\text{There exists $\ell\in\rwls^\lambda$ with diameter $>\delta n$ such that $\ell\subset B(0,n)$,}\\
\text{$\mathrm{dist}(\ell,\loe(R^1[0,T^1_{0,n}]))<\varepsilon n$ and $\ell\cap \loe(R^1[0,T^1_{0,n}])=\emptyset$}
\end{array}
\right]\leq C\,\left(\sqrt{\varepsilon} + \lambda\,\frac{\varepsilon^{\alpha}}{\delta^5}\right).
\end{equation}
\end{proposition}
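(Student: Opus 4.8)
The plan is to run the proof of Proposition~\ref{prop:loopsatboundary} almost verbatim, with the Beurling-type estimate of Theorem~\ref{thm:beurling} replacing the escape-time bound \cite[Lemma~2.5]{K} used there. Write $L=\loe(R^1[0,T^1_{0,n}])$ and let $A_n$ be the set of loops $\ell$ with diameter $>\delta n$, $\ell\subset B(0,n)$, $\mathrm{dist}(\ell,L)<\varepsilon n$ and $\ell\cap L=\emptyset$ --- the loops forbidden in \eqref{eq:loopsatlew}. Apply Theorem~\ref{thm:beurling} with $\sqrt\varepsilon$ in place of $\varepsilon$: there are $\eta>0$, $C<\infty$ and an event $\mathcal G=\mathcal G(R^1)$ with $\P^{1,0}[\mathcal G]\ge 1-C\sqrt\varepsilon$ on which $\P^{2,x}[R^2[0,T^2_{x,\varepsilon^{1/4}n}]\cap L=\emptyset]\le\varepsilon^{\eta/2}$ for every $x\in B(0,n)$ with $\mathrm{dist}(x,L)\le\varepsilon n$. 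Since $\rwls^\lambda$ is independent of $R^1$, the Poisson bound for the number of loops of $\rwls^\lambda$ lying in $A_n$ gives
\[
\P[\text{there is a loop as in \eqref{eq:loopsatlew}}]\ \le\ \P[\mathcal G^c]+\E\big[\lambda\,\rwlm[A_n]\,;\,\mathcal G\big]\ \le\ C\sqrt\varepsilon+\lambda\,\sup_{\mathcal G}\rwlm[A_n],
\]
so it remains to show $\rwlm[A_n]\le C\varepsilon^\alpha\delta^{-5}$ on $\mathcal G$ for some $\alpha>0$. We may assume $\varepsilon\le\delta^5$; otherwise $\varepsilon^\alpha\delta^{-5}$ is bounded below and the claim follows from the crude bound $\rwlm[A_n]\le\rwlm[\{\text{loops of diameter}>\delta n\text{ in }B(0,n)\}]\le C\delta^{-3}$.

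To estimate $\rwlm[A_n]$ on $\mathcal G$, expand $\rwlm$ over roots and lengths as in Section~\ref{sec:rwls} and bound the number of roots by $Cn^3$:
\[
\rwlm[A_n]\ \le\ Cn^3\max_{z\in B(0,n)}\sum_{k\ge1}\frac1{2k}\,\P^z\big[R(2k)=z,\ R[0,2k]\in A_n\big],
\]
and split the $k$-sum at $k=\gamma^2 n^2$ for a parameter $\gamma\in(\varepsilon,\delta)$ to be optimized. For $k\le\gamma^2 n^2$ the loop reaches diameter $>\delta n$ in at most $\gamma^2 n^2$ steps, a large-deviation event handled exactly as in Proposition~\ref{prop:loopsatboundary} (time reversal, strong Markov at $T_{z,\delta n/2}$, local central limit theorem), contributing $\le Ce^{-c(\delta/\gamma)^2}\delta^{-3}\le C(\gamma/\delta)^\eta\delta^{-3}$. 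For $k\ge\gamma^2 n^2$, time reversal bounds the term by $2\,\P^z[\sigma\le k,\ T(\partial B(0,n))>\tfrac32 k,\ R[0,2k]\cap L=\emptyset,\ R(2k)=z]$ with $\sigma=\inf\{t:\mathrm{dist}(R(t),L)\le\varepsilon n\}$; applying the Markov property at $\tfrac32 k$ and then the strong Markov property at $\sigma$, and using on $\mathcal G$ that a walk from $x=R(\sigma)$ exits $B(x,\varepsilon^{1/4}n)$ before hitting $L$ with probability $\le\varepsilon^{\eta/2}$ (the complementary event, that it does not exit this ball within the $\ge\tfrac12\gamma^2 n^2\gg(\varepsilon^{1/4}n)^2$ steps still available, contributing a Gaussian error $\le e^{-c\gamma^2/\varepsilon^{1/2}}\le\varepsilon^{\eta/2}$ for $\varepsilon\le\delta^5$), this term is $\le C\varepsilon^{\eta/2}k^{-3/2}$; summing gives a contribution $\le C\varepsilon^{\eta/2}\gamma^{-3}$. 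Hence $\rwlm[A_n]\le C\big(\gamma^\eta\delta^{-\eta-3}+\varepsilon^{\eta/2}\gamma^{-3}\big)$ on $\mathcal G$, and optimizing over $\gamma$ exactly as in Proposition~\ref{prop:loopsatboundary} yields $\rwlm[A_n]\le C\varepsilon^\alpha\delta^{-5}$ with $\alpha$ a positive multiple of $\eta^2$, which together with the display above proves \eqref{eq:loopsatlew}.

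The one substantive new ingredient over Proposition~\ref{prop:loopsatboundary} is that the Beurling-type estimate holds only off an $R^1$-event of probability $C\sqrt\varepsilon$, which is the source of the $\sqrt\varepsilon$ term and the reason for conditioning on $\mathcal G$; this is also where I expect the real care to be needed. After the strong Markov property at $\sigma$ the tail $R[\sigma,\cdot]$ is an ordinary (unconditioned) walk from $x$, so Theorem~\ref{thm:beurling}, a statement about the free walk, applies directly: the event ``exit $B(x,\varepsilon^{1/4}n)$ before meeting $L$'' is the complement of the quantity we bound, not a conditioning, and requiring the loop to stay in $B(0,n)$ only lowers the probability, so no distortion arises --- in particular loops near $\partial B(0,n)$ (where $L$ itself approaches the boundary) need no separate treatment, though one could also discard them beforehand via Proposition~\ref{prop:loopsatboundary} with $U=D$. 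The remaining work --- the time-reversal manipulations and matching the cut-off $\tfrac32 k$ with the $\ge\tfrac12\gamma^2 n^2$ steps left after $\sigma$ so that the exponential error beats $\varepsilon^{\eta/2}$ --- is of the same nature as the computation already carried out for Proposition~\ref{prop:loopsatboundary}.
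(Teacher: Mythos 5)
Your proposal is correct and follows essentially the same route as the paper: apply Theorem~\ref{thm:beurling} with $\sqrt\varepsilon$ in place of $\varepsilon$ to obtain an event $\mathcal G$ of probability $\geq 1-C\sqrt\varepsilon$, then bound $\mathds{1}_{\mathcal G}\,\rwlm[A_n]$ by expanding over roots and lengths, splitting at $k=\gamma^2 n^2$, and using time reversal, the Markov property at $\tfrac32 k$, the strong Markov property at the first approach time to $L$, and the Beurling estimate. The one place you diverge is the treatment of the event that $R^2$ lingers too long near $L$ before either escaping the $\varepsilon^{1/4}n$-ball or hitting $L$: you invoke a Gaussian tail $e^{-c\gamma^2/\varepsilon^{1/2}}$ and assert it is $\leq \varepsilon^{\eta/2}$, whereas the paper simply applies the Markov inequality to the exit time, giving a clean polynomial term $C\sqrt\varepsilon/\gamma^2$ that enters the optimization as $\sqrt\varepsilon/\gamma^5$ alongside $\varepsilon^\eta/\gamma^3$ and $\gamma/\delta^4$. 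Your exponential bound does work for $\varepsilon$ small, and the borderline cases can be absorbed into constants (or into the freedom to decrease $\eta$), but as written the claim "$e^{-c\gamma^2/\varepsilon^{1/2}}\leq\varepsilon^{\eta/2}$ for $\varepsilon\leq\delta^5$" depends on $\gamma$, which you only fix afterwards; carrying the explicit Markov term, as the paper does, avoids this circularity and gives the stated $\alpha=\eta/4$ directly. This is a presentational rather than a substantive difference.
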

\begin{proof}
Let $\eta>0$ and define the event 
\[
A = \left\{
\begin{array}{c}
\text{For all $x\in B(0,n)$ with $\mathrm{dist}(x,\loe(R^1[0,T^1_{0,n}]))\leq \varepsilon n$,}\\
\P^{2,x}\left[R^2[0,T^2_{x,\varepsilon^{\frac14}n}]\cap\loe(R^1[0,T^1_{0,n}])=\emptyset\right]<\varepsilon^\eta
\end{array}
\right\}.
\]
By Theorem~\ref{thm:beurling}, there exists $\eta>0$ and $C=C(\eta)<\infty$ such that $\P^{1,0}[A^c]\leq C\,\sqrt{\varepsilon}$. 
This gives the first part of the bound in \eqref{eq:loopsatlew}, and it remains to estimate the probability of the event in \eqref{eq:loopsatlew} intersected with $A$. 
By the definition of Poisson point process and independence of $R^1$ and $\rwls^\lambda$, this probability equals 
\begin{multline*}
\E^{1,0}\left[\mathds{1}_A\cdot \left(1 - \exp\left(-\lambda\rwlm\left[
\ell~:~ 
\begin{array}{c}
\text{diameter of $\ell$ is $>\delta n$, $\ell\subset B(0,n)$,}\\ 
\mathrm{dist}(\ell,\loe(R^1[0,T^1_{0,n}]))<\varepsilon n,\\
\text{and $\ell\cap \loe(R^1[0,T^1_{0,n}])=\emptyset$}
\end{array}
\right]\right)\right)\right]\\
\leq 
\E^{1,0}\left[\mathds{1}_A\cdot \lambda\rwlm\left[
\ell~:~ 
\begin{array}{c}
\text{diameter of $\ell$ is $>\delta n$, $\ell\subset B(0,n)$,}\\ 
\mathrm{dist}(\ell,\loe(R^1[0,T^1_{0,n}]))<\varepsilon n,\\
\text{and $\ell\cap \loe(R^1[0,T^1_{0,n}])=\emptyset$}
\end{array}
\right]\right]\\
=:
\E^{1,0}\left[\mathds{1}_A\cdot \lambda\rwlm[L_n]\right],
\end{multline*}
where we denoted by $L_n$ the set of loops $\ell\subset B(0,n)$ with diameter $>\delta n$ and such that 
$\mathrm{dist}(\ell,\loe(R^1[0,T^1_{0,n}]))<\varepsilon n$ and $\ell\cap \loe(R^1[0,T^1_{0,n}])=\emptyset$.
It suffices to prove that for some $\alpha>0$ and $C=C(\alpha)<\infty$, 
\[
\mathds{1}_A\cdot\rwlm[L_n]\leq  C\,\frac{\varepsilon^{\alpha}}{\delta^5}.
\]
By the definition of $\rwlm$, 
\begin{multline*}
\rwlm[L_n]
=
\sum_{z\in \Z^3\cap B(0,n)}\,\sum_{k\geq 1} \frac{1}{2k}\,
\P^{2,z}\left[R^2(2k) = z,~ R^2[0,2k]\in L_n\right]\\
\leq 
C\,n^3\,\max_{z\in \Z^3\cap B(0,n)}\,\sum_{k\geq 1} \frac{1}{2k}\,\P^{2,z}\left[R^2(2k) = z,~ R^2[0,2k]\in L_n\right].
\end{multline*}
Let $\varepsilon<\gamma<\delta$ be some paramter to be fixed later. 
We first consider the case $k\leq \gamma^2n^2$. 
Exactly as in the proof of Proposition~\ref{prop:loopsatboundary} (see below \eqref{eq:loopsatboundary:1}), there exists $C<\infty$ such that 
\[
n^3\,\sum_{k=1}^{\lfloor \gamma^2n^2\rfloor} \frac{1}{2k}\,
\P^{2,z}\left[R^2(2k) = z,~ R^2[0,2k]\in L_n\right]\leq C\,\frac{\gamma}{\delta}\cdot\frac{1}{\delta^3}.
\]
We consider next the case $k\geq \gamma^2n^2$. 
Define $S = \loe(R^1[0,T^1_{0,n}])$. 
By the time reversibility of the random walk, 
\begin{multline*}
\P^{2,z}\left[R^2(2k) = z,~R^2[0,2k]\in L_n\right]\\ 
\leq 
2\,\P^{2,z}\left[R^2[0,2k]\subset B(0,n),~\mathrm{dist}(R^2[0,k],S)\leq \varepsilon n,~R^2[0,\frac32k]\cap S=\emptyset,~R^2(2k) = z\right].
\end{multline*}
Let $T^*=\min\{t:\mathrm{dist}(R^2[0,t],S)\leq \varepsilon n\}$ and $T^{**} = \inf\{t>T^*:R^2(t)\notin B(R^2(T^*),\varepsilon^{\frac14}n)\}$. 
By the Markov inequality, 
\[
\P^{2,z}\left[T^*<\infty,~T^{**}>T^* + \frac12\gamma^2n^2\right]\leq C \frac{\sqrt{\varepsilon}}{\gamma^2}, 
\]
and by the definition of the event $A$, 
\[
\mathds{1}_A\cdot \P^z\left[T^*<\infty,~R^2(T^*)\in B(0,n),~R^2[T^*,T^{**}]\cap S=\emptyset\right]
\leq \varepsilon^{\eta}. 
\]
Combining these bounds with the Markov property at time $\frac32k$ and the local central limit theorem, we obtain that 
for each $k\geq \gamma^2 n^2$, 
\[
\mathds{1}_A\cdot \P^{2,z}\left[R^2(2k) = z,~ R^2[0,2k]\in L_n\right]\leq 
C\,\left(\frac{\sqrt{\varepsilon}}{\gamma^2} + \varepsilon^\eta\right)\, k^{-\frac32}. 
\]
Thus, 
\begin{multline*}
\mathds{1}_A\cdot n^3\,\sum_{k\geq \lfloor \gamma^2n^2\rfloor} \frac{1}{2k}\,
\P^{2,z}\left[R^2(2k) = z,~ R^2[0,2k]\in L_n\right]\\
\leq
C\,\left(\frac{\sqrt{\varepsilon}}{\gamma^2} + \varepsilon^\eta\right)\,n^3\,\sum_{k\geq \lfloor \gamma^2n^2\rfloor} k^{-\frac52}
\leq C'\,\left(\frac{\sqrt{\varepsilon}}{\gamma^2} + \varepsilon^\eta\right)\,\frac{1}{\gamma^3}.
\end{multline*}
Putting the two cases together, we get 
\[
\mathds{1}_A\cdot\rwlm[L_n]\leq
C\,\left(\frac{\gamma}{\delta^4} + \left(\frac{\sqrt{\varepsilon}}{\gamma^2} + \varepsilon^\eta\right)\,\frac{1}{\gamma^3}\right).
\]
If $\gamma = \varepsilon^{\frac{\eta}{4}}\delta^{1-\frac{\eta}{4}}$, then the last expression is $\leq C'\,\frac{\varepsilon^\alpha}{\delta^5}$ for $\alpha = \frac{\eta}{4}$. 
This gives the second part of the bound in \eqref{eq:loopsatlew}. 
The proof of Proposition~\ref{prop:loopsatlew} is complete. 
\end{proof}

\subsection{Proof of Theorem~\ref{thm:K+BLS=BM}}\label{sec:BMdecompositionproof:proof}

Let $\bls_D$ be the restriction of an independent Brownian loop soup of intensity $1$ to the loops entirely contained in $D$. 
Denote by $X$ the random subset of $\overline D$ consisting of $\koz$ and all the loops from $\bls_D$ that intersect $\koz$. 
First of all, note that $X$ is closed. Indeed, for any $\varepsilon>0$, the set of loops in $\bls_D$ with diameter bigger than $\varepsilon$ 
is almost surely finite. Thus, the complement of any $\varepsilon$-neighborhood of $\koz$ in $X$ is closed. 
Since $\koz$ is closed, $X$ is also closed. 

Let $\bm$ be the trace of the Brownian motion killed on exiting from $D$ viewed as a compact subset of $\overline D$. 
We need to prove that $X$ has the same law as $\bm$ in $(\mathcal K_D,d_H)$. 
Let $(\Omega,\mathcal F,\P)$ be a probability space large enough to contain all the random variables used in this proof. 
It suffices to prove that 
\[
\P[X\subset U\cap \overline D] = \P[\bm\subset U\cap \overline D],\qquad\text{for all open $U\subset \R^3$.}
\]
Since every bounded open set can be approximated from inside by a finite union of open balls, 
which itself can be approximated from inside by an open set with smooth boundary, 
it suffices to prove the above identity only for sets $U$ with smooth boundary. 

\medskip

For a (measurable) set $S\subset \R^3$ and a (countable) collection of loops $L$ in $\R^3$, let 
$E(S,L)$ be the union of $S$ and all the loops from $L$ that intersect $S$, the {\it enlargement} of $S$ by $L$. 
In particular, $X = E(\koz,\bls_D)$. Also for $\delta>0$, let $L^{>\delta}$ be the subcollection of all the loops from $L$ 
with diameter $>\delta$.

Let $\ls_n$ be the restriction of the random walk loop soup of intensity $1$ rescaled by $\frac1n$ to $D$, i.e., 
$\ls_n = \{\frac1n\ell~:~\ell\in\rwls^1,~\frac1n\ell\subset D\}$. 
By Proposition~\ref{prop:RWdecomposition}, if $\lew_n$ and $\ls_n$ are independent, then $E(\lew_n,\ls_n)$ has the same law as the trace of a simple random walk on $\frac1n\Z^3$
killed on exiting from $D$. 
In particular, as $n\to\infty$, $E(\lew_n,\ls_n)$ converges weakly in the space $(\mathcal K_D,d_H)$ to the Brownian motion $\bm$. 

\medskip

Let $U$ be an open subset of $\R^3$ with a smooth boundary and such that $0\in U$. 
Fix $0<\varepsilon<\delta$.
We now define the random objects that will be used in the proof. 

Since the space $(\mathcal K_D,d_H)$ is separable and $\lew_n$ converges weakly to $\koz$, 
by Skorokhod's representation theorem we can define $(\lew_n)_{n\geq 1}$ and $\koz$ on the same probability space 
so that $d_H(\lew_n,\koz)\to 0$ almost surely. Consider the event
\[
A_0=\left\{\koz\subseteq U^{-3\delta}\right\}\cup\{K\nsubseteq U\}
\]
that if $\koz$ is in $U$ then the distance from $\koz$ to the complement of $U$ is $>3\delta$. By monotonicity, $\P[A_0]\to 1$ as $\delta\to0$. 
For each $n\geq 1$, consider also the event
\[
A_{1,n} = \left\{d_H(\lew_n,\koz)<\varepsilon\right\}.
\]
By construction, $\P[A_{1,n}]\to 1$ as $n\to\infty$.

For each $n\geq 1$, let $(\rwls^1_n,\bls^1_n)$ be independent pairs of the rescaled random walk loop soup of intensity $1$ on $\frac1n\Z^3$ and 
the Brownian loop soup of intensity $1$ on $\R^3$ coupled so that on an event $A_{2,n}$ of probability $>1-Cn^{-\frac12}$ there is a one-to-one correspondence between the loops from $\rwls^1_n$ 
of diameter $>\delta$ rooted in $D_2$ and those from $\bls^1_n$ rooted in $D_2$ and so that the Hausdorff distance between 
the paired loops is $<\varepsilon$. This coupling is possible by Theorem~\ref{thm:coupling:soups}. (In Theorem~\ref{thm:coupling:soups} we paired loops of sufficiently large length, but 
each loop of length $s$ is of diameter of order $\sqrt{s}$.) 
We also assume that all the pairs $(\rwls^1_n,\bls^1_n)$ are independent from $(\lew_n)_{n\geq 1}$ and $\koz$. 

In addition, for each $n\geq 1$, we consider the event $A_{3,n}$ that every loop from $\rwls^1_n$ with diameter $>\delta$ which is contained in $(U\cap D)^{+4\varepsilon}$ 
is also contained in $(U\cap D)^{-4\varepsilon}$, and the event $A_{4,n}$ that every loop from $\rwls^1_n$ with diameter $>\delta$ at distance $<4\varepsilon$ from $\lew_n$ 
intersects $\lew_n$. 
In other words, in the event $A_{3,n}$ there are no big loops that are contained in $(U\cap D)^{+4\varepsilon}$ and get too close to the boundary of $U\cap D$, 
and in the event $A_{4,n}$ there are no big loops that get too close to the $\lew_n$ without hitting it. 
It is proved in Propositions~\ref{prop:loopsatboundary} and \ref{prop:loopsatlew} that for some $\alpha>0$, $\inf_n\P[A_{3,n}\cap A_{4,n}]\geq 1 - C\,\frac{\varepsilon^\alpha}{\delta^5}$. 

Note that in the event $A_{2,n}\cap A_{3,n}$, for every loop from $\rwls^1_n$ with diameter $>\delta$ contained in $D$, its pair from $\bls^1_n$ is contained in $D$, and vice versa. 

We call the restriction of $\rwls^1_n$ to the loops contained in $D$ by $\ls_n$, and 
the restriction of $\bls^1_n$ to the loops contained in $D$ by $\bs_n$. 

\medskip

We prove that for any $n\geq 1$, on the event $A_n = A_0\cap A_{1,n}\cap A_{2,n}\cap A_{3,n}\cap A_{4,n}$, 
\begin{equation}\label{eq:lew:K:inclusion}
\left\{E(\lew_n,\ls_n)\subset U^{-3\varepsilon}\cap\overline D\right\} \subseteq \left\{E(\koz^{+2\varepsilon},\bs_n)\subset U\cap\overline D\right\} \subseteq 
\left\{E(\lew_n,\ls_n)\subset U^{+\delta}\cap\overline D\right\}.
\end{equation}
We begin with a proof of the first inclusion. Assume that $E(\lew_n,\ls_n)\subset U^{-3\varepsilon}$. In particular, $\lew_n\subset U^{-3\varepsilon}$. 
Since $A_{1,n}$ holds, this implies that $\koz\subset U^{-2\varepsilon}$, which is equivalent to $\koz^{+2\varepsilon}\subset U$. 

Let $\ell\in \bs_n^{>\delta}$ be such that $\ell\cap \koz^{+2\varepsilon}\neq\emptyset$. 
Then, since $A_{1,n}$ occurs, $\ell\cap \lew_n^{+3\varepsilon}\neq\emptyset$. 
Since $A_{2,n}\cap A_{3,n}$ occurs, there is $\widetilde \ell\in\ls_n^{>\delta}$ such that $d_H(\widetilde \ell,\ell)<\varepsilon$. 
In particular, $\widetilde \ell\cap\lew_n^{+4\varepsilon}\neq\emptyset$. Since $A_{4,n}$ occurs, this implies that $\widetilde \ell\cap \lew_n\neq \emptyset$. 
By our assumption, $\widetilde \ell \subset U^{-3\varepsilon}$. Therefore, $\ell\subset U^{-2\varepsilon}\subset U$. 
Thus, any $\ell\in \bs_n^{>\delta}$ such that $\ell\cap \koz^{+2\varepsilon}\neq\emptyset$ is contained in $U$. 
This implies that $E(\koz^{+2\varepsilon},\bs_n^{>\delta})\subset U$. 
Since $A_0$ occurs, the above is true if and only if $E(\koz^{+2\varepsilon},\bs_n)\subset U$.

We proceed with the proof of the second inclusion in \eqref{eq:lew:K:inclusion}. 
Assume that $E(\koz^{+2\varepsilon},\bs_n)\subset U$. Since $A_0$ occurs, this is equivalent to $E(\koz^{+2\varepsilon},\bs_n^{>\delta})\subset U$. 
Since $A_{1,n}$ occurs and $\koz^{+2\varepsilon}\subset U$, we also have $\lew_n\subset U$. 

Let $\widetilde \ell\in\ls_n^{>\delta}$ such that $\widetilde \ell\cap\lew_n\neq\emptyset$. 
Since $A_{1,n}$ occurs, $\widetilde \ell\cap \koz^{+\varepsilon} \neq \emptyset$. 
Since $A_{2,n}\cap A_{3,n}$ occurs, there is $\ell\in\bs_n^{>\delta}$ such that $d_H(\widetilde \ell,\ell)<\varepsilon$. 
In particular, $\ell\cap \koz^{+2\varepsilon}\neq\emptyset$. 
By assumption, $\ell\subset U$. Therefore, $\widetilde \ell\subset U^{+\varepsilon}$. 
Since $A_{3,n}$ occurs, we actually have $\widetilde \ell\subset U$. 
Thus, any $\widetilde \ell\in \ls_n^{>\delta}$ such that $\widetilde \ell\cap \lew_n \neq\emptyset$ is contained in $U$. 
This implies that $E(\lew_n,\ls_n^{>\delta})\subset U$. 
Finally, by adding all the loops of diameter $<\delta$ we get $E(\lew_n,\ls_n)\subset U^{+\delta}$. 
The proof of the inclusion \eqref{eq:lew:K:inclusion} is complete. 

\bigskip

It follows from \eqref{eq:lew:K:inclusion} that for all $n\geq 1$, $0<\varepsilon<\delta$, 
\begin{multline*}
\P\left[E(\lew_n,\ls_n)\subset U^{-3\delta}\cap\overline D\right] - \P[A_n^c] \leq 
\P\left[E(\koz^{+2\varepsilon},\bls_D)\subset U\cap\overline D\right] \\
\leq \P\left[E(\lew_n,\ls_n)\subset U^{+\delta}\cap\overline D\right] + \P[A_n^c].
\end{multline*}
By monotonicity, 
\[
\lim_{\varepsilon\to 0}\P\left[E(\koz^{+2\varepsilon},\bls_D)\subset U\cap\overline D\right] = 
\P\left[E(\koz,\bls_D)\subset U\cap\overline D\right] = \P[X\subset U\cap\overline D]. 
\]
Since 
$\limsup_{\delta\to 0}\limsup_{\varepsilon\to 0}\limsup_{n\to\infty}\P[A_n^c] = 0$,
we have
\[
\liminf_{\delta\to 0}\P\left[\bm\subset U^{-3\delta}\cap\overline D\right]\leq  \P\left[X\subset U\cap\overline D\right] 
\leq \limsup_{\delta\to 0}\P\left[\bm\subset U^{+\delta}\cap\overline D\right]. 
\]
Since by monotonicity both left and right hand sides are equal to $\P\left[\bm\subset U\cap\overline D\right]$, 
we get the desired result. 
\qed

\section{Scaling limit is a simple path}\label{sec:simplepath}

\subsection{Quasi-loops}\label{sec:quasiloops}

We say that a nearest neighbor path $\gamma$ in $\Z^3$ has a $(s,r)$-{\it quasi-loop} at $v\in\Z^3$ 
if there exist $i\leq j$ such that $\gamma(i),\gamma(j)\in B(v,s)$ and $\gamma[i,j]\nsubseteq B(v,r)$. 
The set of all such $v$'s is denoted by $\ql(s,r;\gamma)$. 
Note that the set $\ql(s,r; \gamma)$ is non-increasing in $r$ and non-decreasing in $s$. 

\begin{theorem}\label{thm:ql}
There exist $M<\infty$, $\alpha>0$, and $C<\infty$ such that for any $\varepsilon>0$ and $n\geq 1$, 
\begin{equation}\label{eq:ql}
\P^0\left[
\ql\left(\varepsilon^Mn,\sqrt{\varepsilon}n;~\loe(R[0,T_{0,n}])\right)\neq\emptyset
\right]\leq C\varepsilon^\alpha.
\end{equation}
\end{theorem}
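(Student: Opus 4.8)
The plan is to adapt Schramm's two-dimensional argument \cite[Lemma~3.4]{Sch}: one rules out quasi-loops by showing that the loop erased walk rarely returns close to its own past after a large excursion. The Beurling projection principle used there fails in three dimensions, and is replaced by the hittability estimate Lemma~\ref{l:4.6}; the feature absent from \cite{K} is that the whole estimate must be uniform in $n$.

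First I would reformulate. Write $L=\loe(R[0,T_{0,n}])$ for the LERW parametrised by its own steps. From the definition of $\ql$ one checks that $\ql(\varepsilon^Mn,\sqrt\varepsilon n;L)\neq\emptyset$ forces the existence of times $\tau_1<\tau_3$ with $|L(\tau_1)-L(\tau_3)|\le 2\varepsilon^Mn$ and with $L[\tau_1,\tau_3]$ leaving $B(L(\tau_3),\frac12\sqrt\varepsilon n)$, and conversely such a pair of times yields a quasi-loop at $v=L(\tau_3)$ at comparable scales. So it is enough to bound the probability of the event $Q$ that such a pair exists. I would dispose of two regimes first: if $L(\tau_3)\in B(0,\sqrt\varepsilon n)$, run the argument below with Lemma~\ref{l:kozma:4.6} (hittability in annuli centred at the origin) in place of Lemma~\ref{l:4.6}; if $L(\tau_3)$ is within $\sqrt\varepsilon n$ of $\partial B(0,n)$, use that there the LERW stays close to the underlying walk together with a boundary-hitting estimate in the spirit of Proposition~\ref{prop:loopsatboundary}. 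Below $q:=L(\tau_3)$ is a bulk point.

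The mechanism is this. Let $a_q$ be the last visit of $R$ to $q$. Conditionally on $R[0,a_q]$ and on $\{a_q\text{ is the last visit to }q\}$, the continuation $R[a_q,T_{0,n}]$ is a simple random walk from $q$ conditioned not to return to $q$; it is independent of $\loe(R[0,a_q])=L[0,\tau_3]$, and for $q$ to lie on $L$ it must in fact avoid $L[0,\tau_3-1]$. Now, on $Q$, the portion of $L[0,\tau_3-1]$ up to its first visit to $B(q,2\varepsilon^Mn)$ --- the ``approach strand'' --- lies within $2\varepsilon^Mn$ of $q$ and crosses the annulus $B(q,\frac12\sqrt\varepsilon n)\setminus B(q,2\varepsilon^Mn)$, while the continuation, whose loop erasure $L[\tau_3,\cdot]$ reaches $\partial B(0,n)$, must exit $B(q,\frac12\sqrt\varepsilon n)$. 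Lemma~\ref{l:4.6}, used in its full $[0,\sigma]$-truncated form (which is precisely the situation it is designed for) together with a union bound over the centres of a $2\varepsilon^Mn$-net --- the exceptional probability being made negligible by taking the exponent $K$ in the lemma large --- produces a high-probability event on which a walk from $q$ hits the approach strand before leaving $B(q,\frac12\sqrt\varepsilon n)$ with probability $\ge 1-C\varepsilon^{(M-1/2)\eta}$; by the Harnack inequality and division by the (bounded-below) probability that a walk from $q$ does not return to $q$, the conditional probability that the continuation avoids the approach strand, hence that $q$ survives loop erasure, is at most $C\varepsilon^{(M-1/2)\eta}$ for some $\eta>0$.

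Inserting this into a last-exit decomposition over the candidate endpoints $q$ bounds $\P[Q]$ by $C\varepsilon^{(M-1/2)\eta}$ times a sum over $q$ of probabilities attached to the joint event that $q$ is visited by $R$ and that the past of the LERW exhibits the quasi-loop geometry near $q$. Carrying out this sum so that the total is $\le C\varepsilon^\alpha$ \emph{uniformly in $n$} is the main difficulty and the reason \cite{K} does not suffice: bounding the quasi-loop geometry by the plain event $\{q\in R[0,T_{0,n}]\}$ leaves a factor growing with $n$, so one must additionally use that the event ``the drawn LERW comes back within $2\varepsilon^Mn$ of its tip $q$ after an excursion of size $\sqrt\varepsilon n$'' is itself quantitatively rare, again via Lemma~\ref{l:4.6} applied at the intermediate dyadic scales of the excursion. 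I expect the clean way to organise this is an induction on the scale of the excursion, peeling off one dyadic layer at a time and paying a hittability factor at each step, so that the inductive hypothesis delivers the uniform-in-$n$ control that a one-shot net argument lacks; with that in hand, choosing $M$ large closes the estimate. It is in this final bookkeeping, rather than in the hittability input, that the ``for any $K$'' strength of Lemma~\ref{l:4.6} is indispensable.
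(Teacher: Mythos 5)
Your reformulation and your identification of the key inputs — Lemma~\ref{l:4.6} in its $[0,\sigma]$-truncated form, the need for uniformity in $n$, the observation that this uniformity is precisely what Kozma's $n$-dependent quasi-loop estimate cannot supply — are all on point, and your treatment of the near-origin and near-boundary regimes matches the paper's Cases 1 and 2. The gap is in the bulk case, in the way you organize the estimate.

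You do a last-exit decomposition at a lattice point $q$, and you yourself flag that the resulting sum over $q\in\Z^3\cap B(0,n)$ carries an $n$-dependent factor ($\sum_q\P[q\in R[0,T_{0,n}]]\asymp n^2$). Your proposed cure — ``an induction on the scale of the excursion, peeling off one dyadic layer at a time and paying a hittability factor at each step'' — addresses multi-scale hittability but not the overcounting: the approach-geometry constraint you would need to charge to each $q$ in order to kill the $n^2$ is not produced by Lemma~\ref{l:4.6}, and as written you have no mechanism that converts the pointwise decomposition into an $n$-free bound. This is a genuine missing step, not mere bookkeeping.

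The paper sidesteps this entirely by never summing over lattice points. It covers the bulk annulus by $O(\varepsilon^{-6})$ balls $B(v_j,\varepsilon^2 n)$ — a count depending on $\varepsilon$ but not on $n$ — and proves a per-ball estimate $C\varepsilon^7$ (Lemma~\ref{l:ql:case3}). Inside a fixed ball the objects are organized by excursions, not by points: stopping times $T_{2k-1},T_{2k}$ track the random walk's entries to $B(v,2\varepsilon^2 n)$ and exits from $\partial B(v,\tfrac12\varepsilon n)$; the number of excursions has a geometric tail $\P[T_{2i-1}<\infty]\le C\varepsilon^i$ (a plain escape estimate, with no appeal to Lemma~\ref{l:4.6}); Claim~\ref{cl:Ti} locates the excursion on which the quasi-loop first forms; Claim~\ref{cl:taui} locates the sub-arc $L_{i,k}=\loe(R[0,T_{2i-1}])[0,\tau_{2k}]$ that the continuation $R'$ must miss; and only then does Claim~\ref{cl:ql:beurling} invoke Lemma~\ref{l:4.6} (together with a net of spacing $\varepsilon^M n$ inside the single ball and a Harnack step) to bound the probability of that miss by $C\varepsilon^7$. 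It is the combination ``fixed number of balls $\times$ geometric tail on excursions per ball $\times$ hittability per excursion'' that delivers uniformity in $n$; your per-point last-exit decomposition has no analogue of the first two ingredients, and making it work would in effect require rediscovering the ball-and-excursion bookkeeping.
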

\begin{proof}
It suffices to consider sufficiently small $\varepsilon$. Let $M\geq 1$. 
The proof is subdivided into three cases: (1) there is a quasi-loop at a vertex of $B(0,\varepsilon n)$, 
(2) there is a quasi-loop at a vertex of $B(0,n-\varepsilon n)^c$, 
and (3) there is a quasi-loop at a vertex of $B(0,n-\varepsilon n)\setminus\overline{B(0,\varepsilon n)}$. 
We will verify the first and the second cases for $M=1$ and the last case for a sufficiently large $M$. 

Consider the first case. If $\ql\left(\varepsilon n,\sqrt{\varepsilon}n;~\loe(R[0,T_{0,n}])\right)\cap B(0,\varepsilon n)\neq\emptyset$, 
then the random walk $R$ must return to $B(0,2\varepsilon n)$ after leaving $B(0,\sqrt{\varepsilon} n - \varepsilon n)$. 
Thus,
\begin{multline*}
\P^0\left[\ql\left(\varepsilon n,\sqrt{\varepsilon}n;~\loe(R[0,T_{0,n}])\right)\cap B(0,\varepsilon n)\neq\emptyset\right]\\
\leq \P^0\left[R[T_{0,\sqrt{\varepsilon} n - \varepsilon n},~T_{0,n}]\cap B(0,2\varepsilon n)\neq \emptyset\right]
\leq C\sqrt{\varepsilon}. 
\end{multline*}
This implies \eqref{eq:ql} for quasi-loops at a vertex in $B(0,\varepsilon n)$. 

\medskip

Consider the second case. If $\ql\left(\varepsilon n,\sqrt{\varepsilon}n;~\loe(R[0,T_{0,n}])\right)\setminus B(0,n-\varepsilon n)\neq\emptyset$, 
then the random walk $R$ will hit $\partial B(0,n-2\varepsilon n)$ and then moves to distance $\sqrt{\varepsilon} n - 3\varepsilon n$ away from 
the hitting point before it exits from $B(0,n)$. Thus, there exist $\alpha>0$ and $C<\infty$ such that 
\begin{multline*}
\P^0\left[\ql\left(\varepsilon n,\sqrt{\varepsilon}n;~\loe(R[0,T_{0,n}])\right)\setminus B(0,n - \varepsilon n)\neq\emptyset\right]\\
\leq \P^0\left[R[T_{0,n-2\varepsilon n},~T_{0,n}]\cap \partial B(R(T_{0,n-2\varepsilon n}),\sqrt{\varepsilon}n-3\varepsilon n)\neq\emptyset\right]
\leq C\varepsilon^\alpha,
\end{multline*}
where the last inequality follows, for instance, from \cite[Lemma~2.5]{K}. 
This implies \eqref{eq:ql} for quasi-loops at a vertex outside of $B(0,n- \varepsilon n)$. 

\medskip

It remains to consider the third case. Let $A = B(0,n-\varepsilon n)\setminus \overline{B(0,\varepsilon n)}$.
We will prove that for some $M\geq 1$, 
\begin{equation}\label{eq:ql:case3}
\P^0\left[
\ql\left(\varepsilon^Mn,\varepsilon n;~\loe(R[0,T_{0,n}])\right)\cap A\neq\emptyset
\right]\leq C\varepsilon,
\end{equation}
which implies \eqref{eq:ql} for quasi-loops at a vertex in $A$. 
We cover $A$ by $s= 10\,\lfloor \varepsilon^{-6}\rfloor$ balls of radius $\varepsilon^2 n$ with centers at $v_1,\ldots, v_s\in A$. 
Then, the probability in \eqref{eq:ql:case3} is bounded from above by 
\[
10\,\varepsilon^{-6}\,\max_i 
\P^0\left[
\ql\left(\varepsilon^Mn,\varepsilon n;~\loe(R[0,T_{0,n}])\right)\cap B(v_i,\varepsilon^2 n)\neq\emptyset
\right],
\]
and the inequality \eqref{eq:ql:case3} is immediate from the following lemma. 
\begin{lemma}\label{l:ql:case3}
There exist $M,C<\infty$ such that for all $v\in A$, $\varepsilon>0$ and $n\geq1$, 
\[
\P^0\left[
\ql\left(\varepsilon^Mn,\varepsilon n;~\loe(R[0,T_{0,n}])\right)\cap B(v,\varepsilon^2 n)\neq\emptyset
\right] \leq C\varepsilon^7.
\]
\end{lemma}
The proof of Theorem~\ref{thm:ql} is complete subject to Lemma~\ref{l:ql:case3}, which we prove below. 
\end{proof}

\medskip

\begin{proof}[Proof of Lemma~\ref{l:ql:case3}]
Fix $v\in A$ and assume that $M>2$. Consider the stopping times $T_0 = 0$, 
\[
\begin{array}{rll}
T_{2k-1} &= &\inf\{t> T_{2k-2}~:~R(t)\in B(v,2\varepsilon^2 n)\},\\
T_{2k} &= &\inf\{t> T_{2k-1}~:~ R(t)\in\partial B(v,\frac12\varepsilon n)\},
\end{array}
\]
where $\inf\emptyset = \infty$. 
\begin{claim}\label{cl:Ti}
If $\ql\left(\varepsilon^Mn,\varepsilon n;~\loe(R[0,T_{0,n}])\right)\cap B(v,\varepsilon^2 n)\neq\emptyset$, then 
there exists $i$ such that 
\begin{equation}\label{eq:ql:i}
\begin{array}{rll}\itemsep3pt
\ql\left(\varepsilon^Mn,\varepsilon n;~\loe(R[0,T_{2i-1}])\right)\cap B(v,\varepsilon^2 n) &= &\emptyset, \\ 
\ql\left(\varepsilon^Mn,\varepsilon n;~\loe(R[0,T_{2i}])\right)\cap B(v,\varepsilon^2 n) &\neq &\emptyset.
\end{array}
\end{equation}
\end{claim}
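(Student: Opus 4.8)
\textbf{Proof proposal.} The plan is to argue that a quasi-loop at a vertex of $B(v,\varepsilon^2 n)$ cannot be ``created'' by the loop erasure unless, at some excursion of $R$ between $\partial B(v,\tfrac12\varepsilon n)$ and $B(v,2\varepsilon^2 n)$, the relevant long sub-path of $\loe(R[0,T_{0,n}])$ gets placed in the annulus. Concretely, suppose $v'\in B(v,\varepsilon^2 n)$ lies in $\ql(\varepsilon^M n,\varepsilon n;\loe(R[0,T_{0,n}]))$, witnessed by indices $a\le b$ along $\loe(R[0,T_{0,n}])$ with $\loe(R[0,T_{0,n}])(a),\loe(R[0,T_{0,n}])(b)\in B(v',\varepsilon^M n)\subset B(v,2\varepsilon^M n)$ and $\loe(R[0,T_{0,n}])[a,b]\nsubseteq B(v',\varepsilon n)$. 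First I would observe that, since $M>2$, both endpoints $\loe(R[0,T_{0,n}])(a)$ and $\loe(R[0,T_{0,n}])(b)$ lie well inside $B(v,\varepsilon^2 n)$ while the path between them leaves $B(v,\tfrac12\varepsilon n)$ (because $\varepsilon^M n < \varepsilon^2 n$ and $\varepsilon^2 n + \varepsilon^M n < \tfrac12 \varepsilon n$ for small $\varepsilon$). Hence the point $v$ itself (not just $v'$) already lies in $\ql(\varepsilon^2 n,\tfrac12\varepsilon n;\loe(R[0,T_{0,n}]))$, and the full quasi-loop segment is a sub-path of $\loe(R[0,T_{0,n}])$ that starts in $B(v,2\varepsilon^2 n)$, exits $B(v,\tfrac12\varepsilon n)$, and returns to $B(v,2\varepsilon^2 n)$.

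The key structural point is monotonicity of the quasi-loop set under truncation combined with a ``first time the quasi-loop appears'' argument along the sequence $(T_k)$. Define, for $t\ge 0$, the truncated loop erasure $\loe(R[0,t])$. The set $\ql(\varepsilon^M n,\varepsilon n;\loe(R[0,t]))$ is not monotone in $t$ in general (the loop erasure can both grow and shrink), so the subtlety is that one cannot simply take the first $t$ at which the quasi-loop appears. Instead I would use the $T_k$'s: any sub-path of $R$ realizing the quasi-loop segment of $\loe(R[0,T_{0,n}])$ must, after entering $B(v,2\varepsilon^2 n)$ and before its return there, leave $B(v,\tfrac12\varepsilon n)$; therefore the indices of $R$ responsible for the two endpoints of the quasi-loop straddle at least one full pair of stopping times, i.e.\ there is some $k$ with $T_{2k-1}\le(\text{first endpoint time})$ and $(\text{second endpoint time})\le T_{2k}$ is too naive --- rather, one shows there is some $i$ such that at time $T_{2i-1}$ no quasi-loop at $B(v,\varepsilon^2 n)$ is yet present in $\loe(R[0,T_{2i-1}])$, but by time $T_{2i}$ one is. This follows by taking $i$ to be the smallest index for which $\ql(\varepsilon^M n,\varepsilon n;\loe(R[0,T_{2i}]))\cap B(v,\varepsilon^2 n)\neq\emptyset$ (such $i$ exists and is $\le$ the number of crossings, since the quasi-loop is present at the terminal time $T_{0,n}$, which lies between some $T_{2i_0}$ and $T_{2i_0+1}$, and adding the piece $R[T_{2i_0},T_{0,n}]$ cannot destroy a quasi-loop located in $B(v,\varepsilon^2 n)$ because that piece stays outside $B(v,2\varepsilon^2 n)$ after possibly an initial... ) --- here care is needed: one must check that $\loe(R[0,T_{2i-1}])$ genuinely fails to contain the quasi-loop, which is immediate from minimality of $i$ provided one also checks $T_{2i-1}<\infty$.

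The main obstacle I anticipate is precisely the non-monotonicity of loop erasure: when we pass from $R[0,t]$ to $R[0,t']$ with $t'>t$, the path $\loe(R[0,t'])$ agrees with $\loe(R[0,t])$ only up to the last visit of $R[0,t']$ to $\loe(R[0,t])$, and a later excursion of $R$ can chop off a chunk of the loop erasure that contained part of a quasi-loop. The fix is geometric and uses that the excursions between consecutive $T_k$'s are confined: between $T_{2i}$ and $T_{2i+1}$ the walk stays outside $B(v,2\varepsilon^2 n)$, and between $T_{2i-1}$ and $T_{2i}$ it stays inside $B(v,\tfrac12\varepsilon n)$, so any erasure caused by a later excursion either removes only vertices outside $B(v,2\varepsilon^2 n)$ (harmless, since the quasi-loop endpoints are in $B(v,\varepsilon^2 n)$ and $M>2$) or else corresponds to the walk re-entering $B(v,2\varepsilon^2 n)$, which by definition advances $k$. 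Making this book-keeping precise --- stating exactly which vertices of $\loe(R[0,T_{0,n}])$ near $v$ coincide with vertices of $\loe(R[0,T_{2i}])$ near $v$ --- is the technical heart, but it is elementary once the confinement of excursions is invoked. Once the claim is set up this way, choosing $i$ minimal with the second displayed property gives the first displayed property for free, and the claim follows.

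\medskip

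\noindent(\emph{Remark on what comes next, for context:} with Claim~\ref{cl:Ti} in hand, one unions over $i$, uses the strong Markov property at $T_{2i-1}$ together with the fact that the number of crossings $i$ between $\partial B(v,\tfrac12\varepsilon n)$ and $B(v,2\varepsilon^2 n)$ before $T_{0,n}$ is stochastically dominated by a geometric-type variable with the right tail, and then estimates $\P^0[\,\ql(\varepsilon^M n,\varepsilon n;\loe(R[0,T_{2i}]))\cap B(v,\varepsilon^2 n)\neq\emptyset\mid \mathcal F_{T_{2i-1}}\,]$ via Lemma~\ref{l:4.6} and hittability of the loop erasure; the power $M$ is chosen large enough to beat the $\varepsilon^{-6}$ from the covering and the crossing count.)
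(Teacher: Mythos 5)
Your proposal is essentially the approach in the paper: track quasi-loops along the crossing times $T_k$ between $\partial B(v,\tfrac12\varepsilon n)$ and $B(v,2\varepsilon^2 n)$, use the fact that a piece of the walk confined outside $B(v,\varepsilon^2 n + \varepsilon^M n)$ cannot alter the quasi-loop set at centers in $B(v,\varepsilon^2 n)$, and pick a critical index $i$ (the paper descends from the terminal time until the quasi-loop disappears; you take the minimal $i$ at which it appears --- for an existence claim these are interchangeable).

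Two imprecisions to iron out. First, you assert that ``adding the piece $R[T_{2i_0},T_{0,n}]$ cannot destroy a quasi-loop''; the statement that is true and that you need is the reverse: adding a confined piece cannot \emph{create} a quasi-loop in $B(v,\varepsilon^2 n)$, equivalently removing it cannot destroy one. Indeed, when one concatenates a confined excursion $\gamma_2$ onto $\gamma_1$, the suffix of $\loe(\gamma_1\cup\gamma_2)$ not agreeing with $\loe(\gamma_1)$ consists of vertices of $\gamma_2$ and hence stays outside $B(v,\varepsilon^2 n + \varepsilon^M n)$; any quasi-loop whose endpoints lie in $B(v',\varepsilon^M n)\subset B(v,\varepsilon^2 n + \varepsilon^M n)$ therefore lies entirely in the common prefix and so was already present in $\loe(\gamma_1)$. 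The converse (``adding cannot destroy'') is actually false in general, since the added excursion can hit $\loe(\gamma_1)$ outside the ball and thereby erase the later endpoint of a pre-existing quasi-loop. Second, minimality of $i$ gives the absence of a quasi-loop at $T_{2i-2}$, not yet at $T_{2i-1}$ as the claim requires; you need one further application of the same confinement observation, to the excursion $R[T_{2i-2},T_{2i-1}]$, which avoids $B(v,2\varepsilon^2 n)\supset B(v,\varepsilon^2 n + \varepsilon^M n)$, to upgrade this to the first line of \eqref{eq:ql:i}. Neither point changes the structure; they are the same book-keeping the paper carries out.
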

\begin{proof}[Proof of Claim~\ref{cl:Ti}]
Assume that $\ql\left(\varepsilon^Mn,\varepsilon n;~\loe(R[0,T_{0,n}])\right)$ has a point in $B(v,\varepsilon^2 n)$. 
Let $I$ be such that $T_{2I-1} < T_{0,n} < T_{2I}$. 

Note that $\ql\left(\varepsilon^Mn,\varepsilon n;~\loe(R[0,T_{2I}])\right)$ also has a point in $B(v,\varepsilon^2 n)$, 
since $R[T_{2I},T_{0,n}]$ does not intersect $B(v,\varepsilon^2n + \varepsilon^Mn)$. 
If $\ql\left(\varepsilon^Mn,\varepsilon n;~\loe(R[0,T_{2I-1}])\right)\cap B(v,\varepsilon^2 n)=\emptyset$, 
then we are done, thus assume the contrary. 

Let $i\leq I$ and assume that $\ql\left(\varepsilon^Mn,\varepsilon n;~\loe(R[0,T_{2i-1}])\right)$ has a point in $B(v,\varepsilon^2 n)$. 
Since $R[T_{2i-2},T_{2i-1}]$ does not intersect $B(v,\varepsilon^2 n + \varepsilon^M n)$, the set 
$\ql\left(\varepsilon^Mn,\varepsilon n;~\loe(R[0,T_{2i-2}])\right)$ must also have a point in $B(v,\varepsilon^2 n)$. 
In this case, if $\ql\left(\varepsilon^Mn,\varepsilon n;~\loe(R[0,T_{2i-3}])\right)$ does not have a point in $B(v,\varepsilon^2 n)$, 
then we are done, otherwise, we repeat. 

Note that the above procedure eventually succeeds, since $\ql\left(\varepsilon^Mn,\varepsilon n;~\loe(R[0,T_1])\right)$ 
does not have a point in $B(v,\varepsilon^2 n)$. (In fact, $\ql\left(\varepsilon^Mn,\varepsilon n;~\loe(R[0,T_3])\right)$ too.)
\end{proof}

\medskip

We proceed to estimate the probability of event \eqref{eq:ql:i} for any fixed $i$. 
Let $i$ be fixed so that $T_{2i-1}<\infty$ and define $L' = \loe(R[0,T_{2i-1}])$ and $R' = R[T_{2i-1},T_{2i}]$. 
Consider the stopping times $\tau_0 = 0$, 
\[
\begin{array}{rll}
\tau_{2k-1} &= &\inf\{t> \tau_{2k-2}~:~L'(t)\in B(v,2\varepsilon^2 n)\},\\
\tau_{2k} &= &\inf\{t> \tau_{2k-1}~:~ L'(t)\in\partial B(v,\frac12\varepsilon n)\}.
\end{array}
\]
\begin{claim}\label{cl:taui}
If the event \eqref{eq:ql:i} occurs, then there exists $k$ and $x\in B(v,\varepsilon^2 n)$ such that 
\begin{equation}\label{eq:taui}
L'[0,\tau_{2k}]\cap B(x,\varepsilon^M n)\neq \emptyset, \qquad 
R'\cap B(x,\varepsilon^M n)\neq \emptyset,\qquad
R'\cap L'[0,\tau_{2k}]=\emptyset.
\end{equation}
\end{claim}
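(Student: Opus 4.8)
The plan is to prove Claim~\ref{cl:taui} as a deterministic statement about the loop erasure of the concatenation $R[0,T_{2i}]=R[0,T_{2i-1}]\oplus R'$, where $R'=R[T_{2i-1},T_{2i}]$. Write $\widehat\gamma=\loe(R[0,T_{2i}])$. First I would use the standard identity $\loe(\gamma_1\oplus\gamma_2)=\loe(\loe(\gamma_1)\oplus\gamma_2)$ to rewrite $\widehat\gamma=\loe(L'\oplus R')$ and then read off its structure: with $t_*=\min\{t\ge1:L'(t)\in R'\}$ (the $t_*$ of the $\loe_1/\loe_2$ decomposition from Section~\ref{sec:beurlingtypeestimate}, applied to $\gamma_1=L'$, $\gamma_2=R'$), one has $\widehat\gamma[0,t_*]=L'[0,t_*]$ — because $L'$ is simple and $L'[0,t_*-1]$ avoids $R'$ — while $\widehat\gamma[t_*,\mathrm{len}\,\widehat\gamma]$ is the loop erasure of a final portion of $R'$, hence a subset of $R'$. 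Since $R'$ is a single excursion of $R$ from $B(v,2\varepsilon^2 n)$ to $\partial B(v,\frac12\varepsilon n)$, its trace, and therefore the tail $\widehat\gamma[t_*,\mathrm{len}\,\widehat\gamma]$, is contained in $\overline{B(v,\frac12\varepsilon n)}$; moreover $0\notin R'$ because $v\in A$ forces $|v|>\varepsilon n$, so $\overline{B(v,\frac12\varepsilon n)}$ misses the origin. Finally, if $\varepsilon^M n<1$ then every ball $B(\cdot,\varepsilon^M n)$ is a single lattice point and the simple path $\widehat\gamma$ has no such quasi-loop at all, so \eqref{eq:ql:i} is empty and there is nothing to prove; hence I may assume $\varepsilon^M n\ge1$ and $\varepsilon$ small.

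Next I would extract the quasi-loop. By the second part of \eqref{eq:ql:i} there are $y\in\Z^3\cap B(v,\varepsilon^2 n)$ and indices $a\le b$ with $\widehat\gamma(a),\widehat\gamma(b)\in B(y,\varepsilon^M n)$ and $\widehat\gamma[a,b]\nsubseteq B(y,\varepsilon n)$. Thus $\widehat\gamma[a,b]$ contains a vertex $\widehat\gamma(c)$ at distance more than $\varepsilon n-\varepsilon^2 n$ from $v$; for $\varepsilon$ small and $\varepsilon^M n\ge1$ this exceeds $\frac12\varepsilon n+1$, the largest distance from $v$ attainable on the tail $\widehat\gamma[t_*,\mathrm{len}\,\widehat\gamma]\subset\overline{B(v,\frac12\varepsilon n)}$. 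Hence $c\le t_*-1$, so $a\le c\le t_*-1$ and $\widehat\gamma(a)=L'(a)$, $\widehat\gamma(c)=L'(c)$. If $b$ were also $\le t_*-1$, then $\widehat\gamma[a,b]=L'[a,b]$ would be an $(\varepsilon^M n,\varepsilon n)$-quasi-loop of $L'$ at $y\in B(v,\varepsilon^2 n)$, contradicting the first part of \eqref{eq:ql:i}; therefore $b\ge t_*$ and $\widehat\gamma(b)\in R'$. Taking $x=y$, this already gives $R'\cap B(x,\varepsilon^M n)\ni\widehat\gamma(b)$.

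It remains to choose $k$. Since $L'(a)\in B(y,\varepsilon^M n)\subset B(v,2\varepsilon^2 n)$ (using $M\ge2$), I would let $k$ be the largest index with $\tau_{2k-1}\le a$, so that the $k$-th visit of $L'$ to $B(v,2\varepsilon^2 n)$ is in progress at time $a$. Then $\tau_{2k}>a$: otherwise $L'$ would have to re-enter $B(v,2\varepsilon^2 n)$ after $\tau_{2k}$ to be there at time $a$, contradicting maximality of $k$ (one also uses $B(v,2\varepsilon^2 n)\cap\partial B(v,\frac12\varepsilon n)=\emptyset$); consequently $a\le\tau_{2k}$, so $L'(a)\in L'[0,\tau_{2k}]$ and $L'[0,\tau_{2k}]\cap B(x,\varepsilon^M n)\neq\emptyset$. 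Also $\tau_{2k}<t_*$: during $[\tau_{2k-1},\tau_{2k})$ the nearest-neighbour path $L'$ stays in $B(v,\frac12\varepsilon n)$ by connectivity, whereas $L'(c)$ with $c\in[a,t_*-1]\subset[\tau_{2k-1},\infty)$ lies outside $B(v,\frac12\varepsilon n)$, so $c\ge\tau_{2k}$, i.e.\ $\tau_{2k}\le c\le t_*-1<t_*$. Hence $L'[0,\tau_{2k}]\subset L'[0,t_*-1]$, which is disjoint from $R'$ by the definition of $t_*$ together with $0\notin R'$, so $R'\cap L'[0,\tau_{2k}]=\emptyset$. All three assertions of the claim hold with this $k$ and $x=y$.

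The main obstacle is the first step: verifying cleanly that $\widehat\gamma$ coincides with $L'$ up to the first hit of $R'$ and is confined to $R'$ thereafter. Once this structural fact is in place, everything else is elementary — it rests only on the confinement of $R'$ to $\overline{B(v,\frac12\varepsilon n)}$ and routine manipulations of the excursion times $\tau_j$ — with the one caveat of keeping the radii $\varepsilon^M n\ll\varepsilon^2 n\ll\frac12\varepsilon n<\varepsilon n-\varepsilon^2 n$ comfortably separated, which is why $\varepsilon$ is taken small and $\varepsilon^M n\ge1$.
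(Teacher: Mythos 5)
Your proof is correct and follows the same core strategy as the paper: decompose $\loe(R[0,T_{2i}])=\loe(L'\cup R')$ into the $L'$-part up to the first hit $t_*$ of $R'$ and a tail confined to $R'\subset\overline{B(v,\frac12\varepsilon n)}$, then use the radius separation ($\varepsilon^M n\ll\varepsilon^2 n\ll\tfrac12\varepsilon n<\varepsilon n-\varepsilon^2 n$) to force the quasi-loop's far vertex into $L'[0,t_*-1]$ and its far endpoint into $R'$. The only noteworthy difference is the extraction of $k$: you take the largest $k$ with $\tau_{2k-1}\le a$ and verify $\tau_{2k}<t_*$ directly by comparing the far vertex $c$ with the excursion window $[\tau_{2k-1},\tau_{2k}]$, whereas the paper takes $k$ minimal such that $L'[0,\tau_{2k}]$ and $R'$ both meet some $B(x,\varepsilon^M n)$ and obtains $R'\cap L'[0,\tau_{2k}]=\emptyset$ by a minimality-contradiction argument; this is a cosmetic variation, and your forward construction has the advantage of making explicit the structural decomposition of $\loe(L'\cup R')$ that the paper only invokes in one line.
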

\begin{proof}[Proof of Claim~\ref{cl:taui}]
Assume that the event \eqref{eq:ql:i} occurs. 
Since $\loe(R[0,T_{2i}]) = \loe(L'\cup R')$ and $R'\subset \overline{B(v,\frac12\varepsilon n)}$, there exist $x\in B(v,\varepsilon^2 n)$ such that 
$L'[0,\mathrm{len}\,L']\cap B(x,\varepsilon^Mn)\neq\emptyset$ and $R'\cap B(x,\varepsilon^Mn)\neq\emptyset$. 
Otherwise, $R'$ would not complete any $(\varepsilon^Mn,\varepsilon n)$-quasi-loop in $B(v,\varepsilon^2 n)$. 

Let $k$ be the smallest index such that for some $x\in B(v,\varepsilon^2n)$, 
$L'[0,\tau_{2k}]\cap B(x,\varepsilon^M n)\neq \emptyset$ and $R'\cap B(x,\varepsilon^M n)\neq \emptyset$. 
We claim that $k$ satisfies \eqref{eq:taui}, i.e., $R'\cap L'[0,\tau_{2k}]=\emptyset$. 

Assume that $R'\cap L'[0,\tau_{2k}]\neq\emptyset$. 
Let $s$ be the smallest number that $L'[0,s]\cap R'\neq\emptyset$. By the assumption, $\tau_{2k}\geq s$.
Thus, $L'[\tau_{2k-1},s]\subset \overline{B(v,\frac12\varepsilon n)}$. 
Since also $R'\subset \overline{B(v,\frac12\varepsilon n)}$, 
there must exist $x\in B(v,\varepsilon^2 n)$ such that 
$L'[0,\tau_{2k-1}]\cap B(x,\varepsilon^M n)\neq\emptyset$ and $R'\cap B(x,\varepsilon^M n)\neq\emptyset$. 
Since $L'[\tau_{2k-2},\tau_{2k-1}]\cap B(v,\varepsilon^2n+\varepsilon^Mn)=\emptyset$, the above conclusion actually gives that 
for some $x\in B(v,\varepsilon^2 n)$, 
$L'[0,\tau_{2k-2}]\cap B(x,\varepsilon^M n)\neq\emptyset$ and $R'\cap B(x,\varepsilon^M n)\neq\emptyset$. 
This contradicts the minimality of $k$. 
\end{proof}

\medskip

Next we use Lemma~\ref{l:4.6} to prove that each $\loe(R[0,T_{2i-1}])[0,\tau_{2k}]$ is hittable by a random walk starting nearby. 

\begin{claim}\label{cl:ql:beurling}
There exist $M$ and $C$ such that for each $v\notin B(0,\varepsilon n)$, integers $i$ and $k$, and $\varepsilon>0$, 
if $L_{i,k} = \loe(R[0,T_{2i-1}])[0,\tau_{2k}]$, then 
\begin{equation}\label{eq:ql:beurling}
\P^0 \left[
\begin{array}{c}
\text{ $T_{2i-1}<\infty$, $\tau_{2k}<\infty$, and there exists  $x \in B(v,2\varepsilon^2 n)$ such that }\\[5pt]
\text{$\mathrm{dist}\left(x,L_{i,k}\right)\leq 2\varepsilon^M n$ and }\P^{2,x} \left[ R^2[0, T^2_{x,\frac14\varepsilon n}] \cap L_{i,k}= \emptyset \right] > \varepsilon^7
\end{array}
\right]
\leq C\,\varepsilon^7.
\end{equation}
\end{claim}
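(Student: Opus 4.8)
The plan is to reduce \eqref{eq:ql:beurling} to an application of Lemma~\ref{l:4.6}, exactly in the spirit of how Theorem~\ref{thm:beurling} was proved for $x\notin B(0,\varepsilon n)$, but now taking care that the loop-erased path in question is the \emph{truncated} one $L_{i,k}=\loe(R[0,T_{2i-1}])[0,\tau_{2k}]$ rather than the full loop erasure. This is precisely why Lemma~\ref{l:4.6} was stated in the stronger form with the truncation $\loe(R^1[0,T])[0,\sigma]$: here we want to take $T=T_{2i-1}$ (a stopping time, which is fine for Lemma~\ref{l:4.6} since the statement holds for every deterministic $T$ and hence, after conditioning, for stopping times too, or one simply notes the event $\{T_{2i-1}<\infty, \tau_{2k}<\infty\}$ is measurable and sums over the countably many values of $T_{2i-1}$), and the truncation time $\sigma$ should be arranged so that $\loe(R^1[0,T_{2i-1}])[0,\sigma]$ is comparable to $L_{i,k}$. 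Observe that $L_{i,k}$ is an initial segment of $\loe(R[0,T_{2i-1}])$ ending at the $2k$-th alternating visit to $\partial B(v,\tfrac12\varepsilon n)$; in particular $L_{i,k}$ reaches $\partial B(v,\tfrac12\varepsilon n)$ (since $\tau_{2k}<\infty$ forces $L'(\tau_{2k})\in\partial B(v,\tfrac12\varepsilon n)$) and it also enters $B(v,2\varepsilon^2 n)$ (since $\tau_{2k-1}<\tau_{2k}$). So $L_{i,k}$ crosses the annulus $B(v,\tfrac12\varepsilon n)\setminus\overline{B(v,2\varepsilon^2 n)}$, which is exactly the kind of crossing Lemma~\ref{l:4.6} is designed to handle.

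The steps are: (1) Cover the ball $B(v,2\varepsilon^2 n)$ of possible starting points $x$ by the single center $v$ up to a Harnack correction: for $x\in B(v,2\varepsilon^2 n)$ one has $B(x,\tfrac14\varepsilon n)\supset B(v,\tfrac18\varepsilon n)$ and $B(x,2\varepsilon^M n)\subset B(v,3\varepsilon^M n)$, so by the Harnack inequality (\cite[Theorem~1.7.2]{L}) applied to the harmonic function $x\mapsto\P^{2,x}[R^2[0,T^2_{v,\frac18\varepsilon n}]\cap L_{i,k}\cap(B(v,\tfrac18\varepsilon n)\setminus\overline{B(v,3\varepsilon^M n)})=\emptyset]$ on $B(v,3\varepsilon^M n)$, the existence of a \emph{bad} $x$ implies $\P^{2,v}[\cdots]>c_H\varepsilon^7$ for a universal $c_H>0$, just as in the proof of Theorem~\ref{thm:beurling}. (2) On the event in \eqref{eq:ql:beurling}, $L_{i,k}=\loe(R[0,T_{2i-1}])[0,\sigma']$ where $\sigma'=\inf\{t\ge 0: \loe(R[0,T_{2i-1}])[0,t]\cap \overline{B(v,2\varepsilon^2 n)}\neq\emptyset\}$ — here is where one has to be slightly careful: the truncation in Lemma~\ref{l:4.6} is at the first hitting of $B(v,s)$, so set $s=3\varepsilon^M n$ and note $L_{i,k}$ contains the segment of $\loe(R[0,T_{2i-1}])$ up to its first entry into $B(v,3\varepsilon^M n)$ (because $L_{i,k}$ passes within $2\varepsilon^M n$ of $x\in B(v,2\varepsilon^2 n)$, hence enters $B(v,3\varepsilon^M n)$; and $L_{i,k}$ is an initial segment). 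Thus $\loe(R[0,T_{2i-1}])[0,\sigma]\subseteq L_{i,k}$ with $\sigma$ the Lemma~\ref{l:4.6} truncation time, so a random walk avoiding $L_{i,k}$ avoids $\loe(R[0,T_{2i-1}])[0,\sigma]$. (3) Also $\loe(R[0,T_{2i-1}])\nsubseteq B(v,s)^c$ on this event (since $L_{i,k}$ enters $B(v,3\varepsilon^M n)$). (4) Now apply Lemma~\ref{l:4.6} with this $v$, with $r=\tfrac18\varepsilon n$, $s=3\varepsilon^M n$, and $K$ chosen so that $(\tfrac{s}{r})^K=(24\varepsilon^{M-1})^K\le C\varepsilon^7$; concretely, choosing $M$ large enough that $M-1\ge \tfrac{8}{K}$ with $K=K_0$ the value from Lemma~\ref{l:4.6}, or rather choosing $K$ and then $M$, one gets the desired bound $\le C\varepsilon^7$ after absorbing the exponent $\eta$ from the lemma by taking $\varepsilon^\eta<\varepsilon^7$ — which requires $\eta\ge 7$; since Lemma~\ref{l:4.6} only guarantees \emph{some} $\eta>0$, one instead keeps $\varepsilon^{\eta}$ on the comparison side and notes $c_H\varepsilon^7>\varepsilon^{\eta}$ fails for small $\varepsilon$ unless $\eta\le 7$, so the correct move is: first shrink so that the threshold $c_H\varepsilon^7$ exceeds $\varepsilon^{\eta}$ only when $\eta>7$, which we cannot assume — therefore we should instead run Lemma~\ref{l:4.6} with its own $\eta$ and simply conclude that the probability in question, with $\varepsilon^7$ replaced by $\varepsilon^{\eta}$, is $\le C(\tfrac sr)^K$, and then decrease the exponent $7$ in the statement if needed, or equivalently choose $M$ so large that the geometry forces the comparison to go through). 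In the write-up I would phrase it cleanly by matching Lemma~\ref{l:4.6}'s $\eta$ to the exponent $7$ via an initial reduction (replace $7$ by $\min(7,\eta)$, harmless since we want an \emph{upper} bound $C\varepsilon^7$ on a probability and $\varepsilon^7\le\varepsilon^{\min(7,\eta)}$... actually one wants the threshold large, so we take the threshold $\varepsilon^{\eta}$ with $\eta$ possibly smaller than $7$, giving a \emph{stronger} conclusion, then observe $\varepsilon^7\le\varepsilon^{\eta}$ for small $\varepsilon$ so the event with threshold $\varepsilon^7$ is contained in the one with threshold $\varepsilon^{\eta}$ — no wait, that's backwards).

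Let me restate the delicate point, since it is the only real obstacle. The event in \eqref{eq:ql:beurling} has a \emph{lower} threshold $\varepsilon^7$; making the threshold smaller makes the event \emph{larger}, so we want to bound the probability of an event defined with threshold $\varepsilon^7$, and after the Harnack step we must bound the probability of a $\P^{2,v}[\cdots]>c_H\varepsilon^7$ event. Lemma~\ref{l:4.6} bounds the probability of $\P^{2,v}[\cdots]>(\tfrac sr)^{\eta}$. For the implication to go through we need $(\tfrac sr)^\eta\le c_H\varepsilon^7$, i.e. $(24\varepsilon^{M-1})^\eta\le c_H\varepsilon^7$, which for small $\varepsilon$ holds as soon as $(M-1)\eta>7$; since $\eta>0$ is fixed by Lemma~\ref{l:4.6} (for our chosen $K$), we just pick $M$ with $M>1+\tfrac{7}{\eta}$. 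Then Lemma~\ref{l:4.6} gives probability $\le C(\tfrac sr)^K=C(24\varepsilon^{M-1})^K\le C\varepsilon^7$ provided additionally $(M-1)K\ge 7$, which is automatic once $M$ is large. So: choose $K=7$ in Lemma~\ref{l:4.6}, get the corresponding $\eta>0$ and $C<\infty$, then set $M=\lceil 1+8/\eta\rceil$ (say), and the two inequalities $(M-1)\eta\ge 8>7$ and $(M-1)\cdot 7\ge 7$ both hold. This is the step to be careful with; everything else (Harnack, the geometric inclusions of balls, measurability of the stopping-time version of Lemma~\ref{l:4.6}, adjusting constants) is routine. I would also remark that, strictly, Lemma~\ref{l:4.6} is stated for deterministic $T$ but $T_{2i-1}$ is a stopping time — this is handled by conditioning on $\{T_{2i-1}=t, R[0,t]\}$ and applying the lemma conditionally (the conclusion of Lemma~\ref{l:4.6} depends on $R^1[0,T]$ only, and the loop-erasure truncation is a deterministic functional of it), then integrating.
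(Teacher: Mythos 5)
Your overall strategy—reduce to Lemma~\ref{l:4.6} using the truncation $\loe(R^1[0,T])[0,\sigma]$, which is why the lemma was stated in that stronger form—is the same as the paper's, and your parameter bookkeeping at the end (choose $K$, get $\eta(K)$, then take $M>1+7/\eta$ so that $(24\varepsilon^{M-1})^\eta<c_H\varepsilon^7$) is essentially what the paper does.

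However, there is a concrete gap in your Harnack step. You claim that for $x\in B(v,2\varepsilon^2 n)$ one has $B(x,2\varepsilon^M n)\subset B(v,3\varepsilon^M n)$, and then apply Harnack on $B(v,3\varepsilon^M n)$ to compare $\P^{2,x}[\cdots]$ with $\P^{2,v}[\cdots]$. This is false for $M>2$: the point $x$ can lie anywhere in $B(v,2\varepsilon^2 n)$, so $|x-v|$ may be as large as $2\varepsilon^2 n$, which dwarfs the Harnack ball radius $\sim\varepsilon^M n$. Harnack on $B(v,3\varepsilon^M n)$ only controls points within a ball of comparable radius around $v$; it gives you nothing for an $x$ at distance $\varepsilon^2 n$. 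For the same reason, your step (2) (that $L_{i,k}$ enters $B(v,3\varepsilon^M n)$ because it passes within $2\varepsilon^M n$ of $x$) also fails: $L_{i,k}$ passes near $x$, but $x$ may be far from $v$ on the scale $\varepsilon^M n$.

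The paper's fix, which you need, is to first cover $B(v,2\varepsilon^2 n)$ by $s=10\lfloor\varepsilon^{-3M}\rfloor$ balls of radius $\varepsilon^M n$ with centers $v_1,\dots,v_s$, apply the Harnack comparison and Lemma~\ref{l:4.6} \emph{per center $v_j$} (where your geometric inclusions $B(x,2\varepsilon^M n)\subset B(v_j,3\varepsilon^M n)$ and $B(x,\tfrac14\varepsilon n)\supset B(v_j,\tfrac18\varepsilon n)$ are now correct for $x\in B(v_j,\varepsilon^M n)$), and then union-bound over $j$. The union bound costs an extra factor $\varepsilon^{-3M}$, which is why the paper takes $K$ large enough that $(s/r)^K=(24\varepsilon^{M-1})^K<\varepsilon^{7+3M}$, not merely $<\varepsilon^7$; your choice of $K=7$ is not obviously sufficient, and in any case you never account for the $\varepsilon^{-3M}$ factor since you omitted the covering. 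Everything else in your plan (the measurability remark, the role of the truncation $\sigma$, the eventual choice of $M$ large) is fine.
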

\begin{proof}[Proof of Claim~\ref{cl:ql:beurling}]
Assume that $\varepsilon$ is small (possibly depending on $M$). 
We cover $B(v,2\varepsilon^2 n)$ by $s = 10\,\lfloor \varepsilon^{-3M}\rfloor$ balls of radius $\varepsilon^Mn$ 
with centers at $v_1,\ldots, v_s\in B(v,2\varepsilon^2 n)$. 
Then the probabilty in \eqref{eq:ql:beurling} is bounded from above by 
\[
10\,\varepsilon^{-3M}\,\max_j
\P^0 \left[
\begin{array}{c}
\text{ $T_{2i-1}<\infty$, $\tau_{2k}<\infty$, and there exists  $x \in B(v_j,\varepsilon^M n)$ such that }\\[5pt]
\text{$\mathrm{dist}\left(x,L_{i,k}\right)\leq 2\varepsilon^M n$ and }\P^{2,x} \left[ R^2[0, T^2_{x,\frac14\varepsilon n}] \cap L_{i,k}= \emptyset \right] > \varepsilon^7
\end{array}
\right].
\]
For each $x\in  B\left(v_j,\varepsilon^M n\right)$, 
$B(x,2\varepsilon^M n)\subset B(v_j,3\varepsilon^M n)$ and 
$B(x,\frac14\varepsilon n)\supset B(v_j,\frac18\varepsilon n)$. 
Thus, the $j$th probability in the above maximum is at most  
\[
\P^0 \left[
\begin{array}{c}
\text{ $T_{2i-1}<\infty$, $\tau_{2k}<\infty$,}\\[5pt]
\text{$L_{i,k}\cap B(v_j,3\varepsilon^Mn)\neq\emptyset$, there exists  $x \in B(v_j,\varepsilon^M n)$ such that}\\[5pt]
\P^{2,x} \left[ R^2[0, T^2_{v_j,\frac18\varepsilon n}] \cap L_{i,k}\cap\left(B(v_j,\frac18\varepsilon n)\setminus\overline{B(v_j,3\varepsilon^Mn)}\right)= \emptyset \right] > \varepsilon^7
\end{array}
\right].
\]
By the Harnack inequality applied to the harmonic function
\[
\P^{2,x} \left[ R^2[0, T^2_{v_j,\frac18\varepsilon n}] \cap L_{i,k}\cap\left(B(v_j,\frac18\varepsilon n)\setminus\overline{B(v_j,3\varepsilon^Mn)}\right)= \emptyset \right],\qquad
x \in B\left(v_j,2\varepsilon^M n\right), 
\]
there exists a universal constant $c_*>0$ such that the $j$th probability is bounded from above by 
\[
\P^0 \left[
\begin{array}{c}
\text{ $T_{2i-1}<\infty$, $\tau_{2k}<\infty$, $L_{i,k}\cap B(v_j,3\varepsilon^Mn)\neq\emptyset$, and}\\[5pt]
\P^{2,v_j} \left[ R^2[0, T^2_{v_j,\frac18\varepsilon n}] \cap L_{i,k}\cap\left(B(v_j,\frac18\varepsilon n)\setminus\overline{B(v_j,3\varepsilon^Mn)}\right)= \emptyset \right] > c_*\varepsilon^7
\end{array}
\right].
\]
We apply Lemma~\ref{l:4.6} to $\loe(R[0,T_{2i-1}])[0,\tau_{2k}]$ with $v=v_j$, $r = \frac18\varepsilon n$, $s=3\varepsilon^Mn$, 
choosing $M$ so that $\left(\frac sr\right)^\eta = \left(24\,\varepsilon^{M-1}\right)^\eta<c_*\varepsilon^7$, 
and choosing $K$ so that $\left(\frac sr\right)^K = \left(24\,\varepsilon^{M-1}\right)^K < \varepsilon^{7+3M}$. 
This gives that the above probability is $\leq C\varepsilon^{7+3M}$. 
Thus, the original probability is $\leq (C\varepsilon^{7+3M})\cdot s \leq C'\varepsilon^7$. 
\end{proof}

\medskip

Let $E_i$ be the event in \eqref{eq:ql:i} and $E_{i,k}$ the event in \eqref{eq:taui} with $M$ as in Claim~\ref{cl:ql:beurling}. 
By Claim~\ref{cl:taui}, $\P^0[E_i]\leq i\,\max_k\P^0[E_{i,k}]$. 
On the event $E_{i,k}$, define the stopping time 
\[
\sigma_{i,k} = \inf\left\{t>T_{2i-1}~:~R(t)\in B(v,2\varepsilon^2n),~\mathrm{dist}(R(t),L_{i,k})\leq 2\varepsilon^M n\right\}.
\]
By the definition of $E_{i,k}$, $\sigma_{i,k}<T_{2i}$ and $R[\sigma_{i,k},T_{2i}]\cap L_{i,k} = \emptyset$. 
By Claim~\ref{cl:ql:beurling} and the strong Markov property, if $F_{i,k}$ is the event in \eqref{eq:ql:beurling}, then 
\[
\P^0[E_{i,k}]\leq \P^0[F_{i,k}] + \varepsilon^7 \leq (C+1)\varepsilon^7. 
\]
Thus, there exists $C$ such that for all $i$, $\P^0[E_i]\leq C\,i\,\varepsilon^7$. 
To conclude the proof, we notice that for each $i\geq 7$, 
\[
\P^0\left[T_{2i-1}<\infty\right]\leq C\,\varepsilon^i\leq C\,\varepsilon^7.
\]
By Claim~\ref{cl:Ti}, 
\[
\P^0\left[
\ql\left(\varepsilon^Mn,\varepsilon n;~\loe(R[0,T_{0,n}])\right)\cap B(v,\varepsilon^2 n)\neq\emptyset\right]
\leq \P^0\left[T_{13}<\infty\right] + \sum_{i=1}^7\P^0[E_i] \leq C\,\varepsilon^7.
\]
The proof of Lemma~\ref{l:ql:case3} is complete. 
\end{proof}

\bigskip

Exactly the same argument as in the second case in the proof of Theorem~\ref{thm:ql} gives the following result. 
\begin{proposition}\label{prop:bql}
There exist $\alpha>0$ and $C<\infty$ such that for all $\varepsilon>0$ and $n\geq 1$, if $L = \loe(R[0,T_{0,n}])$ then
\[
\P^0\left[
\begin{array}{c}
\text{There exist $i$ and $j$ such that $L(i)\notin B(0,n-\varepsilon n)$, }\\ 
\text{$L(j)\notin B(0,n-\varepsilon n)$, and $L[i,j]\nsubseteq B(L(i),\sqrt{\varepsilon}n)$}
\end{array}
\right] \leq C\,\varepsilon^\alpha.
\]
\end{proposition}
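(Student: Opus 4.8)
The plan is to follow, essentially verbatim, the argument used for the second case in the proof of Theorem~\ref{thm:ql}: I would translate the event into a statement about the underlying walk $R$ and then invoke the exit estimate \cite[Lemma~2.5]{K}. Throughout I assume $\varepsilon$ is sufficiently small. Write $L=\loe(R[0,T_{0,n}])$. The first ingredient is the standard bookkeeping of loop erasure, read off from the recursion in Section~\ref{sec:lerw}: there are times $\sigma_1<\sigma_2<\dots<\sigma_{\mathrm{len}\,L}$, with $\sigma_{\mathrm{len}\,L}=T_{0,n}$, such that $R(\sigma_k)=L(k)$ for every $k$; consequently, for $i\le k\le j$ one has $\sigma_i\le\sigma_k\le\sigma_j$, and hence $L[i,j]\subseteq R[\sigma_i,\sigma_j]$.

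Next I would carry out the reduction. Suppose the event in the statement holds, with some $i\le j$; necessarily $i<j$, since $L[i,i]=\{L(i)\}\subseteq B(L(i),\sqrt\varepsilon n)$. Only the hypothesis $L(i)\notin B(0,n-\varepsilon n)$ will be used. Since $R(\sigma_i)=L(i)$ satisfies $|L(i)|>n-\varepsilon n$ while $R(0)=0\in B(0,n-\varepsilon n)$ and the walk moves by unit steps, we get $\sigma_i\ge T_{0,n-\varepsilon n}=:\tau$, whence $L[i,j]\subseteq R[\sigma_i,\sigma_j]\subseteq R[\tau,T_{0,n}]$ (using also $\sigma_j\le T_{0,n}$). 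As $L[i,j]\nsubseteq B(L(i),\sqrt\varepsilon n)$, the set $R[\tau,T_{0,n}]$ contains a vertex $z$ with $|z-R(\sigma_i)|>\sqrt\varepsilon n$; combining this with $R(\sigma_i)\in R[\tau,T_{0,n}]$ and the triangle inequality --- if $|R(\sigma_i)-R(\tau)|>\tfrac12\sqrt\varepsilon n$ use the vertex $R(\sigma_i)$, and otherwise $|z-R(\tau)|\ge|z-R(\sigma_i)|-|R(\sigma_i)-R(\tau)|>\tfrac12\sqrt\varepsilon n$ --- one sees that $R[\tau,T_{0,n}]$ contains a vertex at distance $>\tfrac12\sqrt\varepsilon n$ from $R(\tau)$, so the walk $R[\tau,T_{0,n}]$ must pass through $\partial B(R(\tau),\tfrac14\sqrt\varepsilon n)$. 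Hence the event in the statement is contained in
\[
\Bigl\{\,R[T_{0,n-\varepsilon n},\,T_{0,n}]\cap\partial B\bigl(R(T_{0,n-\varepsilon n}),\,\tfrac14\sqrt\varepsilon n\bigr)\neq\emptyset\,\Bigr\}.
\]

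It then remains to bound the probability of this last event by $C\varepsilon^\alpha$. By the strong Markov property at $T_{0,n-\varepsilon n}$ this is equivalent to the uniform statement that a random walk started at any vertex of $\partial B(0,n-\varepsilon n)$ exits $B(0,n)$ before travelling a distance $\tfrac14\sqrt\varepsilon n$ with probability at least $1-C\varepsilon^\alpha$. This is exactly the estimate closing the second case of the proof of Theorem~\ref{thm:ql}, and it follows from \cite[Lemma~2.5]{K}: the starting vertex lies within distance $\varepsilon n+1$ of the sphere $\{x\in\R^3:|x|=n\}$, so, splitting on whether the exit time of $B(0,n)$ exceeds $\varepsilon^{3/2}n^2$, the walk leaves $B(0,n)$ within time $\varepsilon^{3/2}n^2$ except with probability $O(\varepsilon^{1/4})$, whereas within that time it travels $\tfrac14\sqrt\varepsilon n$ only with probability $e^{-c\varepsilon^{-1/2}}$; this gives the bound with, say, $\alpha=\tfrac14$. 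The only steps requiring any care are the passage from $L$ to $R$ via the times $\sigma_k$ and the harmless $O(1)$ lattice correction in going from ``a vertex at distance $>\tfrac12\sqrt\varepsilon n$'' to ``meets $\partial B(\cdot,\tfrac14\sqrt\varepsilon n)$''; there is no genuine obstacle, which is precisely why the authors can just assert that the argument is the same as for Theorem~\ref{thm:ql}.
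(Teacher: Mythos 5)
Your proof is correct and follows essentially the same route as the paper, which simply states that ``exactly the same argument as in the second case in the proof of Theorem~\ref{thm:ql}'' applies: one reduces the event about $L$ to the event that $R$, after first hitting $\partial B(0,n-\varepsilon n)$, travels a macroscopic distance $\asymp\sqrt\varepsilon\,n$ before exiting $B(0,n)$, and then invokes the exit estimate from \cite[Lemma~2.5]{K}. Your bookkeeping with the times $\sigma_k$ and the triangle-inequality step to land on $\partial B(R(\tau),\tfrac14\sqrt\varepsilon\,n)$ is a clean rendering of this; the small differences in constants (the paper's $n-2\varepsilon n$ and $\sqrt\varepsilon\,n-3\varepsilon n$ come from the extra $\varepsilon n$ slack present in the quasi-loop setting of Theorem~\ref{thm:ql}, which is absent here) are immaterial.
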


\subsection{Proof of Theorem~\ref{thm:simplepath}}\label{sec:simplepath:proof}

The proof of the theorem follows from Theorem~\ref{thm:ql} and Proposition~\ref{prop:bql} similarly to \cite[Theorem~1.1]{Sch}. 
Recall that a compact subset of $\R^d$ is a simple path if it is homeomorphic to $[0,1]$. 
We begin with the following observation, which is analogous to \cite[Theorem~3.5]{Sch}. 

For a simple path $\gamma$ and $x,y\in\gamma$, we denote by $D(x,y;\gamma)$ the diameter (with respect to the Euclidean distance) 
of the arc of $\gamma$ joining $x$ and $y$. For a function $f:(0,\infty)\to(0,\infty)$, let $\Gamma(f)$ be the set of paths from $\Gamma$ such that 
for all $x,y\in\gamma$, 
\[
\begin{array}{rll}
|x-y| &\geq &f(D(x,y;\gamma)),\\ 
\max\left(\mathrm{dist}(x,\partial D),\mathrm{dist}(y,\partial D)\right) &\geq &f(D(x,y;\gamma)),
\end{array}
\]
and denote by $\overline{\Gamma(f)}$ the closure of $\Gamma(f)$ in $(\mathcal K_D, d_H)$. 
\begin{lemma}\label{l:Gammaf}
For any monotone increasing and continuous function $f:(0,\infty)\to(0,\infty)$, 
\[
\overline{\Gamma(f)}\subset \Gamma.
\]
\end{lemma}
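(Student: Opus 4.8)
The plan is to show that any element $K \in \overline{\Gamma(f)}$ is a simple path with $K^s = 0$ and $K \cap \partial D = \{K^e\}$. Since $K$ is a limit in $(\mathcal K_D, d_H)$ of paths $\gamma_k \in \Gamma(f)$, I would first extract from the limit the fact that $K$ is a connected compact set containing $0$ and meeting $\partial D$ (connectedness is preserved under Hausdorff limits, and $0 \in \gamma_k$, $\gamma_k \cap \partial D \neq \emptyset$ for all $k$). The real work is to show $K$ is homeomorphic to $[0,1]$ and to control its endpoints, and this is where the quantitative constraints defining $\Gamma(f)$ enter.

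The key idea is that the two inequalities defining $\Gamma(f)$ are exactly the obstructions to the limit developing a ``quasi-loop'' or backtracking to the boundary. Concretely, I would parametrize each $\gamma_k$ by, say, a homeomorphism $\phi_k : [0,1] \to \gamma_k$ with $\phi_k(0) = 0$ and $\phi_k(1) = \gamma_k^e \in \partial D$, chosen so that the family $\{\phi_k\}$ is equicontinuous. Equicontinuity is forced by the first inequality $|x-y| \geq f(D(x,y;\gamma))$: if $\phi_k(s)$ and $\phi_k(t)$ are Euclidean-close, then $f(D(\phi_k(s),\phi_k(t);\gamma_k))$ is small, hence (since $f$ is increasing and continuous, so $f^{-1}$ is well-defined and continuous on its range) the diameter of the sub-arc between them is small; one then needs a mild reparametrization (e.g. by normalized arc-length or by the ``diameter of the arc up to $s$'') so that closeness of parameters translates into closeness of arc-diameters. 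By Arzel\`a--Ascoli, pass to a subsequence so that $\phi_k \to \phi$ uniformly; then $\phi$ is continuous, $\phi([0,1]) = K$ (uniform convergence of parametrizations implies Hausdorff convergence of images, and Hausdorff limits are unique), $\phi(0) = 0$, and $\phi(1) \in \partial D$.

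Next I would show $\phi$ is injective, which gives that $K$ is a simple path (a continuous injection from a compact space is a homeomorphism onto its image). Suppose $\phi(s) = \phi(t)$ with $s < t$; I want a contradiction. Passing to the limit in $|x - y| \geq f(D(x,y;\gamma_k))$ with $x = \phi_k(s_k)$, $y = \phi_k(t_k)$ for suitable $s_k \to s$, $t_k \to t$: the left side tends to $0$, so $f(D(\phi_k(s_k),\phi_k(t_k);\gamma_k)) \to 0$, hence the arc-diameters tend to $0$; but with the diameter-based reparametrization this forces $|t - s| \to 0$, i.e. $s = t$, a contradiction. The same limiting argument applied to the second inequality $\max(\mathrm{dist}(x,\partial D),\mathrm{dist}(y,\partial D)) \geq f(D(x,y;\gamma))$ shows that $K$ cannot touch $\partial D$ at two distinct arc-parameters and cannot touch $\partial D$ at an interior point that is ``arc-far'' from the endpoint: if $\phi(s) \in \partial D$ for some $s < 1$, take $t = 1$, so both $\phi(s)$ and $\phi(1)$ lie on $\partial D$, the left side of the second inequality vanishes in the limit, forcing the arc-diameter between them, and hence $1 - s$, to zero. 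Thus $K \cap \partial D = \{\phi(1)\} = \{K^e\}$ and $0 = \phi(0) \neq \phi(1)$, so $K \in \Gamma$.

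The main obstacle I anticipate is the bookkeeping around the parametrizations: the inequalities in the definition of $\Gamma(f)$ are phrased in terms of $D(x,y;\gamma)$, the diameter of the connecting arc, not in terms of a fixed parametrization, so I must choose the $\phi_k$ so that (i) the family is equicontinuous and (ii) control of arc-diameters in the limit yields control of parameter-differences. The natural choice is to let the parameter of $x \in \gamma_k$ be a normalized version of $\mathrm{diam}(\gamma_k[\gamma_k^s, x])$ (the diameter of the initial arc up to $x$), suitably monotonized; one must check this is a genuine reparametrization (strictly increasing after collapsing plateaus, which is fine since $\gamma_k$ is a simple path and the initial-arc diameter is non-decreasing) and that under it the estimate $D(x,y;\gamma_k) \to 0$ does force $|s_k - t_k| \to 0$ uniformly — this last point uses that the total diameter of $\gamma_k$ is bounded (it lies in $\overline D$) together with a compactness/uniformity argument, possibly after passing to a further subsequence along which the monotone ``diameter profile'' functions themselves converge. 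Once the parametrization is set up correctly, every remaining step is a routine passage to the limit using continuity of $f$ and $f^{-1}$.
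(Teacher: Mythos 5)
Your approach (construct uniformly equicontinuous parametrizations $\phi_k$ of $\gamma_k$, pass to a uniform limit $\phi$ via Arzel\`a--Ascoli, then argue injectivity) is genuinely different from the paper's, which never parametrizes anything: the paper works directly with the compact set $\gamma = \lim \gamma_n$, shows $|\gamma\cap\partial D|=1$ using the boundary inequality, and then invokes Janiszewski's topological characterization of arcs (a compact connected metric space with two distinguished points $a,b$ such that removing any non-endpoint disconnects it is an arc from $a$ to $b$). To verify the disconnection condition, it splits $\gamma_n$ at $x_n\to x$ into two sub-arcs $\gamma_n^1,\gamma_n^2$, passes to Hausdorff subsequential limits $\gamma^1,\gamma^2$, and uses the first inequality (together with the fact that any arc from a point of $\gamma_n^1$ to a point of $\gamma_n^2$ passes through $x_n$, so $D(p_n,q_n;\gamma_n)\geq|x_n-q_n|$) to conclude $\gamma^1\cap\gamma^2\subseteq\{x\}$, hence $\gamma\setminus\{x\}$ is disconnected.

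Your proposal has a genuine gap exactly where you flag ``the main obstacle,'' and it is more than bookkeeping. The parametrization you propose, $\psi_k(x)=\mathrm{diam}(\gamma_k[\gamma_k^s,x])$ suitably normalized, is in general not injective: the initial-arc diameter can plateau on nontrivial sub-arcs (e.g. run straight to a far point and then wander locally without increasing the diameter), and paths with such plateaus are perfectly allowed in $\Gamma(f)$. ``Collapsing plateaus'' does not produce a homeomorphism $[0,1]\to\gamma_k$, only a non-injective continuous surjection, so $\phi_k=\psi_k^{-1}$ is ill-defined at plateaus. Worse, even ignoring this, the inequality you actually get from the parametrization runs the wrong way for equicontinuity: one has $|\psi_k(x)-\psi_k(y)|\leq D(x,y;\gamma_k)/\mathrm{diam}(\gamma_k)$, i.e.\ small arc-diameter implies small parameter increment, but equicontinuity of $\phi_k$ requires that a small parameter increment forces a small arc-diameter, which fails precisely at the plateaus. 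The constraint $|x-y|\geq f(D(x,y;\gamma_k))$ gives an upper bound $D(x,y;\gamma_k)\leq f^{-1}(|x-y|)$, again the wrong direction for equicontinuity of any parametrization. Perturbing by $\epsilon_k$ times normalized arc length restores strict monotonicity but does not repair equicontinuity, since arc lengths are unbounded in $\Gamma(f)$ and the diameter plateaus persist. So the Arzel\`a--Ascoli step, as described, does not go through; this is precisely the difficulty the paper's Janiszewski argument sidesteps by never needing a parametrization at all, only Hausdorff limits of sub-arcs and the two defining inequalities.
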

\begin{proof}[Proof of Lemma~\ref{l:Gammaf}]
Let $\gamma\in\overline{\Gamma(f)}$. Then $\gamma$ is a compact, connected subset of $\overline D$, such that $0\in\gamma$ and $\gamma\cap\partial D\neq\emptyset$. 

We first show that $|\gamma\cap\partial D| = 1$. Assume that there exist two different points $p,q\in\gamma\cap\partial D$. 
Let $\gamma_n\in\Gamma(f)$ be such that $d_H(\gamma_n,\gamma)<\frac1n$ and $p_n,q_n\in\gamma_n$ such that $|p_n-p|<\frac1n$ and $|q_n-q|<\frac1n$. 
In particular, $\mathrm{dist}(p_n,\partial D)<\frac1n$ and $\mathrm{dist}(q_n,\partial D)<\frac1n$. 
Since $\gamma_n\in\Gamma(f)$, 
\[
\frac1n > \max\left(\mathrm{dist}(p_n,\partial D),\mathrm{dist}(q_n,\partial D)\right)\geq f(D(p_n,q_n;\gamma_n))\geq f(|p_n-q_n|).
\]
By passing to the limit, we conclude that $p=q$. Thus, $|\gamma\cap\partial D| = 1$. We denote this point by $b$. 

\medskip

It remains to show that $\gamma$ is a simple path with $\gamma^e = b$. 
We will use the following Janiszewski's topological characterization of arcs (see, e.g., \cite[Lemma~3.6]{Sch}):
\begin{equation}\label{eq:Janiszewski}
\begin{array}{c}
\text{Let $\gamma$ be a compact, connected metric space, $a,b\in\gamma$.}\\
\text{If $\gamma\setminus\{x\}$ is disconnected for all $x\in\gamma\setminus\{a,b\}$, then 
$\gamma$ is a simple path from $a$ to $b$.}
\end{array}
\end{equation}

\medskip

Let $x\in\gamma\setminus\{0,b\}$. Let $\gamma_n\in\Gamma(f)$ be such that $d_H(\gamma_n,\gamma)<\frac1n$, 
and $x_n\in\gamma_n$ such that $|x_n-x|<\frac1n$. 
For each $n$, let $\gamma_n^1$ be the closed arc of $\gamma_n$ connecting $0$ and $x_n$, and 
$\gamma_n^2$ the closed arc of $\gamma_n$ connecting $z_n$ and $\gamma_n^e$. 
By passing to a subsequence, if necessary, assume with no loss of generality that 
$\gamma_n^1$ converges to $\gamma^1$ and $\gamma_n^2$ converges to $\gamma^2$ in $(\mathcal K_D, d_H)$. 
Note that $\gamma^1$ and $\gamma^2$ are compacts with $\gamma^1\cup\gamma^2 = \gamma$. 
For any $p\in\gamma^1$ and $q\in\gamma^2$, let $p_n\in\gamma_n^1$ be a sequence converging to $p$, 
and $q_n\in\gamma_n^2$ a sequence converging to $q$. 
Since $\gamma_n\in\Gamma(f)$, 
\[
|p_n-q_n|\geq f(D(p_n,q_n;\gamma_n))\geq f(|x_n-q_n|).
\]
By passing to the limit, we get that $|p-q|\geq f(|x-q|)$, for all $p\in\gamma^1$ and $q\in\gamma^2$, 
which implies that $\gamma^1\setminus\{x\}$ and $\gamma^2\setminus\{x\}$ are disjoint.
Therefore, $\gamma\setminus\{x\}$ is disconnected. 
By \eqref{eq:Janiszewski}, $\gamma$ is a simple path from $0$ to $b$. 

\medskip

We have shown that $\gamma\in \Gamma$.
\end{proof}

\medskip

The main ingredient for the proof of Theorem~\ref{thm:simplepath} is the following lemma. 
\begin{lemma}\label{l:lewfm}
The exist $C,M<\infty$ and $\alpha>0$ such that for all $m,n\geq 1$, 
\[
\P\left[\lew_n\in\Gamma(f_m)\right]\geq 1 - C\,2^{-\alpha m},
\]
where 
\[
f_m (s) = \min\left(2^{-Mm},~\left(\frac s4\right)^{2M}\right).
\]
\end{lemma}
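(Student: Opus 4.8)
The strategy is to convert the quasi-loop estimate of Theorem~\ref{thm:ql} and the boundary quasi-loop estimate of Proposition~\ref{prop:bql}, which control the \emph{discrete} path $\loe(R[0,T_{0,n}])$, into the statement that $\lew_n$ lies in $\Gamma(f_m)$. Recall that $\lew_n$ is the rescaled linear interpolation of $\loe(R[0,T_{0,n}])$, so a point $x\in\lew_n$ is either a rescaled lattice vertex or lies on an interpolating segment of Euclidean length $\leq \frac1n$. The two defining conditions of $\Gamma(f_m)$ are, for all $x,y\in\gamma$, that $|x-y|\geq f_m(D(x,y;\gamma))$ and that $\max(\mathrm{dist}(x,\partial D),\mathrm{dist}(y,\partial D))\geq f_m(D(x,y;\gamma))$. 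I would prove the contrapositive: if one of these fails for some pair $x,y$, then the discrete LERW exhibits either a quasi-loop of a suitable scale, or backtracking near the boundary of $B(0,n)$, both of which have probability $\leq C\varepsilon^\alpha$ once $\varepsilon$ is taken to be a negative power of $2^{m}$.

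The key steps, in order, are as follows. First, fix a dyadic scale: I would take $\varepsilon = 2^{-m}$ (adjusting the constant $M$ in $f_m$ to match the $M$ coming from Theorem~\ref{thm:ql}), so that $f_m(s) = \min(\varepsilon^{M}, (s/4)^{2M})$. Second, suppose the first condition fails: there exist $x,y\in\lew_n$ with $|x-y| < f_m(D(x,y;\lew_n))$. Since $f_m \leq \varepsilon^M$, this forces $|x-y| < \varepsilon^M$, so the nearby lattice vertices $x',y'$ of $\loe(R[0,T_{0,n}])$ (rescaled) satisfy $|x'-y'| \lesssim \varepsilon^M$ as well; meanwhile the sub-arc of $\lew_n$ between $x$ and $y$ has diameter $D = D(x,y;\lew_n)$ with $f_m(D) > |x-y|$, and since $f_m(D) \leq (D/4)^{2M}$ this gives a lower bound $D \gtrsim |x-y|^{1/(2M)}$, but more importantly $D$ cannot be too small — if $D \geq \sqrt{\varepsilon}$ after rescaling (i.e.\ the arc reaches distance $\geq \sqrt{\varepsilon}n$ in the unscaled picture) then we have produced a $(\varepsilon^M n, \sqrt{\varepsilon}n)$-quasi-loop at the vertex $nx'$, and Theorem~\ref{thm:ql} bounds this by $C\varepsilon^\alpha = C 2^{-\alpha m}$. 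The case where $D$ is itself small, say below $\sqrt\varepsilon$, is handled because then $f_m(D) = (D/4)^{2M}$ is much smaller than $D$ itself, and a sub-arc of diameter $D$ between two points at distance $< (D/4)^{2M}$ is, after choosing an intermediate dyadic scale, again a quasi-loop $\ql(\rho^{M'}, \rho)$ at a rescaled vertex for $\rho \sim D$; summing the resulting geometric series over dyadic scales $\rho \in [1/n, 1]$ keeps the bound of the form $C2^{-\alpha m}$ provided $M$ was chosen large enough relative to $\alpha$. Third, suppose the second condition fails: there exist $x,y$ with both $\mathrm{dist}(x,\partial D), \mathrm{dist}(y,\partial D) < f_m(D(x,y;\gamma)) \leq \varepsilon^M \leq \varepsilon$, while the arc between them has diameter $D$; if $D \geq \sqrt\varepsilon$ this is precisely the event estimated in Proposition~\ref{prop:bql} (two points of $L$ outside $B(0,n-\varepsilon n)$ with $L[i,j]\not\subseteq B(L(i),\sqrt\varepsilon n)$), bounded by $C\varepsilon^\alpha$; if $D < \sqrt\varepsilon$ we are again in the quasi-loop regime near the boundary and combine with the dyadic-scale argument as before (or simply note the arc stays in a small ball near $\partial D$, reducing to the $D \geq \sqrt\varepsilon$ case at a smaller scale).

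The main obstacle I anticipate is the bookkeeping in the \emph{dyadic-scale summation} — making sure that when $D(x,y;\gamma)$ ranges over all scales from $1/n$ up to $O(1)$, the probabilities from Theorem~\ref{thm:ql} applied at scale $\rho$ (which give $C\rho^{\alpha'}$-type bounds, or rather $C$ times a power of the ratio of the two radii involved) sum to something still of order $2^{-\alpha m}$, and this is exactly what forces the particular form $f_m(s) = (s/4)^{2M}$ with $M$ large: the exponent $2M$ must beat the loss incurred in the union bound over the $O(\rho^{-3})$ balls of radius $\rho^2 n$ needed to cover an annulus, as in the proof of Lemma~\ref{l:ql:case3}. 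A secondary, more routine point is the interpolation error: replacing $x,y\in\lew_n$ by genuine lattice vertices costs an additive $O(1/n)$, which is harmless since $f_m$ is bounded below by a fixed power of $\varepsilon$ independent of $n$, but one must be slightly careful when $n$ is small relative to $\varepsilon^{-1}$, where the statement may be vacuous or follow trivially because $\lew_n$ has diameter comparable to $1$ and few vertices.
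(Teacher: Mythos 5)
Your proposal follows essentially the same route as the paper's proof: fix $M$ from Theorem~\ref{thm:ql}, for each dyadic scale $\varepsilon=2^{-i}$ with $i\geq m$ introduce the quasi-loop event and the boundary-backtracking event, bound their probabilities by $C2^{-\alpha i}$ via Theorem~\ref{thm:ql} and Proposition~\ref{prop:bql}, sum the geometric series, and then observe that on the complement of $\bigcup_{i\geq m}(E_i\cup F_i)$ the bounds $|x-y|\geq f_m(D(x,y;\lew_n))$ and $\max(\mathrm{dist}(x,\partial D),\mathrm{dist}(y,\partial D))\geq f_m(D(x,y;\lew_n))$ hold for all $x,y\in\lew_n$. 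The minor bookkeeping you flag (interpolation error, the small-$n$ and small-$D$ regimes) is handled in the paper precisely by this union over all dyadic scales $i\geq m$, and your plan correctly anticipates that.
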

\begin{proof}[Proof of Lemma~\ref{l:lewfm}]
Let $M$ be as in Theorem~\ref{thm:ql}. 
Let $n\geq 1$. 
Denote by $L = \loe(R[0,T_{0,n}])$. For $\varepsilon>0$, consider the events
\[
\begin{array}{rll}
E_\varepsilon &= &\left\{\ql(\varepsilon^Mn,\sqrt{\varepsilon} n,L)\neq\emptyset\right\},\\[3pt]
F_\varepsilon &= &\left\{\text{For some $i$ and $j$, $L(i),L(j)\notin B(0,n-\varepsilon n)$, $L[i,j]\nsubseteq B(L(i),\sqrt{\varepsilon}n)$}\right\},
\end{array}
\]
and denote by $E_i = E_{2^{-i}}$ and $F_i = F_{2^{-i}}$. 
By Theorem~\ref{thm:ql} and Proposition~\ref{prop:bql}, there exist $C<\infty$ and $\alpha>0$ such that 
\[
\P\left[\bigcup_{i=m}^\infty E_i\cup F_i\right]\leq C\,2^{-\alpha m}.
\]
Therefore, it suffices to show that 
\[
\left\{\lew_n\notin \Gamma(f_m)\right\}\subseteq \bigcup_{i=m}^\infty E_i\cup F_i.
\]
Assume that $E_i$ does not occur. Let $x,y\in \lew_n$ with $2^{-M(i+1)}\leq |x-y|< 2^{-Mi}$. Then, 
\[
D(x,y; \lew_n) \leq 2\sqrt{2^{-i}} \leq 4\, |x-y|^{\frac{1}{2M}}.
\]
Assume that $F_i$ does not occur. Let $x,y\in\lew_n$ with 
\[
2^{-(i+1)} \leq \max\left(\mathrm{dist}(x,\partial D),\mathrm{dist}(y,\partial D)\right)< 2^{-i}.
\]
Then, 
\[
D(x,y;\lew_n)\leq 2\,\sqrt{2^{-i}} \leq 4\,\max\left(\mathrm{dist}(x,\partial D),\mathrm{dist}(y,\partial D)\right)^{\frac12}.
\]
Thus, if $\bigcup_{i=m}^\infty E_i\cup F_i$ does not occur, then for all $x,y\in \lew_n$ with $|x-y|< 2^{-Mm}$ or 
$\max\left(\mathrm{dist}(x,\partial D),\mathrm{dist}(y,\partial D)\right)<2^{-Mm}$, 
\[
\min\left(|x-y|,\max\left(\mathrm{dist}(x,\partial D),\mathrm{dist}(y,\partial D)\right)\right)
\geq \left(\frac{D(x,y;\lew_n)}{4}\right)^{2M}.
\]
This implies that if $\bigcup_{i=m}^\infty E_i\cup F_i$ does not occur, then for all $x,y\in\lew_n$, 
\[
\min\left(|x-y|,\max\left(\mathrm{dist}(x,\partial D),\mathrm{dist}(y,\partial D)\right)\right)
\geq f_m\left(D(x,y;\lew_n)\right).
\]
Since $\lew_n\in\Gamma$, we have proved that $\lew_n\in \Gamma(f_m)$. 
The proof of Lemma~\ref{l:lewfm} is complete.
\end{proof}

\medskip

Theorem~\ref{thm:simplepath} easily follows from Lemmas~\ref{l:Gammaf} and \ref{l:lewfm}. 
\begin{proof}[Proof of Theorem~\ref{thm:simplepath}]
Let $f_m$ be as in Lemma~\ref{l:lewfm}. By Lemma~\ref{l:lewfm}, there exist $C<\infty$ and $\alpha>0$ such that for all $m,n\geq 1$, 
$\P\left[\lew_n\in\Gamma(f_m)\right]\geq 1 - C\,2^{-\alpha m}$. 
By the Portmanteau theorem (see, e.g., \cite[Theorem~2.1]{Billingsley}), for each $m\geq 1$, 
\[
\P\left[K\in\overline{\Gamma(f_m)}\right]\geq \limsup_{n\to\infty}\P\left[\lew_n\in\overline{\Gamma(f_m)}\right]
\geq \limsup_{n\to\infty}\P\left[\lew_n\in\Gamma(f_m)\right]\geq 1 - C\,2^{-\alpha m}.
\]
Since $f_m$ is increasing and continuous, by Lemma~\ref{l:Gammaf}, for all $m\geq 1$. 
\[
\P\left[K\in\Gamma\right]\geq \P\left[K\in\overline{\Gamma(f_m)}\right]\geq 1 - C\,2^{-\alpha m}.
\]
Thus, $\P\left[K\in\Gamma\right]=1$. 
\end{proof}

\section{Hausdorff dimension}\label{sec:Hausdorffdimension}

\subsection{Preliminaries on loop erased walks}\label{sec:Hausdorffdimension:preliminaries}

In this section we collect some auxiliary results about loop erased random walk. Let 
\[
\es(m,n) = \P^{1,0}\otimes\P^{2,0}\left[\loe(R^1[0,\infty))[s_m,t_n]\cap R^2[1,T^2_{0,n}] = \emptyset\right],
\]
where $t_n$ is the first time that $\loe(R^1[0,\infty))$ exits from $B(0,n)$, and 
$s_m$ is its last visit to $B(0,m)$ before time $t_n$.
We define $\es(m,n) = 1$ for $m\geq n$. 
Let
\[
\alpha = 2 - \beta \in [\frac13,1),
\]
where $\beta = \lim_{n\to\infty}\frac{\log \E^{0,1}\left[\mathrm{len}\,\loe(R^1[0,T^1_{0,n}])\right]}{\log n}$ is the growth exponent of the loop erased random walk. 
Its existence is shown in \cite{Shi13}. 
By \cite[Lemma~8.1.1]{Shi13}, for all $\epsilon>0$ there exist $C_{1,\epsilon},C_{2,\epsilon}\in(0,\infty)$ such that for all $m,n$, 
\begin{equation}\label{eq:es:bounds}
C_{1,\epsilon}\,\left(\frac mn\right)^{\alpha + \epsilon} \leq \es(m,n) \leq C_{2,\epsilon}\,\left(\frac mn\right)^{\alpha - \epsilon}.
\end{equation}
The following lemma is the main ingredient in the proof of the upper bound on the Hausdorff dimension of $\koz$. 
In its proof we will only use the upper bound from \eqref{eq:es:bounds}. 
\begin{lemma}\label{l:ball:proba}
For any $\delta>0$ there exists $C_\delta<\infty$ such that for all $\varepsilon>0$, $n\geq 1$ and $x\in B(0,n-2\varepsilon n)\setminus \overline{B(0,2\varepsilon n)}$, 
\begin{equation}\label{eq:ball:proba}
\P^{1,0}\left[\loe(R^1[0,T^1_{0,n}])\cap B(x,\varepsilon n)\neq \emptyset\right]\leq C_\delta\cdot \left(\frac{\varepsilon n}{|x|}\right)^{1+\alpha - \delta}.
\end{equation}
\end{lemma}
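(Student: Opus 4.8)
The plan is to reduce the event $\{\loe(R^1[0,T^1_{0,n}])\cap B(x,\varepsilon n)\neq\emptyset\}$ to a non-intersection event to which the upper bound in \eqref{eq:es:bounds} can be applied. Write $L=\loe(R^1[0,T^1_{0,n}])$ and let $r=|x|$, so $2\varepsilon n\le r\le n-2\varepsilon n$. First I would observe the elementary domain-monotonicity fact for loop-erased walks: the loop erasure of $R^1$ stopped at $T^1_{0,n}$ agrees, up to the first exit of $\loe(R^1[0,\infty))$ from $B(0,n)$, with the loop erasure of the \emph{infinite} walk $R^1[0,\infty)$ (this is a standard consequence of the reversal/Markov structure of LERW, cf.\ \cite[Section~7.2]{L} or \cite{Shi13}). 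Hence for the purpose of hitting $B(x,\varepsilon n)\subset B(0,n)$ we may work with $\loe(R^1[0,\infty))[0,t_n]$ where $t_n$ is the exit time of $B(0,n)$; and since $B(x,\varepsilon n)$ lies in the annulus between radii $r-\varepsilon n\ge \tfrac12 r$ and $r+\varepsilon n$, the portion of $L$ relevant to the event is contained in the arc of $L$ between its last visit to $B(0,\tfrac12 r)$ and $t_n$.

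The heart of the argument is then the following: if $L$ enters $B(x,\varepsilon n)$, consider the \emph{last} visit of $L$ to $B(x,\varepsilon n)$, say at the point $y$, and the piece of $L$ from $y$ onward. Using Wilson's algorithm / the loop-erasure structure, conditionally on $L[0,\cdot]$ up to reaching $y$, the continuation of $R^1$ after the corresponding time is a random walk conditioned to avoid the already-erased path, and its loop erasure from $y$ to $\partial B(0,n)$ must avoid $B(x,\varepsilon n)$ again (by maximality of $y$). Thus the conditional probability of ever entering $B(x,\varepsilon n)$, given that $L$ has reached $\partial B(x,2\varepsilon n)$ nearby, is controlled by a quantity of the form of $1-\es$-type escape probabilities at scale $\varepsilon n$ inside $B(x,\cdot)$. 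More precisely, I would iterate over dyadic scales $2^k\varepsilon n$ between $\varepsilon n$ and $r/2$, centered at $x$: writing $A_k=B(x,2^{k+1}\varepsilon n)\setminus \overline{B(x,2^k\varepsilon n)}$, each time $L$ crosses $A_k$ inward it is, by the non-intersection estimate \eqref{eq:es:bounds} applied at the corresponding scale together with a second independent walk from near $x$ (to convert ``$L$ comes back to $B(x,\varepsilon n)$'' into a non-intersection probability, in the spirit of Lemmas~\ref{l:kozma:4.6} and \ref{l:4.6} and Claim~\ref{c:4.5}), unlikely to return; the one-step probability of a successful inward crossing of $A_k$ followed by eventual return to $B(x,\varepsilon n)$ is $\le C_\delta\, 2^{-k(1+\alpha-\delta)}\cdot(\text{something summable})$. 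Multiplying the reciprocals over $k=1,\dots,\lfloor\log_2(r/(2\varepsilon n))\rfloor$ and using that $\es(m,n)\le C_\epsilon (m/n)^{\alpha-\epsilon}$ together with the classical gambler's-ruin factor $m/n$ for a single walk to reach a small ball — whose product gives the exponent $1+\alpha$ — yields
\[
\P^{1,0}\left[L\cap B(x,\varepsilon n)\neq\emptyset\right]\le C_\delta\left(\frac{\varepsilon n}{r}\right)^{1+\alpha-\delta},
\]
which is exactly \eqref{eq:ball:proba}. The extra ``$+1$'' in the exponent, compared to the bare $\es$-exponent $\alpha$, comes from the fact that $L$ must first \emph{reach} a neighbourhood of $x$ at all, which costs a factor of order $\varepsilon n/r$ by the standard estimate \cite[Proposition~1.5.10]{L} for a transient-in-the-annulus random walk (equivalently, for the LERW which is dominated by the walk).

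The step I expect to be the main obstacle is making rigorous the conversion of ``$L$ returns to the small ball $B(x,\varepsilon n)$'' into a genuine non-intersection event for which \eqref{eq:es:bounds} applies, uniformly over the dyadic scales and over the position of $x$ in the annulus. The subtlety is that \eqref{eq:es:bounds} is stated for concentric balls around the origin of the LERW's starting point, whereas here we need a version centered at $x$, away from both endpoints of $L$; one must use the reversibility of LERW (Lawler's result that $\loe(R^1[0,T^1_{0,n}])$ reversed has an explicit description) to relocate the ``center'' of the non-intersection estimate, and combine it with a Harnack/last-exit decomposition exactly as in the proofs of Lemma~\ref{l:4.6} and Claim~\ref{c:4.5}. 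Once this relocation is in place, the dyadic multiplication and the bookkeeping of constants $C_\delta$ (absorbing the $\epsilon$-losses in \eqref{eq:es:bounds} into $\delta$) is routine.
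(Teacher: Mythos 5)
Your proposal captures the two quantitative ingredients that really drive the exponent, namely the escape estimate $\es$ supplying a factor $(\varepsilon n/R)^{\alpha-\delta}$ and a gambler's-ruin factor of order $\varepsilon n/R$ supplying the extra ``$+1$''. But the mechanism you propose for assembling them is not the one in the paper, and, as written, it does not work. The paper's proof is a \emph{single-scale} argument, not a dyadic iteration. Its skeleton is: (i) last-exit decomposition of the \emph{random walk} $R^1$ (not of $L$) at the ball $B=B(x,\varepsilon n)$, giving the exact identity
$\{\loe(R^1[0,T^1_{0,n}])\cap B\neq\emptyset\}=\{L<T^1_{0,n},\ R^1[L+1,T^1_{0,n}]\cap\lambda(\loe(R^1[0,L]))=\emptyset\}$, where $\lambda(\cdot)$ is the piece of the loop erasure up to first entry to $B$; (ii) Lawler's time-reversal of loop erasure \cite[Lemma~7.2.1]{L} to transport the base point of $\loe$ to $y\in B$, so that one is looking at the loop erasure of a walk from $y$ conditioned to hit $0$ before $\partial B(0,n)$; (iii) Masson's Radon--Nikodym comparison \cite[Section~4.1]{M09} to replace this conditioned walk by an unconditioned one up to a bounded factor, after which $\es(2\varepsilon n,\tfrac14 R_x)$ applies directly (here $R_x=\tfrac12\min(|x|,\operatorname{dist}(x,\partial B(0,n)))$); and (iv) the factor $\P^{1,y}[T^1_{0,n}>T^1_0]\lesssim d_x/(|x|\,n)$ from \cite[Proposition~1.5.10]{L}, plus summing over $y$ on the (two-dimensional) interior boundary of $B$. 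A separate, easier, case handles the event that $R^1$ re-enters $B(y,2\varepsilon n)$ after leaving $B(y,\tfrac14 R_x)$.

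The dyadic scheme you sketch is where the proposal genuinely breaks. You take one-scale crossing costs $p_k\le C\,2^{-k(1+\alpha-\delta)}$ and then speak of ``multiplying the reciprocals'' over $k$; that product is enormous, not small. If instead you meant to \emph{sum} the $p_k$, the sum is $O(1)$, not $(\varepsilon n/|x|)^{1+\alpha-\delta}$, because your $p_k$ already carries the full $k$-dependence, whereas a telescoping dyadic product needs a \emph{fixed} per-scale factor $2^{-(1+\alpha-\delta)}$. Either way the bookkeeping does not produce the claimed exponent, and the telescoping would need a genuine Markovian conditioning structure across annuli (as in Lemma~\ref{l:4.6}), which you do not set up. More fundamentally, you identify the conversion of ``$L$ hits $B(x,\varepsilon n)$'' into a bona fide non-intersection probability centered at $x$ as ``the step I expect to be the main obstacle'' and then leave it at the level of a pointer to other lemmas; that conversion (steps (i)--(iii) above) \emph{is} the proof, and without it there is nothing for $\es$ to bite on. In particular, the last-exit decomposition is at the level of $R^1$, not of $L$: decomposing at the last visit of the \emph{loop erasure} to $B$ does not give a clean conditional law for the future, because the future of $R^1$ (and hence which part of $L$ survives) depends on the full past of $R^1$, not only on $L$ so far. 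You also omit the Radon--Nikodym step, without which the escape estimate, stated for an unconditioned infinite walk, cannot be applied to the walk conditioned to reach $0$.
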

\begin{proof}
We write $B = B(x,\varepsilon n)$ throughout the proof. 
Let $L = \sup\{i\leq T^1_{0,n}:R^1(i)\in B\}$ be the time of last visit of $R^1$ to $B$ before leaving $B(0,n)$. 
We will use the following observation,
\[
\left\{\loe(R^1[0,T^1_{0,n}])\cap B\neq \emptyset\right\} = \left\{L<T^1_{0,n},~R^1[L+1,T^1_{0,n}]\cap\lambda(\loe(R^1[0,L])) = \emptyset \right\},
\]
where for a path $\gamma$, we write $\lambda(\gamma)$ for the piece of $\gamma$ from the start and until the first entrance to $B$. 
The probability of the event on the right hand side equals
\[
\sum_{y\in B}\sum_t\P^{1,0}\otimes \P^{2,y} \left[T^1_{0,n}>t,~R^1(t) = y,~ 
\lambda(\loe(R^1[0,t]))\cap R^2[1,T^2_{0,n}] = \emptyset,~R^2[1,T^2_{0,n}]\cap B = \emptyset\right].
\]
By the time reversibility of loop erasure, see \cite[Lemma~7.2.1]{L}, the probability in the sum equals to 
\[
\P^{1,y}\otimes \P^{2,y} \left[T^1_{0,n}>t,~R^1(t) = 0,~ 
\mu(\loe(R^1[0,t]))\cap R^2[1,T^2_{0,n}] = \emptyset,~R^2[1,T^2_{0,n}]\cap B = \emptyset\right],
\]
where for a path $\gamma$, we write $\mu(\gamma)$ for the piece of $\gamma$ from the last visit to $B$ until the end.
Summation over $t$ gives
\[
\P^{1,y}\otimes \P^{2,y} \left[T^1_{0,n}>T^1_0,~ 
\mu(\loe(R^1[0,T^1_0]))\cap R^2[1,T^2_{0,n}] = \emptyset,~R^2[1,T^2_{0,n}]\cap B = \emptyset\right],
\]
where $T^1_0 = T^1(\{0\})$. 
Let $d_x = \mathrm{dist}(x,\partial B(0,n))$ and define $R_x = \frac12\min(|x|,d_x)$. 
We first consider the case when $R^1$ returns to $B(y,2\varepsilon n)$ after leaving $B(y,\frac14 R_x)$:
\begin{multline*}
\sum_{y\in B}\P^{1,y}\otimes\P^{2,y}\left[R^1[T^1_{y,\frac14 R_x},T^1_{0,n}]\cap B(y,2\varepsilon n)\neq\emptyset,~T^1_{0,n}>T^1_0,~R^2[1,T^2_{0,n}]\cap B = \emptyset\right]\\
\stackrel{(*)}\leq C\cdot (\varepsilon n)^2\cdot \frac{\varepsilon n}{R_x}\cdot \frac{d_x}{|x|\,n}\cdot \frac{1}{R_x} 
\stackrel{(**)}\leq C'\cdot \left(\frac{\varepsilon n}{|x|}\right)^2
\leq C'\cdot \left(\frac{\varepsilon n}{|x|}\right)^{1 + \alpha},
\end{multline*}
where $(*)$ follows from \cite[Proposition~1.5.10]{L}, $(**)$ by considering separately the cases $R_x = |x|$ and $R_x = d_x$ and using $R_x\geq \varepsilon n$, 
and the last inequality from $\alpha\leq 1$. 

Thus, to establish \eqref{eq:ball:proba}, it remains to consider the case when $R^1$ does not return to $B(y,2\varepsilon n)$ after leaving $B(y,\frac14 R_x)$.
In this case, we have the inclusion 
\[
\left\{\mu(\loe(R^1[0,T^1_0]))\cap R^2[1,T^2_{0,n}] = \emptyset\right\}\subseteq 
\left\{\loe(R^1[0,T^1_0])[s,t]\cap R^2[1,T^2_{0,n}] = \emptyset\right\},
\]
where $t$ is the first time that $\loe(R^1[0,T^1_0])$ exits from $B(y,\frac14 R_x)$, and $s$ is its last visit time to $B(y,2\varepsilon n)$ before $t$. 
Therefore, in this case,
\begin{multline*}
\P^{2,y}\left[\mu(\loe(R^1[0,T^1_0]))\cap R^2[1,T^2_{0,n}] = \emptyset,~R^2[1,T^2_{0,n}]\cap B = \emptyset\right]\\
\leq 
\P^{2,y}\left[\loe(R^1[0,T^1_0])[s,t]\cap R^2[1,T^2_{0,n}] = \emptyset,~R^2[1,T^2_{0,n}]\cap B = \emptyset\right]\\
\stackrel{(*)}\leq
\P^{2,y}\left[R^2[1,T^2_{y,\varepsilon n}]\cap B = \emptyset\right]\cdot 
\max_{z\in\partial B(y,\varepsilon n)}\,\P^{2,z}\left[\loe(R^1[0,T^1_0])[s,t]\cap R^2[0,T^2_{0,n}] = \emptyset\right]\\
\stackrel{(**)}\leq
C\cdot \frac{1}{\varepsilon n}\cdot \P^{2,y}\left[\loe(R^1[0,T^1_0])[s,t]\cap R^2[0,T^2_{0,n}] = \emptyset\right],
\end{multline*}
where $(*)$ follows from the strong Markov property and $(**)$ from \cite[Proposition~1.5.10]{L} and the Harnack inequality. 
It remains to bound the probability
\[
\P^{1,y}\otimes \P^{2,y} \left[T^1_{0,n}>T^1_0,~ 
\loe(R^1[0,T^1_0])[s,t]\cap R^2[0,T^2_{0,n}] = \emptyset\right].
\]
By the results of \cite[Section~4.1]{M09}, if $X$ is a random walk from $y$ conditioned to hit $0$ before $\partial B(0,n)$ and killed upon hitting $0$, 
$Y$ is an infinite random walk from $y$, and the laws of their loop erasures until the first exit times from $B(y,\frac14 R_x)$ are $\P_X$ and $\P_Y$, 
then the Radon-Nikodym derivative $\frac{d\P_X}{d\P_Y}$ is bounded from above and below by universal positive and finite constants $C_1$ and $C_2$. 
Therefore, the probability in the above display is bounded from above by 
\[
C_2\cdot \P^{1,y}\otimes \P^{2,y} \left[
\loe(R^1[0,\infty))[s,t]\cap R^2[0,T^2_{0,n}] = \emptyset\right]\cdot \P^{1,y}\left[T^1_{0,n}>T^1_0\right],
\]
which is at most 
\[
C_2\cdot \es(2\varepsilon n,\frac14 R_x)\cdot \P^{1,y}\left[T^1_{0,n}>T^1_0\right]
\leq C_\delta\cdot \left(\frac{\varepsilon n}{R_x}\right)^{\alpha - \delta} \cdot \frac{d_x}{|x|\,n}
\leq C_\delta \cdot \frac{1}{\varepsilon n}\cdot \left(\frac{\varepsilon n}{|x|}\right)^{1+\alpha - \delta},
\]
where the last inequality again follows by considering cases $R_x=|x|$ and $R_x=d_x$ and using $\alpha\leq 1$. 
Finally, by summing over $y$ on the interior boundary of $B$, we get the bound
\[
C\cdot (\varepsilon n)^2\cdot \frac{1}{\varepsilon n}\cdot C_\delta\cdot \frac{1}{\varepsilon n}\cdot \left(\frac{\varepsilon n}{|x|}\right)^{1+\alpha - \delta},
\]
giving \eqref{eq:ball:proba}. The proof of Lemma~\ref{l:ball:proba} is complete. 
\end{proof}
\begin{remark}
Essentially the same proof gives the complementary lower bound to \eqref{eq:ball:proba} for $x$'s away from the boundary of $B(0,n)$.
For any $\delta>0$ there exists $c_\delta>0$ such that for all $\varepsilon>0$, $n\geq 1$ and $x\in B(0,\frac12 n)\setminus \overline{B(0,2\varepsilon n)}$, 
\[
\P^{1,0}\left[\loe(R^1[0,T^1_{0,n}])\cap B(x,\varepsilon n)\neq \emptyset\right]\geq c_\delta\cdot \left(\frac{\varepsilon n}{|x|}\right)^{1+\alpha + \delta}.
\]
\end{remark}

\subsection{Proof of Theorem~\ref{thm:hd}: upper bound}

In this section we use Lemma~\ref{l:ball:proba} to prove that $\hd(\koz)\leq 2 - \alpha$ almost surely. 
Recall that the Hausdorff dimension of a subset $S$ of $\R^d$ is defined as 
\[
\hd(S) = \inf\{\delta~:~\mathcal H^\delta(S) = 0\},
\]
where $\mathcal H^\delta(S)$ is the $\delta$-Hausdorff measure of $S$, 
$\mathcal H^\delta(S) = \lim_{\varepsilon\to 0}\inf\sum_{j=1}^\infty \mathrm{diam}(S_j)^\delta$, 
and the infimum is taken over all countable collections of sets $\{S_j\}$ covering $S$ with $\mathrm{diam}(S_j)\leq \varepsilon$.

Consider the coupling of $\koz$ and $\lew_n$ such that $d_H(\lew_n,\koz)\to 0$ as $n\to\infty$ almost surely. 
Let $N_1(\varepsilon)$ be the number of balls of radius $\frac12\varepsilon$ centered in $\frac12\varepsilon\Z^3\cap \left(D_{1-2\varepsilon}\setminus D_{2\varepsilon}\right)$
that have non-empty intersection with $\koz$, and  
$N_2(\varepsilon)$ the number of balls of radius $\frac12\varepsilon$ centered in $\frac12\varepsilon\Z^3\cap \left(D_{2\varepsilon}\cup D_{1-2\varepsilon}^c\right)$
that have non-empty intersection with $\koz$. 
Similarly, define $N_{1,n}(\varepsilon)$ as the number of balls of radius $\varepsilon n$ centered in $\frac12\varepsilon n\Z^3\cap B(0,n-2\varepsilon n)\setminus\overline{B(0,2\varepsilon n)}$ 
that have non-empty intersection with $\loe(R[0,T_{0,n}])$, 
and $N_{2,n}(\varepsilon)$ as the corresponding number of balls centered in $\frac12\varepsilon n\Z^3\cap \left(B(0,2\varepsilon n)\cup B(0,n-2\varepsilon n)^c\right)$. 
Then, for any positive $\delta$ and $\xi$, 
\[
\P\left[N_1(\varepsilon)\geq \delta\,\varepsilon^{\alpha-2-\xi}\right] \leq \P\left[d_H(\lew_n,\koz)\geq\frac12\varepsilon\right] + 
\P\left[N_{1,n}(\varepsilon) \geq \delta\,\varepsilon^{\alpha-2-\xi}\right].
\]
By Lemma~\ref{l:ball:proba}, 
\[
\P\left[N_{1,n}(\varepsilon) \geq \delta\,\varepsilon^{\alpha-2-\xi}\right]
\leq \frac1\delta\,\varepsilon^{2-\alpha+\xi}\,\E\left[N_{1,n}(\varepsilon)\right]\leq \frac1\delta\, C_\xi\,\varepsilon^{\frac12\xi}.
\]
By sending $n$ to infinity, we obtain that $\P\left[N_1(\varepsilon)\geq \delta\,\varepsilon^{\alpha-2-\xi}\right]\leq \frac1\delta\, C_\xi\,\varepsilon^{\frac12\xi}$. 
To obtain a bound for $N_2(\varepsilon)$, we proceed as above, but use Proposition~\ref{prop:bql} instead of Lemma~\ref{l:ball:proba}. 
We get $\P\left[N_2(\varepsilon)\geq \delta\,\varepsilon^{-\frac12}\right]\leq C_\delta\,\varepsilon^{\frac12\xi}$, 
if $\xi$ is sufficiently small. Since $\alpha\leq 1$, this implies that $\P\left[N_2(\varepsilon)\geq \delta\,\varepsilon^{\alpha-2-\xi}\right]\leq C_\delta\,\varepsilon^{\frac12\xi}$.

\medskip

For $\gamma\geq 0$, let $\mathcal H^\gamma_\varepsilon(\koz) = \inf\sum_{j=1}^\infty \mathrm{diam}(S_j)^\gamma$, 
where the infimum is taken over all coverings of $\koz$ by sets $S_j$ with diameter at most $\varepsilon$. 
Then, $\mathcal H^\gamma_\varepsilon(\koz)\leq \varepsilon^\gamma\,\left(N_1(\varepsilon)+N_2(\varepsilon)\right)$, and we obtain from the above estimates that 
\[
\P\left[\mathcal H^{2-\alpha+\xi}_\varepsilon(\koz)\geq 2\delta\right]\leq C_{\xi,\delta}\,\varepsilon^{\frac12\xi}.
\]
Note that if $\varepsilon\searrow 0$ then $\mathcal H^{2-\alpha+\xi}_\varepsilon(\koz)\nearrow\mathcal H^{2-\alpha+\xi}(\koz)$. 
Thus, for all $\delta>0$, $\P[\mathcal H^{2-\alpha+\xi}(\koz)\geq 2\delta]=0$, i.e., $\mathcal H^{2-\alpha+\xi}(\koz) = 0$ almost surely. 
Since $\xi>0$ is arbitrary, we get $\hd(\koz)\leq 2-\alpha$. 
\qed

\subsection{Proof of Theorem~\ref{thm:hd}: lower bound}

Let $\bm$ be the Brownian motion in $\R^3$ and $\tau = \inf\{t\geq 0:|B(t)|=1\}$ the first exit time of $\bm$ from $D$. 
The set of cut points $\mathrm{C}$ of $\bm[0,\tau]$ is defined as  
\[
\mathrm{C} = \{\bm(t)~:~0\leq t\leq\tau,~\bm[0,t]\cap\bm(t,\tau]=\emptyset\}.
\]
It is proved in \cite{L95} that $\hd(\mathrm{C}) = 2 - \xi$ almost surely, where $\xi$ is the non-intersection exponent for $3$ dimensional Brownian motion 
satisfying $\xi\in(\frac12,1)$.
Note that every path in $\bm[0,\tau]$ from $\bm(0)=0$ to $\bm(\tau)\in\partial D$ goes through all the cut points. 
We denote by $\mathcal{S}(U)$ the set of all points of $U\subseteq \overline D$ which disconnect $0$ from $\partial D$ in $U$. 
As noticed above, $\mathcal{S}(\bm[0,\tau])\supseteq\mathrm{C}$, thus, $\hd(\mathcal{S}(\bm[0,\tau])) \geq 2 - \xi$ almost surely. 

Now, recall from Theorem~\ref{thm:K+BLS=BM} that $\bm[0,\tau]$ has the same distribution as the union of 
the independent scaling limit of the loop erased random walk, $\koz$, and all the loops from the Brownian loop soup of intensity $1$ that 
are contained in $D$ and intersect $\koz$. Denote this union by $X$. 
Then $\mathcal{S}(X)$ has the same distribution as $\mathcal S(\bm[0,\tau])$ and, since $\koz$ connects $0$ and $\partial D$, $\mathcal S(X)\subseteq\koz$. 
Thus, $\hd(\koz) \geq 2 - \xi$ almost surely.
\qed

\paragraph{Acknowledgements.} Enormous thanks go to Alain-Sol Sznitman for his helpful discussions, encouragements, and fruitful comments. 
We also thank Yinshan Chang and Wendelin Werner for useful discussions and suggestions, 
and Yinshan Chang for a careful reading of the paper. 
This project was carried out while the second author was enjoying the hospitality of the 
Forschungsinstitut f\"ur Mathematik of the ETH Z\"urich and the Max Planck Institute for Mathematics in the Sciences. 
He wishes to thank these institutions. The reseach of the second author has been supported by the Japan Society for the Promotion of Science (JSPS). 
Finally, the second author thanks Hidemi Aihara for all her understanding and support.


\begin{thebibliography}{99}

\bibitem{BS03}
K. Ball and J. Sterbenz. Explicit bounds for the return probability of simple random walks. 
{\it Journal of Theoretical Probability} {\bf 18(2)}, 317--326. (2005)

\bibitem{B} 
V. Beffara. The dimension of the SLE curves. 
{\it Ann. Probab.} {\bf 36(4)}, 1421--1452. (2008)

\bibitem{Billingsley}
P. Billingsley. Convergence of probability measures. Wiley Series in Probability and Statistics. (1999) 

\bibitem{Cam15}
F. Camia. Brownian loops and conformal fields. see http://arxiv.org/abs/1501.04861

\bibitem{DEK} 
A. Dvoretzky, P. Erd\H{o}s and S. Kakutani. Double points of paths of Brownian motion in n-space. 
{\it Acta Sci. Math. Szeged} {\bf 12}, 75--81. (1950)

\bibitem{Fri}
B. Fristedt. An extension of a theorem of S. J. Taylor concerning the multiple points of the symmetric stable process. 
{\it Z. Wahrscheinlichkeitstheorie und Verw. Gebiete} {\bf 9}, 62--64. (1967)

\bibitem{GB} 
A. J. Guttmann and R. J. Bursill. Critical exponents for the loop erased self-avoiding walk by Monte Carlo methods. 
{\it Journal of Statistical Physics} {\bf 59(1)}, 1--9. (1990)

\bibitem{Kak} 
S. Kakutani. On Brownian motions in n-space. 
{\it Proc. Imp. Acad. Tokyo} {\bf 20}, 648--652 (1944)

\bibitem{Ken} 
R. Kenyon. The asymptotic determinant of the discrete Laplacian. 
{\it Acta Mathematica} {\bf 185(2)}, 239--286 (2000)

\bibitem{K87}
H. Kesten. Hitting probabilities of random walks on $\mathbb{Z}^{d}$.
{\it Stochastic Processes and their Applications} {\bf 25}, 165--184 (1987)

\bibitem{K}
G. Kozma. The scaling limit of loop-erased random walk in three dimensions.
{\it Acta Mathematica} {\bf 199(1)}, 29--152 (2007)

\bibitem{L}
G. F. Lawler. Intersections of random walks. Birkhauser, Boston, (1991)

\bibitem{Law0} 
G. F. Lawler. A self avoiding walk.
{\it Duke Math. J.} {\bf 47}, 655--694  (1980) 

\bibitem{L95}
G. F. Lawler. Hausdorff dimension of cut points for Brownian motion. 
{\it El. Journal Probab.} {\bf 1}, 1--20 (1996)

\bibitem{Law99} 
G. F. Lawler. Loop-erased random walk, in: Perplexing problems in probability. Progress in Probability {\bf 44}, Birkhauser Boston, 197--217 (1999)

\bibitem{Law-2} 
G. F. Lawler. The probability that planar loop-erased random walk uses a given edge.
{\it El. Journal Probab.} {\bf 19}, 1--13 (2014)

\bibitem{LL10}
G. F. Lawler and V. Limic. Random walk: a modern introduction. Cambridge Studies in Advanced Mathematics. (2010)

\bibitem{LSW} 
G. F. Lawler, O. Schramm and W. Werner. Conformal invariance of planar loop-erased random walks and uniform spanning trees.
{\it Ann. Probab.} {\bf 32(1B)}, 939--995 (2004) 

\bibitem{LSW2} 
G. F. Lawler, O. Schramm and W. Werner. Conformal restriction: the chordal case.
{\it Journal of the American Math. Soc.} {\bf 16(4)}, 917-955 (2003)

\bibitem{LT04}
G. F. Lawler and J. Trujillo Ferreras. Random walk loop soup.
{\it Transactions of the American Mathematical Society} {\bf 359(2)}, 767--787 (2007) 

\bibitem{LW04}
G. F. Lawler and W. Werner. The Brownian loop soup.
{\it Probab. Theory Related Fields} {\bf 128(4)}, 565--588 (2004)

\bibitem{LeJ08}
Y. Le Jan. \emph{Markov paths, loops and fields}, Lecture Notes in Mathematics,
  vol. 2026, Springer, Heidelberg, 2011, Lectures from the 38th Probability
  Summer School held in Saint-Flour, 2008.  

\bibitem{Lev} 
L\'evy. Le mouvement brownien plan. 
{\it Amer. J. Math.} {\bf 62}, 487--550 (1940)

\bibitem{M09}
R. Masson. The growth exponent for planar loop-erased random walk.
{\it El. Journal Probab.} {\bf 14}, 1012--1073 (2009)

\bibitem{Pem} 
R. Pemantle. Choosing a spanning tree for the integer lattice uniformly.
{\it Ann. Probab.} {\bf 19(4)}, 1559--1574 (1991)

\bibitem{Sch}
O. Schramm. Scaling limits of loop-erased random walks and uniform spanning trees.
{\it Israel Journal of Mathematics} {\bf 118(1)}, 221--288 (2000)

\bibitem{Shi13}
D. Shiraishi. Growth exponent for loop-erased random walk in three dimensions. see http://arxiv.org/abs/1310.1682

\bibitem{Shi:wip}
D. Shiraishi. Hausdorff dimension of the scaling limit of loop-erased random walk in three dimensions. In preparation.

\bibitem{Sla} 
G. Slade. Self-avoiding walks. 
{\it Math. Intelligencer} {\bf 16(1)}, 29--35 (1994)

\bibitem{Sym}
K.~Symanzik, \emph{Euclidean quantum field theory}, Scuola internazionale di
  Fisica "Enrico Fermi" \textbf{XLV} 152--223. (1969)

\bibitem{Szn12}
A.-S. Sznitman. \emph{Topics in occupation times and {G}aussian free fields}, Z\"urich
  Lectures in Advanced Mathematics, European Mathematical Society, (2012) 
  
\bibitem{Tay}
S. J. Taylor. Multiple points for the sample paths of the symmetric stable process.
{\it Zeitschr. Wahrsch. verw. Gebiete} {\bf 5}, 247--264 (1966)

\bibitem{Wil} 
D. B. Wilson. Generating random spanning trees more quickly than the cover time, 
Proceedings of the Twenty-eighth Annual ACM Symposium on the Theory of Computing (Philadelphia, PA, 1996), 296--303, ACM, New York, (1996)

\bibitem{Wil2} 
D. B. Wilson. The dimension of loop-erased random walk in 3D. 
{\it Physical Review E}  {\bf 82(6)}, 062102,  (2010)

\bibitem{Zhan}
D. Zhan. Loop-Erasure of Plane Brownian Motion.
{\it Communications in Mathematical Physics} {\bf 303(3)}, 709--720 (2011)

\end{thebibliography}
\end{document}